\def\sqr#1#2{{\vcenter{\vbox{\hrule height.#2pt
        \hbox{\vrule width.#2pt height#1pt \kern#2pt
        \vrule width.#2pt}
        \hrule height.#2pt}}}}
\newcommand{\nc}{\newcommand}
\nc{\parent}[1]{$[\![#1]\!]$}
\newtheorem{theorem}{Theorem}[section]
\newtheorem{lemma}{Lemma}[section]
\newtheorem{corollary}{Corollary}[section]
\newtheorem{proposition}{Proposition}[section]
\newtheorem{remark}{Remark}[section]
\newtheorem{assumption}{Assumption}[section]
\newenvironment{proof}{{\sc Proof.}\hspace{3mm}}{\qed}
\newenvironment{pf-main}{{\sc Proof of Theorem
    \ref{mainresult}.}\hspace{3mm}}{\qed}
\nc{\cadlag}{c\`{a}dl\`{a}g } \nc{\ba}{\begin{array}}
\nc{\ea}{\end{array}} \nc{\be}{\begin{equation}}
\nc{\ee}{\end{equation}} \nc{\bea}{\begin{eqnarray}}
\nc{\eea}{\end{eqnarray}} \nc{\bean}{\begin{eqnarray*}}
\nc{\eean}{\end{eqnarray*}} \nc{\bu}{\bullet} \nc{\nn}{\nonumber}
\nc{\cA}{{\mathcal A}} \nc{\cB}{{\mathcal B}} \nc{\cC}{{\mathcal C}}
\nc{\cD}{{\mathcal D}} \nc{\cN}{{\mathcal N}}\nc{\bbD}{\mathbb{D}}\nc{\bbH}{\mathbb{H}}
\nc{\bbF}{\mathbb{F}}\nc{\bbG}{\mathbb{G}}\nc{\cG}{{\mathcal G}} \nc{\cF}{{\mathcal F}}
\nc{\cS}{{\mathcal S}} \nc{\cZ}{{\mathcal Z}} \nc{\cU}{{\mathcal U}} \nc{\cH}{{\mathcal H}}
\nc{\cK}{{\mathcal K}} \nc{\cL}{{\mathcal L}} \nc{\cM}{{\mathcal M}}
\nc{\cO}{{\mathcal O}} \nc{\cP}{{\mathcal P}} \nc{\bbE}{\mathbb{E}}
\nc{\bbEQ}{\mathbb{E}_{\mathbb{Q}}} \nc{\eps}{\varepsilon}
\nc{\bbEP}{\mathbb{E}_{\mathbb{P}}}\nc{\bbL}{\mathbb{L}}
\nc{\bbP}{\mathbb{P}} \nc{\bbQ}{\mathbb{Q}} \nc{\del}{\partial}
\nc{\Om}{\Omega} \nc{\om}{\omega} \nc{\bbR}{\mathbb{R}}
\nc{\bbC}{\mathbb{C}} \nc{\bfr}{\begin{flushright}}
\nc{\efr}{\end{flushright}} \nc{\dXt}{\Delta X_{t}} \nc{\dXs}{\Delta
X_{s}} \nc{\bs}{\blacksquare} \nc{\dX}{\Delta X} \nc{\dY}{\Delta Y}
\nc{\dnkx}{\left(X(T^{n}_{k})-X(T^{n}_{k-1})\right)}
\nc{\esssup}{\mathrm{ess}\mbox{ }\mathrm{sup}}
\nc{\essinf}{\mathrm{ess}\mbox{ } \mathrm{inf}}
\nc{\dhats}{\widehat{\delta_s}} \nc{\tX}{\tilde{X}}
\nc{\tZ}{\tilde{Z}}\nc{\half} {\frac{1}{2}}
\def\rar{\rightarrow}
\nc{\qed}{\hfill $\blacksquare$}
\nc{\chf}{\mbox{$\mathbf1$}}
\begin{document}
\title{Explicit construction of a dynamic Bessel bridge of dimension $3$ }
\author{Luciano Campi\footnote{LAGA, University Paris 13, and CREST, campi@math.univ-paris13.fr.}\qquad Umut \c Cetin\footnote{Department of Statistics, London School of Economics, u.cetin@lse.ac.uk.} \qquad Albina Danilova\footnote{Department of Mathematics, London School of Economics, a.danilova@lse.ac.uk.}}
\maketitle

\begin{abstract}
Given a deterministically time-changed Brownian motion $Z$ starting
from $1$, whose time-change $V(t)$ satisfies $V(t) > t$ for all $t > 0$, we perform an explicit construction of a process $X$ which is
Brownian motion in its own filtration and that hits zero for the first
time at $V(\tau)$, where $\tau := \inf\{t>0: Z_t =0\}$. We also
provide the semimartingale decomposition of $X$ under the filtration
jointly generated by $X$ and $Z$. Our construction relies on a
combination of enlargement of filtration and filtering techniques. The
resulting process $X$ may be viewed as the analogue of a
$3$-dimensional Bessel bridge starting from $1$ at time $0$ and ending
at $0$ at the random time $V(\tau)$.  We call this {\em a dynamic
  Bessel bridge} since $V(\tau)$ is not known in advance.
  Our study is motivated by insider trading models with default risk, where the insider observes the firm's value continuously on time. The financial application, which uses results proved in the present paper, has been developed in the companion paper \cite{ccdfs}.
\end{abstract}

\section{Introduction}

In this paper, we are interested in constructing a Brownian motion starting from $1$ at time $t=0$ and conditioned to hit the level $0$ for the first time at a given random time. More precisely, let $Z$ be the deterministically time-changed Brownian motion $Z_t = 1+\int_0 ^t \sigma (s) dW_s $
and let $B$ be another standard Brownian motion independent of $W$. We denote $V(t)$ the associated time-change, i.e. $V(t) = \int_0 ^t \sigma^2 (s) ds$ for $t\geq 0$. Consider the first hitting time of $Z$ of the level $0$, denoted by $\tau$. Our aim is to build explicitly a process $X$ of the form $dX_t = dB_t + \alpha _t dt$, $X_0 =1$, where $\alpha$ is an integrable and adapted process for the filtration jointly generated by the pair $(Z,B)$ and satisfying the following two properties:
\begin{enumerate}
\item $X$ hits  level $0$ for the first time at time $V(\tau)$;
\item $X$ is a Brownian motion in its own filtration.
\end{enumerate}
This resulting process $X$ can be viewed as an analogue of
$3$-dimensional Bessel bridge with a random terminal time. Indeed, the
two properties above characterising $X$ can be reformulated as
follows: $X$ is a Brownian motion conditioned to hit $0$ for the first
time at the random time $V(\tau)$. In order to emphasise the distinct
property that $V(\tau)$ is not known at time $0$, we call this process
a {\em dynamic Bessel bridge of dimension 3}. The reason that $X$ hits $0$ at $V(\tau)$ rather than $\tau$ is simply due to the relationship between the first hitting times of $0$ by $Z$ and a standard Brownian motion starting at $1$.

The solution to the above problem consists of two parts with varying difficulties. The easy part is the construction of this process after time $\tau$. Since $V$ is a deterministic function, the first hitting time of $0$ is revealed at time $\tau$. Thus, one can use the well-known relationship between the 3-dimensional Bessel bridge and Brownian motion conditioned on its first hitting time to write for $ t \in (\tau, V(\tau))$
\[
dX_t=dB_t+\left\{\frac{1}{X_t}-\frac{X_t}{V(\tau)-t}\right\}\,dt.
\]
The difficult part is the construction of $X$ until time $\tau$. Thus, the challenge is to construct a Brownian motion which is conditioned to stay strictly positive until time $\tau$ using a drift term adapted to the filtration generated by $B$ and $Z$.

Our study is motivated by the equilibrium model with insider trading and default as in \cite{CaCe}, where a Kyle-Back type model with default is considered. In such a model, three agents act in the market of a defaultable bond issued by a firm, whose value process is modelled  as a Brownian motion and whose default time is set to be the first time that the firm's value hits a given constant default barrier. It has been shown in \cite{CaCe} that the equilibrium total demand for such a bond, after an appropriate translation, is a process $X^*$ which is a  $3$-dimensional Bessel bridge in insider's (enlarged) filtration but is a Brownian motion in its own filtration. These two properties
can be rephrased as follows: $X^*$ is a Brownian motion conditioned to
hit $0$ for the first time at the default time
$\tau$. However, the assumption that the insider knows the default time from the beginning may seem too strong from the modelling viewpoint. To approach the reality, one might consider a more realistic situation when the insider doesn't know the default time but however she can observe the evolution through time of the firm's value. Equilibrium considerations, akin to the ones employed in \cite{CaCe}, lead one to study the existence of processes which we called dynamic Bessel bridges of dimension 3 at the beginning of this introduction. The financial application announced here has been performed in the companion paper \cite{ccdfs}, where the tools developed in the present paper are used to solve explicitely the equilibrium model with default risk and dynamic insider information, as outlined above. We refer to that paper for further details.

We will observe in the next section that in order to make such a construction possible, one has to assume that $Z$ evolves faster than its underlying Brownian motion $W$, i.e. $V(t) \geq t$ for all $t \geq 0$. It can be proved (see next Section \ref{model}) that $V(t)$ cannot be equal to $t$ in any interval $(a,b)$ of $\bbR_+$. We will nevertheless impose a  stronger assumption that $V(t)>t$ for all $t>0$ in order to avoid unnecessary technicalities. In the context of the financial market described above this assumptions amounts to insider's information being more precise than that of the market maker (see \cite{BP} for a discussion of this assumption). Moreover, an additional assumption on the behaviour of the time change $V(t)$ in a neighbourhood of $0$ will be needed.

Apart from the financial application, which is our first motivation, such a problem is interesting from a probabilistic point of view as well. We have observed above that the difficult part in obtaining the dynamic Bessel bridge is the construction of a Brownian motion which is conditioned to stay strictly positive until time $\tau$ using a drift term adapted to the filtration generated by $B$ and $Z$. Such a construction is related to the conditioning of a Markov process, which has been the topic of various works in the literature. The canonical example of this phenomenon is the 3-dimensional Bessel process which is obtained when one conditions a standard Brownian motion to stay positive. Chaumont \cite{ConL} studies the analogous problem for L\'evy process whereas Bertoin and Doney \cite{BerD} are concerned with the situation for random walks and the convergence of their respective probability laws. Bertoin et al. \cite{BPC} constructs a Brownian path over a fixed time interval with a {\em given} minimum by performing  transformations on a Brownian bridge. More recently, Chaumont and Doney \cite{CD} revisits the L\'evy processes conditioned to stay positive and shows a Williams' type path decomposition result at the minimum of such processes. However, none of these approaches can be adopted to perform the construction that we are after since i) the time interval in which we condition the Brownian motion to be positive is random and not known in advance; and ii) we are not allowed to use transformations that are not adapted to the filtration generated by $B$ and $Z$.

The paper is structured as follows. In Section \ref{model}, we
formulate precisely our main result (Theorem \ref{th:main}) and
provide a partial justification for its assumptions. Section \ref{proof} contains the proof of Theorem \ref{th:main}, that uses, in particular, a technical result on the density of the signal process $Z$, whose proof is given in Section \ref{Zdensity}. Finally, several technical results used along our proofs have been relegated in the Appendix for reader's convenience.

\section{Formulation of the main result}
\label{model}
Let
$(\Omega, \cH ,\bbH=(\cH_t)_{t \geq 0}, \bbP)$ be a filtered
probability space satisfying the usual
conditions. We suppose that $\cH_0$ contains only the $\bbP$-null sets and { there exist two independent $\bbH$-Brownian motions, $B$ and $W$.}  We introduce the process
\be \label{sdeZ}
Z_t:=1+\int_0^t \sigma(s) dW_s,
\ee
for some $\sigma$ whose properties are given in the assumption below.
\begin{assumption}\label{a:sigma} There exist  a measurable function
 $\sigma : \bbR_+ \mapsto (0,\infty)$ such
 that:\begin{enumerate}
\item $V(t) :=  \int_0 ^t \sigma^2 (s)ds \in (t, \infty)$ for every $t> 0$;
\item There exists some $\eps>0$ such that $\int_0^{\eps}
  \frac{1}{(V(t)-t)^2}\, dt <\infty.$
\end{enumerate}
\end{assumption}
{ Notice that under this assumptions, $Z$ and $W$ generate the same minimal filtration satisfying the usual conditions.} Consider the following first hitting time of $Z$:
\be \label{d:tau}
\tau:=\inf\{t>0: Z_t=0\},
\ee
where $\inf \emptyset=\infty$ by convention. One can characterize the distribution of $\tau$
using the well-known distributions of first hitting times of a
standard Brownian motion. To this end let
\be \label{d:H}
H(t,a):=\bbP\left[T_a>t\right]=\int_t^{\infty} \ell(u,a)\, du,
\ee
for $a>0$ where
\bean
T_a&:=&\inf\{t>0: B_t=a\}, \mbox{ and}\\
\ell(t,a)&:=&\frac{a}{\sqrt{2 \pi t^3}}\exp\left(-\frac{a^2}{2
    t}\right).
\eean
Recall that
\[
\bbP[T_a>t|\cH_s]=\chf_{[T_a>s]} H(t-s, a- B_s), \qquad s <t.
\]
{ Thus, since $V$ is deterministic and strictly increasing, $(Z_{V^{-1}(t)})_{t \geq 0}$ is a standard Brownian motion in its own filtration starting at $1$, and consequently
\be \label{dist_tau}
\bbP[\tau>t|\cH_s]=\chf_{[\tau>s]} H(V(t)-V(s), Z_s).
\ee
Hence,
\[
  \bbP[V(\tau)>t]=H(t,1),
\]
 for every $t \geq 0$, i.e. $V(\tau)=T_1$ in distribution.} Here we would like to give another formulation for the function $H$ in
terms of the transition density of a {\em Brownian motion killed at
  0}. Recall that this transition density is given by
\be \label{d:q}
q(t,x,y):=\frac{1}{\sqrt{2 \pi t}}\left(\exp\left(-\frac{(x-y)^2}{2
        t}\right)-\exp\left(-\frac{(x+y)^2}{2 t}\right)\right),
\ee
for $x>0$ and $y>0$ (see Exercise (1.15), Chapter III in \cite{ry}). Then one has the identity
\be \label{H:alt}
H(t,a)=\int_0^{\infty} q(t,a,y)\, dy.
\ee

In the sequel, for any process $Y$, $\cF^Y$ is going to denote the minimal filtration satisfying the usual conditions and with respect to which $Y$ is adapted. The following  is the main result of this paper.

\begin{theorem} \label{th:main} There exists a unique strong solution to
\be
\mbox{\phantom{bos}} X_t = 1+ B_t+ \int_0^{\tau \wedge
  t}\frac{q_x(V(s)-s,X_s,Z_s)}{q(V(s)-s,X_s,Z_s)}\,ds +\int_{\tau \wedge
  t}^{V(\tau) \wedge
  t} \frac{\ell_a(V(\tau)-s,X_s)}{\ell(V(\tau)-s,X_s)}\,ds. \label{sdeX}
\ee
Moreover,
\begin{itemize}
\item[i)] Let $\cF^X_t=\cN \bigvee \sigma(X_s; s \leq t)$, where $\cN$ is the set of $\bbP$-null sets. Then,  $X$ is a standard Brownian motion with respect to $\bbF^X:=(\cF^X_t)_{t \geq 0}$;
\item[ii)] $V(\tau)=\inf\{t>0:X_t=0\}$.
\end{itemize}
\end{theorem}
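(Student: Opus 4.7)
The theorem has three parts: strong existence and uniqueness for (\ref{sdeX}), property (ii), and property (i). I would split the time interval at $\tau$. On $[\tau,V(\tau)]$, since $V$ is deterministic, $V(\tau)$ is $\cH_\tau$-measurable and the equation reduces to the classical SDE for a $3$-dimensional Bessel bridge from $X_\tau>0$ at time $\tau$ to $0$ at time $V(\tau)$; standard results (cf.\ \cite{ry}) give strong existence and uniqueness, strict positivity on $[\tau,V(\tau))$, and $X_{V(\tau)}=0$. On $[0,\tau]$, the drift $q_x(V(s)-s,x,z)/q(V(s)-s,x,z)$ is locally Lipschitz on $(0,\infty)^2$ and, because $q(t,0,z)=0$ vanishes linearly in $x$ near $0$, behaves like $1/x$ as $x\downarrow 0$, providing Bessel-type repulsion from the origin; combined with Assumption \ref{a:sigma}(2), which controls the singular behaviour of the coefficient near $s=0$, standard localisation plus a comparison with a $3$-dimensional Bessel process yields a unique strong solution on $[0,\tau]$ with $X>0$. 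Property (ii) is then immediate: $X>0$ on $[0,V(\tau))$ and $X_{V(\tau)}=0$.

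For property (i), I rewrite (\ref{sdeX}) as $X_t=1+B_t+\int_0^t\alpha_s\,ds$, with $B$ an $\bbH$-Brownian motion and $\alpha$ the concatenated drift. By Stricker's theorem $X$ is an $\bbF^X$-semimartingale, and the Fujisaki--Kallianpur--Kunita innovations formula gives
\[
\tilde B_t \;:=\; X_t - 1 - \int_0^t \bbE[\alpha_s\mid\cF^X_s]\,ds
\]
as an $\bbF^X$-Brownian motion. Hence (i) is equivalent to showing $\bbE[\alpha_s\mid\cF^X_s]=0$ for a.e.\ $s$, almost surely. The natural route is a Bayes/Girsanov argument: under the reference measure $\bbQ\sim\bbP$ obtained by removing the drift $\alpha$, $X$ becomes a Brownian motion independent of $Z$. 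One then identifies the $\cF^X_s$-conditional law of the signal, splitting into $\{s<\tau\}$ (where the relevant signal is $Z_s$) and $\{s\geq\tau\}$ (where it is the already-revealed $V(\tau)$), and checks that the projection integral vanishes in each regime.

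The main technical obstacle is the pre-$\tau$ projection
\[
\bbE\!\left[\frac{q_x(V(s)-s,X_s,Z_s)}{q(V(s)-s,X_s,Z_s)}\,\Big|\,\cF^X_s\right]=0 \quad\text{on }\{s<\tau\}.
\]
By Bayes' rule under $\bbQ$, the $\cF^X_s$-conditional density $\pi_s(z)$ of $Z_s$ on $\{\tau>s\}$ is proportional to the $\bbQ$-density of $Z_s$ on $\{\tau>s\}$ (the object computed in Section \ref{Zdensity}) weighted by the $\bbQ$-conditional factor $\bbEQ[L_s\mid X_s,Z_s]$, with $L_s=(d\bbP/d\bbQ)|_{\cH_s}$. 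One then verifies $\int(q_x/q)\,\pi_s\,dz=0$ by combining the symmetry of the killed-Brownian kernel $q$, Chapman--Kolmogorov-type identities, and the explicit density formulae of Section \ref{Zdensity}. The post-$\tau$ projection, by contrast, follows from the Bessel-bridge structure on $[\tau,V(\tau)]$: conditional on $\cF^X_s$ with $s\in(\tau,V(\tau))$, $V(\tau)-s$ has the first-hitting-time density $\ell(\,\cdot\,,X_s)$, whence
\[
\int_0^\infty \frac{\ell_a(u,X_s)}{\ell(u,X_s)}\,\ell(u,X_s)\,du = \partial_x\!\!\int_0^\infty\!\ell(u,x)\,du\,\big|_{x=X_s} = 0,
\]
using $\int_0^\infty \ell(u,x)\,du=1$. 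Assumption \ref{a:sigma}(2) enters crucially through the density estimates of Section \ref{Zdensity} that make the pre-$\tau$ projection rigorous.
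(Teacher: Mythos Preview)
Your treatment of existence/uniqueness and property (ii) is broadly in line with the paper's, though the paper works harder than ``local Lipschitz plus Bessel repulsion'': it first proves pathwise uniqueness from monotonicity of $q_x/q$ in $x$, then constructs a weak solution by passing through an auxiliary squared process and a Girsanov change of measure (this is where Assumption~\ref{a:sigma}(2) is actually used, to verify Novikov's condition piecewise).

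The substantive gap is in your argument for (i). Your plan is to show $\bbE[\alpha_s\mid\cF^X_s]=0$ by splitting into $\{s<\tau\}$ and $\{\tau\le s<V(\tau)\}$. But $\tau$ is \emph{not} an $\bbF^X$-stopping time---only $V(\tau)$ is, as the first zero of $X$---so the event $\{s<\tau\}$ is not $\cF^X_s$-measurable and you cannot separate the projection in the way you describe. At time $s<V(\tau)$ an observer of $X$ alone does not know whether the drift currently acting is the $q_x/q$ term or the $\ell_a/\ell$ term; the filtering problem for $\alpha_s$ thus involves the joint $\cF^X_s$-conditional law of $(\chf_{[\tau>s]},Z_s,V(\tau))$, which is precisely the ``highly non-standard filtering problem'' the paper explicitly avoids. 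Your post-$\tau$ computation is also circular: asserting that $V(\tau)-s$ has conditional density $\ell(\cdot,X_s)$ given $\cF^X_s$ is equivalent to $X$ being an $\bbF^X$-Brownian motion on $\{s<V(\tau)\}$, which is what you are trying to prove.

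The paper's route is genuinely different. It introduces the intermediate filtration $\bbG$ generated by $X$ together with $\tau$ as a stopping time; there the regimes \emph{are} separated, and filtering $Z$ (Proposition~\ref{c_density}) gives the $\bbG$-drift $H_x/H$ before $\tau$. It then performs an initial enlargement of $\bbG$ by $\tau$ and shows that the $\bbG^\tau$-semimartingale decomposition of $X$ is exactly that of a $3$-dimensional Bessel bridge of length $V(\tau)$. Since $V(\tau)\stackrel{d}{=}T_1$, the pair $(X,V(\tau))$ has the same law as a Brownian motion together with its first zero, and property (i) follows. This enlargement-of-filtration detour is the key idea you are missing; without it, the direct $\bbF^X$-projection you propose does not go through.
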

The proof of this result is postponed to the next section. We conclude
this section by providing a justification for our assumption $V(t)>t$
for all $t >0$.

First, observe that we necessarily have $V(t)\geq t$ for any $t \geq 0$. This follows from the fact that if the construction in Theorem \ref{th:main} is possible, then $V(\tau)$ is an $\cF^{B,Z}$-stopping time since it is an exit time from the positive real line of the process $X$. Indeed, if $V(t)< t$ for some $t >0$ so that $V^{-1}(t)>t$, then $[V(\tau)<t]$ cannot belong to $\cF^{B,Z}_t$ since $[V(\tau)<t]\cap[\tau>t]=[\tau<V^{-1}(t)]\cap[\tau>t] \notin\cF^Z_t$, and that $\tau$ is not $\cF^B_{\infty}$-measurable.

{ We will next see that  when $V(t) \equiv t$ construction of a dynamic Bessel bridge is
not possible. Similar arguments will also show that $V(t)$ cannot be
equal to $t$ in an interval.} We are going to adapt to our setting the arguments used in \cite{fwy}, Proposition 5.1.

{ To this end consider any process $X_t = 1 + B_t + \int_0^t \alpha_s ds$ for some $\mathbb H$-adapted and integrable process $\alpha$. Assume that $X$ is a Brownian motion in its own filtration an that $\tau = \inf\{t: X_t =0\}$ a.s. and fix an arbitrary time $t \geq 0$.} The two processes $M^Z_s := \mathbb P [ \tau > t | \cF^Z _s]$ and $M_s ^X := \mathbb P[\tau >t | \cF^X _s]$, for $s\geq 0$, are uniformly integrable continuous martingales, the former for the filtration $\mathcal F^{Z,B}$ and the latter for the filtration $\mathcal F^X$.  In this case, Doob's optional sampling theorem can be applied to any pair of finite stopping times, e.g. $\tau \wedge s $ and $\tau$, to get the following:
\begin{eqnarray*} M^X _{\tau \wedge s} &=& \bbE[M^X _\tau |\mathcal F^X _{\tau \wedge s}] = \bbE[\mathbf 1_{\tau >t} |\mathcal F^X _{\tau \wedge s}] \\
&=& \bbE[M^Z _\tau |\mathcal F^X _{\tau \wedge s}] = \bbE[M^Z _{\tau \wedge s}|\mathcal F^X _{\tau \wedge s}],\end{eqnarray*}
where the last equality is just an application of the tower property of conditional expectations and the fact that $M^Z$ is martingale for the filtration $\mathcal F^{Z,B}$ which is bigger than $\cF^X$.
We also obtain
\[ \bbE[(M_{\tau \wedge s}^X - M_{\tau \wedge s}^Z)^2 ] =\bbE[(M^X _{\tau \wedge s})^2] + \bbE[(M^Z _{\tau \wedge s})^2] - 2 \bbE[M^X _{\tau \wedge s} M^Z _{\tau \wedge s}] .\]
{ Notice that, since the pairs $(X,\tau)$ and $(Z,\tau)$ have the same law by assumption, the random variables $M^X _{\tau \wedge s}$ and  $M^Z_{\tau \wedge s}$ have the same law too. This implies}
\[ \bbE[(M_{\tau \wedge s}^X - M_{\tau \wedge s}^Z)^2 ] =2\bbE[(M^X _{\tau \wedge s})^2] - 2 \bbE[M^X _{\tau \wedge s} M^Z _{\tau \wedge s}] .\]
On the other hand we can obtain
\[ \bbE[M^X _{\tau \wedge s} M^Z _{\tau \wedge s}] = \bbE[M^X _{\tau \wedge s} \bbE[ M^Z _{\tau \wedge s}|\mathcal F_{\tau \wedge s} ^X]] = \bbE[(M^X _{\tau \wedge s})^2],\]
which implies that $M^X _{\tau \wedge s} = M^Z _{\tau \wedge s}$ for all $s\geq 0$.
Using the fact that
\[ M_s ^Z = \mathbf 1_{\tau >s} H(t-s, Z_s),\quad M_s ^X = \mathbf 1_{\tau >s} H(t-s, X_s),\quad s<t,\] one has
\[ H(t-s, X_s ) = H(t-s,Z_s)\quad  \textrm{on } [\tau >s].\]
Since the function $a \mapsto H(u,a)$ is strictly monotone in $a$ whenever $u>0$, the last equality above implies that $X_s = Z_s$ for all $s < t$ on the set $[\tau >s]$. $t$ being arbitrary, we have that that $X^\tau _s = Z^\tau _s $ for all $s\geq 0$.

We have just proved that, before $\tau$, $X$ and $Z$ coincide, which contradicts the fact that $B$ and $Z$ are independent, so that the construction of a Brownian motion conditioned to hit $0$ for the first time at $\tau$ is impossible. A possible way out is to assume that the signal process $Z$ evolves faster than its underlying Brownian motion $W$, i.e. $V(t) \in (t,\infty)$ for all $t\geq 0$ as in our assumptions on $\sigma$. We prove our main result in the following section.

\section{Proof of the main result} \label{proof}

Note first that in order to show the existence and the uniqueness of the strong solution to the SDE
in (\ref{sdeX}) it suffices to show these properties for the following
SDE
\be \label{aux_sde}
Y_t = y+ B_t + \int_0^{\tau \wedge
  t}\frac{q_x(V(s)-s,Y_s,Z_s)}{q(V(s)-s,Y_s,Z_s)}\,ds, \qquad y > 0,
\ee
and that $Y_{\tau} > 0$. Indeed, the drift term after $\tau$ is the same as that of a 3-dimensional Bessel bridge
from $X_{\tau}$ to $0$ over the interval $[\tau, V(\tau)]$. { Note that $V(\tau) = T_1$ in distribution implies that $\tau$ has the same law as $V^{-1}(T_1)$ which is finite since $T_1$ is finite and the function $V(t)$ is increasing to infinity as $t$ tends to infinity. Thus $\tau$ is a.s. finite.

By Corollary 5.3.23 in \cite{ks} the existence and  uniqueness of the strong solution of (\ref{aux_sde}) is equivalent to the existence of a weak solution and pathwise uniqueness of strong solution when the latter exists. More precisely, after proving pathwise uniqueness for the SDE (\ref{aux_sde}), and thus establishing the uniqueness of the system of  (\ref{sdeZ}) and (\ref{aux_sde}),  in Lemma \ref{uniqueness}, we will construct a weak solution, $(Y,Z)$ , to this system. The weak existence and pathwise uniqueness will then imply $(Y,Z)= h(1,y, \beta, W) $  for some measurable $h$ and some Brownian motion $\beta$ in view of Corollary 5.3.23 in \cite{ks}. Moreover, the second part of  Corollary 5.3.23 in \cite{ks} will finally give  us   $h(1, y, B, W)$ as the strong solution of the system described by (\ref{sdeZ}) and (\ref{aux_sde}).

In the sequel we will often work with a pair of  SDEs defining $(A,Z)$ where $A$ is a semimartingale given by an SDE whose drift coefficient depends on  $Z$.   In order to simplify the statements of the following results, we will  shortly  write existence and/or uniqueness of the  SDE for $A$, when we actually mean the corresponding property for the whole system.}



We start with demonstrating the pathwise uniqueness property.
\begin{lemma} \label{uniqueness} Pathwise uniqueness holds for the SDE in
  (\ref{aux_sde}).
\end{lemma}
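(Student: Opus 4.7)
The plan is to exploit the fact that if $Y^1,Y^2$ are two strong solutions of (\ref{aux_sde}) driven by the same pair $(B,Z)$ and sharing the same initial datum $y>0$, then their difference $D_t:=Y^1_t-Y^2_t$ has no martingale part. Writing
\[
b(s,y,z):=\frac{q_x(V(s)-s,y,z)}{q(V(s)-s,y,z)}=\partial_y\log q(V(s)-s,y,z),
\]
one has $D_0=0$ and $D_t=\int_0^{\tau\wedge t}[b(s,Y^1_s,Z_s)-b(s,Y^2_s,Z_s)]\,ds$, an absolutely continuous path. The problem is therefore reduced to showing that $y\mapsto b(s,y,z)$ is strictly decreasing on $(0,\infty)$ for every $s,z>0$: once this is known, the chain rule for absolutely continuous functions yields
\[
D_t^+=\int_0^{\tau\wedge t}\mathbf 1_{\{D_s>0\}}\bigl[b(s,Y^1_s,Z_s)-b(s,Y^2_s,Z_s)\bigr]\,ds\le 0,
\]
forcing $D^+\equiv 0$; by the symmetric argument $D^-\equiv 0$, so $Y^1\equiv Y^2$ on $[0,\tau]$.

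The required monotonicity follows from a direct computation on the explicit form of $q$. Rewriting the killed heat kernel as
\[
q(t,x,z)=\frac{2}{\sqrt{2\pi t}}\,e^{-(x^2+z^2)/(2t)}\sinh(xz/t),\qquad x,z,t>0,
\]
one obtains
\[
\partial_x\log q(t,x,z)=-\frac{x}{t}+\frac{z}{t}\coth\!\left(\frac{xz}{t}\right),\qquad \partial_x^2\log q(t,x,z)=-\frac{1}{t}-\frac{z^2}{t^{2}\sinh^{2}(xz/t)}<0.
\]
Hence $\log q(t,\cdot,z)$ is strictly concave on $(0,\infty)$, and consequently $b(s,\cdot,Z_s)$ is strictly decreasing on $(0,\infty)$ for every $s\in(0,\tau)$, thanks to $V(s)-s>0$ (Assumption~\ref{a:sigma}(i)) and to $Z_s>0$ before $\tau$.

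The main technical obstacle I foresee is justifying the above manipulations in the face of the two singularities of $b$: the blow-up of the factor $1/(V(s)-s)$ as $s\downarrow 0$, and the behaviour $b(s,y,z)\sim 1/y$ as $y\downarrow 0$. The first is tamed by Assumption~\ref{a:sigma}(ii). The second I would handle by localising along the stopping times $\sigma_n:=\inf\{s>1/n:Y^1_s\wedge Y^2_s\le 1/n\}\wedge\tau$, running the monotonicity argument on $[1/n,\sigma_n]$ where $b(\cdot,Y^i,Z)$ is bounded, and then passing to the limit $n\to\infty$, using the fact that any solution of (\ref{aux_sde}) is implicitly required to keep the drift integrable on $[0,\tau]$, which in turn forces $Y^i>0$ strictly before $\tau$ and hence $\sigma_n\uparrow\tau$ a.s.
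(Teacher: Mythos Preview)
Your approach is correct and essentially identical to the paper's: both compute $\partial_x\log q(t,x,z)$, observe it is strictly decreasing in $x$, and use this monotonicity to show the difference of two solutions cannot grow. The paper streamlines the final step by applying the chain rule to $(Y^1-Y^2)^2$ directly rather than to $(Y^1-Y^2)^{\pm}$ separately, and dispenses with your localisation, which is in fact unnecessary --- once $Y^1,Y^2$ are assumed to be solutions the drift integrals are finite by definition, so the one-line inequality $(Y^1_{t\wedge\tau}-Y^2_{t\wedge\tau})^2=2\int_0^{\tau\wedge t}(Y^1_s-Y^2_s)\bigl[b(s,Y^1_s,Z_s)-b(s,Y^2_s,Z_s)\bigr]\,ds\le 0$ already suffices.
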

\begin{proof}
It follows from direct calculations that
\be \label{qx/q}
\frac{q_x(t,x,z)}{q(t,x,z)}=\frac{z-x}{t} +
\frac{\exp\left(-\frac{2xz}{t}\right)}{1-\exp\left(-\frac{2xz}{t}\right)}\frac{2z}{t}.
\ee
Moreover, $\frac{q_x(t,x,z)}{q(t,x,z)}$ is decreasing in $x$ for fixed
$z$ and $t$. Now, suppose there exist two strong solutions, $Y^1$ and $Y^2$. Then
\[
(Y^1_{t \wedge \tau}-Y^2_{t \wedge \tau})^2 =2 \int_0^{\tau \wedge
  t}(Y^1_s-Y^2_s)\left\{\frac{q_x(V(s)-s,Y^1_s,Z_s)}{q(V(s)-s,Y^1_s,Z_s)}-\frac{q_x(V(s)-s,Y^2_s,Z_s)}{q(V(s)-s,Y^2_s,Z_s)}\right\}\,
  ds \leq 0.
\]
\end{proof}

The
existence of a weak solution will  be obtained in several steps. First
we show the existence of a weak solution to the SDE in the following
proposition and then conclude via Girsanov's theorem.
\begin{proposition} \label{p:sde2} There exists a unique strong solution to
\be \label{aux_sde2}
Y_t = y+B_t + \int_0^{\tau \wedge t} f(V(s)-s, Y_s, Z_s)\, ds \qquad y > 0,
\ee
where
\[
f(t,x,z):=\frac{\exp\left(-\frac{2xz}{t}\right)}{1-\exp\left(-\frac{2xz}{t}\right)}\frac{2z}{t}.
\]
Moreover, $\bbP[Y_{\tau}>0 \mbox{ and } Y_{t \wedge \tau} >0, \forall t>0]=1.$
\end{proposition}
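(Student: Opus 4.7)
The plan is to establish strong existence and uniqueness along the Yamada--Watanabe route: pathwise uniqueness (short), weak existence by a localised Girsanov construction, and finally non-attainment of $0$ by $Y$ on $[0,\tau]$ (the main work). A useful reformulation is
\[
f(t,x,z)=\frac{1}{x}\cdot\frac{u}{e^{u}-1},\qquad u=\frac{2xz}{t},
\]
which, together with the elementary estimate $0<u/(e^{u}-1)\le 1$, yields $f(t,x,z)\le 1/x$ and, since $u$ is increasing in $x$ while $u/(e^{u}-1)$ is decreasing in $u$, shows that $x\mapsto f(t,x,z)$ is strictly decreasing. Pathwise uniqueness then follows from the argument of Lemma \ref{uniqueness} applied verbatim: any two strong solutions $Y^{1},Y^{2}$ satisfy $(Y^{1}_{t\wedge\tau}-Y^{2}_{t\wedge\tau})^{2}\le 0$, and after $\tau$ both evolve as $Y_{\tau}+B-B_{\tau}$.

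For weak existence I would work on an auxiliary probability space supporting independent Brownian motions $\tilde B,\tilde W$, define $\tilde Z$ via (\ref{sdeZ}) and its hitting time $\tilde\tau$ of $0$, and start with $\tilde Y^{0}_{t}:=y+\tilde B_{t}$. For each $n\ge 1$ let $\eta_{n}:=\inf\{t:\tilde Y^{0}_{t}\le 1/n\}$ and introduce the exponential
\[
L^{n}_{t}=\exp\!\left(\int_{0}^{t\wedge\tilde\tau\wedge\eta_{n}}\! f(V(s)-s,\tilde Y^{0}_{s},\tilde Z_{s})\,d\tilde B_{s}-\tfrac{1}{2}\!\int_{0}^{t\wedge\tilde\tau\wedge\eta_{n}}\!f^{2}(V(s)-s,\tilde Y^{0}_{s},\tilde Z_{s})\,ds\right).
\]
The bound $|f|\le 1/x\le n$ on the interval of integration makes $L^{n}$ a uniformly bounded martingale. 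Let $\bbQ^{n}$ be the measure with density $L^{n}_{T}$ on $\cH_{T}$. Under $\bbQ^{n}$, Girsanov's theorem and the independence of $\tilde B,\tilde W$ (which forces $\langle \tilde B^{n},\tilde W\rangle\equiv 0$ for the shifted Brownian motion $\tilde B^{n}$, so $\tilde W$ remains a $\bbQ^{n}$-Brownian motion independent of $\tilde B^{n}$) ensure that $(\tilde Y^{0},\tilde Z)$ is a weak solution of the system on $[0,\tilde\tau\wedge\eta_{n}]$. The family $(\bbQ^{n})$ is consistent on the increasing $\sigma$-fields $\cH_{\tilde\tau\wedge\eta_{n}}$ (stopping $L^{n}$ at $\tilde\tau\wedge\eta_{n-1}$ recovers $L^{n-1}$), so a standard Tulcea-type extension produces a measure $\bbQ$ under which the system is satisfied on $[0,\tilde\tau\wedge\sup_{n}\eta_{n})$.

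The main obstacle is to prove $\sup_{n}\eta_{n}\ge\tilde\tau$ (i.e.\ $Y$ does not hit $0$ on $[0,\tilde\tau]$) and $\tilde Y^{0}_{\tilde\tau}>0$ $\bbQ$-a.s. The key observation is that $f$ has a $1/x$-singularity at $0$: since $u/(e^{u}-1)\to 1$ as $u\downarrow 0$, there is a constant $u_{0}>1$ such that $u/(e^{u}-1)\ge 1/2$ on $[0,u_{0}]$, so the bound $2\tilde Y^{0}_{s}f(V(s)-s,\tilde Y^{0}_{s},\tilde Z_{s})\ge 1$ holds whenever $\tilde Y^{0}_{s}\le u_{0}(V(s)-s)/(2\tilde Z_{s})$, and for $s<\tilde\tau$ this right-hand side is bounded below by a strictly positive continuous function of $s$ pathwise. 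Applying It\^{o}'s formula to $-\log\tilde Y^{0}$ under $\bbQ^{n}$ then gives a local drift
\[
\frac{1}{2(\tilde Y^{0}_{s})^{2}}\Bigl(1-\frac{2u}{e^{u}-1}\Bigr)
\]
that is nonpositive on this region, so $-\log\tilde Y^{0}$ is a local supermartingale near $0$. A standard scale-function argument (equivalently, pathwise comparison with a $2$-dimensional Bessel process, for which $0$ is inaccessible) then rules out its blow-up before $\tilde\tau$, yielding $\eta_{n}\uparrow\tilde\tau$ $\bbQ$-a.s. Finally, since $\tilde Z_{s}\to 0$ as $s\uparrow\tilde\tau$ makes the drift fade away on any set where $\tilde Y^{0}$ is bounded away from $0$, the limit $\tilde Y^{0}_{\tilde\tau}=\lim_{s\uparrow\tilde\tau}\tilde Y^{0}_{s}$ exists and is strictly positive. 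Combining these three steps with Corollary 5.3.23 of \cite{ks} delivers the strong solution and the required positivity.
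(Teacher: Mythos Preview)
Your overall architecture (pathwise uniqueness plus weak existence, then Yamada--Watanabe) and your uniqueness argument match the paper's. The weak-existence route, however, is genuinely different. The paper never attempts Girsanov on (\ref{aux_sde2}) directly; instead it passes to the squared process $U=Y^{2}$, observing that the resulting drift $2\sqrt{|U|}\,f(V(t)-t,\sqrt{|U|},Z_{t})+1$ is \emph{bounded} (by $3$, since $xf(t,x,z)\le 1$). Weak existence for $(U,Z)$ then follows from Stroock--Varadhan martingale-problem theory with no localisation or extension argument needed, and $Y=\sqrt{U}$ furnishes the solution of (\ref{aux_sde2}). Strict positivity of $U$ on $[0,\tau]$, including $U_{\tau}>0$, is established in Lemma \ref{aux_sde3} by an explicit alternating stopping-time argument tracking $C_{t}=2\sqrt{U_{t}}Z_{t}/(V(t)-t)$ and comparing with a squared $2$-dimensional Bessel process on the excursions where $C_{t}$ is small. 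Your comparison and the paper's rest on the same inequality $2Yf\ge 1$ for $u\le u_{0}$; the paper simply carries out the excursion bookkeeping in full.

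Your argument has two genuine gaps. First, the ``Tulcea-type extension'' is not justified. On the original space the $L^{n}_{T}$ do not converge to a probability density: under $\bbP$ the Brownian motion $\tilde Y^{0}=y+\tilde B$ hits $0$ at a finite time $\eta_{\infty}$ independent of $\tilde\tau$, and on $\{\eta_{\infty}<\tilde\tau\wedge T\}$ one has $\int_{0}^{\eta_{\infty}}f^{2}\,ds=\infty$ (since $f\sim 1/\tilde Y^{0}$ there), so $L^{n}_{T}\to 0$ on that event and $\bbE[\lim_{n}L^{n}_{T}]<1$. A correct construction would force you onto canonical path space with an explosion cemetery and then require a non-attainment proof under the extended law; none of this is carried out. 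The paper's squared-process device bypasses the issue entirely by making the drift bounded. Second, your claim that the drift ``fades away'' as $Z_{s}\to 0$ is false: $f(t,x,z)\to 1/x$ as $z\downarrow 0$. What \emph{is} true near $\tau$ is that $u=2Y_{s}Z_{s}/(V(s)-s)$ becomes small, so $2Y_{s}f\ge 1$ there, and comparison with a $2$-dimensional Bessel process on a terminal interval $[I,\tau]$ then yields $Y_{\tau}>0$; this is exactly the route the paper takes at the end of Lemma \ref{aux_sde3}, and you already have the needed inequality in your preceding paragraph.
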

\begin{proof}
Pathwise uniqueness can be shown as in Lemma \ref{uniqueness}; thus, its proof is omitted. Observe that if $Y$ is a solution to (\ref{aux_sde2}), then
\[
dY_t^2=2 Y_t dB_t +\left(2 \chf_{[\tau>t]} Y_t f (V(t)-t, Y_t, Z_t) + 1\right)\, dt.
\]
Inspired by this formulation we consider the following SDE:
\be \label{sde-U}
dU_t=2 \sqrt{|U_t|}dB_t + \left(2 \chf_{[\tau>t]} \sqrt{|U_t|} f(V(t)-t, \sqrt{|U_t|},
  Z_t) +1\right) \, dt,
\ee
with $U_0=y^2$. In Lemma \ref{aux_sde3} it is shown that there exists a weak solution to this SDE which is strictly positive in the interval $[0,\tau]$. This yields in particular that the absolute values can be removed from the SDE (\ref{sde-U}) considered over the interval $[0,\tau]$. Thus, it follows from an application of It\^{o}'s formula that $\sqrt{U}$ is a weak, therefore strong, solution to (\ref{aux_sde2}) in $[0,\tau]$ due to pathwise uniqueness and Corollary 5.3.23 in \cite{ks}. The global solution can now be easily constructed by the addition of $B_t- B_{\tau}$ after $\tau$. This further implies that $Y$ is strictly positive in $[0,\tau]$ since $\sqrt{U}$ is clearly strictly positive.
\end{proof}
\begin{lemma} \label{aux_sde3} There exists a weak solution to
\be \label{sdeU}
dU_t = 2 \sqrt{|U_t|}dB_t + \left(2 \sqrt{|U_t|} f(V(t)-t, \sqrt{|U_t|},
  Z_t) +1\right) \, dt,
\ee
with $U_0= y^2 $ upto and including $\tau$. Moreover, the solution is strictly positive in $[0, \tau]$.
\end{lemma}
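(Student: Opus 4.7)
The plan is to build a weak solution by a Girsanov change of measure, taking the squared $3$-dimensional Bessel process as the reference: its constant drift $3$ coincides, to leading order, with the desired $2\sqrt{u}\,f(V(t)-t,\sqrt{u},z)+1$ as $u \to 0$. On an auxiliary filtered probability space carrying two independent Brownian motions $\beta$ and $W$, I would construct $Z$ from $W$ via (\ref{sdeZ}), set $\tau$ as in (\ref{d:tau}), and let $R$ be the $3$-dimensional Bessel process from $y$, i.e.\ the unique strong solution of $dR_t=d\beta_t+R_t^{-1}\,dt$, $R_0=y$. Then $R$ stays strictly positive, is independent of $Z$ under the auxiliary measure $\bbP^0$, and $U^0_t:=R_t^2$ solves $dU^0_t=2\sqrt{U^0_t}\,d\beta_t+3\,dt$.

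A direct computation with $u:=2xz/t$ gives
\[
\phi(t,x,z):=f(t,x,z)-\frac{1}{x}=\frac{2z}{t}\cdot\frac{(u+1)e^{-u}-1}{u(1-e^{-u})},
\]
and elementary analysis of the second factor yields the two-sided bound $-1/x\leq\phi(t,x,z)\leq 0$ as well as $|\phi(t,x,z)|\leq z/t$. Setting $\phi_s:=\phi(V(s)-s,R_s,Z_s)$ and
\[
L_t:=\exp\left(\int_0^t\phi_s\,d\beta_s-\frac{1}{2}\int_0^t\phi_s^2\,ds\right),
\]
my plan is to prove that $L_{\cdot\wedge\tau}$ is a true $\bbP^0$-martingale by localising along $\tau_n:=\inf\{s:R_s\leq 1/n\}\wedge n\wedge\tau$: on $[0,\tau_n]$ one has $|\phi_s|\leq 1/R_s\leq n$, so Novikov's criterion makes $L^{\tau_n}$ a martingale, and because the $3$-Bessel process does not hit $0$ one has $\tau_n\uparrow\tau$ a.s. The uniform integrability of $\{L_{\tau_n}\}_n$ would then be established using the alternative bound $|\phi_s|\leq Z_s/(V(s)-s)$ in conjunction with Assumption \ref{a:sigma}(ii), which tames the singularity of $(V(s)-s)^{-1}$ near $s=0$. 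This produces a probability measure $\bbQ$ on $\cH_\tau$ with $d\bbQ|_{\cH_{t\wedge\tau}}=L_{t\wedge\tau}\,d\bbP^0$.

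By Girsanov, $B_t:=\beta_t-\int_0^{t\wedge\tau}\phi_s\,ds$ is a $\bbQ$-Brownian motion; because the density depends only on $\beta$-stochastic integrals and $\beta$ is $\bbP^0$-independent of $W$, the process $W$ remains a $\bbQ$-Brownian motion independent of $B$, so $Z$ still solves (\ref{sdeZ}) and $\tau$ retains its interpretation. On $[0,\tau]$ we then have
\[
dR_t=dB_t+\frac{1}{R_t}\,dt+\phi_t\,dt=dB_t+f(V(t)-t,R_t,Z_t)\,dt,
\]
and It\^o's formula applied to $U_t:=R_t^2$ immediately yields (\ref{sdeU}), with $U_t>0$ on $[0,\tau]$ inherited from $R_t>0$. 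The principal obstacle is establishing the martingale property of $L_{\cdot\wedge\tau}$: two distinct sources of singularity in $\phi$ must be controlled simultaneously --- the smallness of $R$ (ruled out in the limit by $3$-Bessel nonattainment of $0$) and that of $V(s)-s$ near $s=0$ (handled via Assumption \ref{a:sigma}(ii)) --- and the uniform-integrability step of the localisation argument is the most delicate part of the proof.
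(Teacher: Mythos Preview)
Your Girsanov route is genuinely different from the paper's. The paper first obtains a weak solution to (an extension of) (\ref{sdeU}) via the Stroock--Varadhan martingale problem, invokes a comparison lemma with squared Bessel processes for nonnegativity and non-explosion, and then proves \emph{strict} positivity on $[0,\tau]$ through a rather delicate argument: it tracks the ratio $C_t=2\sqrt{U_t}\,Z_t/(V(t)-t)$ via an interlacing sequence of stopping times $(I_n,J_n)$ and compares with a $2$-dimensional Bessel process on each excursion. In your approach strict positivity comes for free from the $3$-Bessel process $R$, and all the work is shifted to the Girsanov martingale verification. One structural difference worth noting: the paper's proof of this lemma uses only Assumption~\ref{a:sigma}(i), whereas your argument genuinely needs part~(ii) already here, to control $\phi$ near $s=0$.

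Two points in your sketch need fixing. First, your justification that $W$ remains a $\bbQ$-Brownian motion independent of $B$ is misstated: the density $L$ is \emph{not} $\sigma(\beta)$-measurable, since $\phi_s$ depends on $Z_s$. The correct reason is multidimensional Girsanov: the kernel has zero $dW$-component, so $W$ acquires no drift under $\bbQ$, and $[B,W]=[\beta,W]=0$ gives independence by L\'evy's characterisation. Second, and more importantly, the uniform-integrability step for $\{L_{\tau_n}\}$ is a real gap: neither bound $|\phi_s|\le 1/R_s$ nor $|\phi_s|\le Z_s/(V(s)-s)$ is by itself enough to produce an exponential-moment estimate uniform in $n$, and you do not say how they combine. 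The clean completion exploits the $\bbP^0$-independence of $R$ and $Z$: condition on $\sigma(W)$, so that $\tau$ and $s\mapsto Z_s/(V(s)-s)$ become deterministic; then $\int_0^\tau\phi_s^2\,ds\le\int_0^\tau Z_s^2(V(s)-s)^{-2}\,ds$ is a finite $\sigma(W)$-measurable constant (this is where Assumption~\ref{a:sigma}(ii) enters), Novikov is trivial conditionally, and $\bbE^0[L_\tau\mid W]=1$ yields $\bbE^0[L_\tau]=1$. Alternatively, you can run the interval-splitting Novikov argument used in the paper's Proposition~\ref{p:euy}.
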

\begin{proof}
Consider the measurable function $g: \bbR_+ \times \bbR^2 \mapsto [0,1]$ defined by
\[
g(t,x,z)=\left\{\ba{ll}
\sqrt{|x|} f(t,\sqrt{|x|},z), &\mbox{ for }(t,x,z) \in (0,\infty)\times \bbR \times \bbR_+ \\
1, & \mbox{ for } (t,x,z) \in (0,\infty)\times \bbR \times (-\infty,0)\\
 0, & \mbox{ for } (t,x,z) \in \{0\}\times \bbR^2
 \ea
 \right. ,
 \]
 and the following SDE:
 \be \label{sde-U1}
d\tilde{U}_t=2 \sqrt{|\tilde{U}_t|}dB_t + \left(2 g(V(t)-t, \sqrt{|\tilde{U}_t|},
  Z_t) +1\right) \, dt.
\ee

Observe that if we can show the existence of a positive weak solution to (\ref{sde-U1}), then $U =(\tilde{U}_{t\wedge \tau})_{t \geq 0}$ is a positive weak solution to (\ref{sdeU}) upto time $\tau$.

It follows
from Corollary 10.1.2 and Theorem 6.1.7 in \cite{SV} that the martingale problem defined by the stochastic differential equations for $(\tilde{U},Z)$ with the state space $\bbR^2$ is well-posed upto an explosion time, i.e. there exists a weak solution to (\ref{sde-U1}), along with (\ref{sdeZ}), valid upto the explosion time by Theorem 5.4.11 in \cite{ks}. Fix one of these solutions and call it $(\tilde{U}, Z)$. Then, since the range of $g$ is $[0,3]$, it follows from Lemma \ref{l:compz} that $\tilde{U}$ is nonnegative and there is no explosion.

Next it remains to show the strict positivity of $U$ in $[0, \tau]$. First, let $a$ and $b$ be strictly positive numbers such that
\[
\frac{a e^{-a}}{1-e^{-a}}=\frac{3}{4} \qquad \mbox{and} \qquad \frac{b e^{-b}}{1-e^{-b}}=\frac{1}{2}.
\]
As $\frac{x e^{-x}}{1-e^{-x}}$ is strictly decreasing for positive values of $x$, one has $0< a < b $. Now define the stopping time
\[
I_0:=\inf\{0 < t \leq \tau: \sqrt{U_t} Z_t \leq \frac{V(t)-t}{2} a\},
\]
where $\inf \emptyset = \tau$ by convention. As $\sqrt{U_{\tau}} Z_{\tau}=0$,  $\sqrt{U_0}
Z_0=y^2$, and $V(t)-t >0$ for $t>0$, we have that $0< I_0 < \tau$, $\nu^y$-a.s. by
continuity of $(U, Z)$ and $V$, where $\nu^y$ is the probability measure
associated to the fixed weak solution. Moreover, $U_t >0$ on the set $[t \leq I_0]$.

Note that $C_t:=\frac{2 \sqrt{U_t} Z_t}{V(t)-t}$ is continuous on $(0, \infty)$ and $C_{I_0}=a$.
Thus, $\bar{\tau}:=\inf\{t > I_0: C_t=0\}> I_0$. Consider the following sequence of stopping times:
\bean
J_n&:=&\inf\{I_n \leq t \leq \bar{\tau}: C_t \notin (0, b)\}\\
I_{n+1}&:=&\inf\{J_n \leq t \leq \bar{\tau}: C_t=a\}
\eean for $n \in \mathbb{N} \cup \{0\}$, where $\inf\emptyset =\bar{\tau}$ by convention.

Our aim is to show that  $\tau=\bar{\tau} = \lim_{n \rar \infty} J_n$, a.s.. We start with establishing the second equality. As $J_n$s are increasing and bounded by $\bar{\tau}$, the limit exists and is bounded by $\bar{\tau}$. Suppose that $J:=\lim_{n \rar \infty} J_n < \bar{\tau}$ with positive probability. Note that by construction we have $I_n \leq J_n \leq I_{n+1}$ and, therefore, $\lim_{n \rar \infty} I_n=J$. Since $C$ is continuous, one has $\lim_{n \rar \infty} C_{I_n}= \lim_{n \rar \infty} C_{J_n}$. However, as on the set $[J < \bar{\tau}]$ we have $C_{I_n}=a$ and $C_{J_n}=b$ for all $n$, we arrive at a contradiction. Therefore, $\bar{\tau}=J$.

Next, we will demonstrate that $\bar{\tau}=\tau$. Observe that since $\tau$ is finite, a.s., and $U$ does not explode until $\tau$, one has that $C_{\tau}=0$. Therefore, $\bar{\tau} \leq \tau$ and thus $C_{\bar{\tau}}=0$.  Suppose that $\bar{\tau} <\tau$ with positive probability. Then, we claim that on this set $C_{J_n}=b$ for all $n$, which will lead to a contradiction since then $b=\lim_{n \rar \infty} C_{J_n}=C_{\bar{\tau}}=0$. We will show our claim by induction.
\begin{enumerate}
\item For $n=0$, recall that $I_0 < \bar{\tau}$ by construction.  Also note that on $(I_0, J_0]$ the drift term in (\ref{sdeU}) is greater than
$2$ as $\frac{x e^{-x}}{1-e^{-x}}$ is strictly decreasing for positive values of $x$ and due to the choice of $a$ and $b$. Therefore the solution to (\ref{sdeU}) is strictly positive in
$(I_0, J_0]$ in view of Lemma \ref{l:compstop} since a 2-dimensional Bessel process is always strictly
positive. Thus,  $C_{J_0}=b$.
\item Suppose we have $C_{J_{n-1}}=b$. Then, due to continuity of $C$, $I_n < \bar{\tau}$. For the same reasons as before,  the solution to (\ref{sdeU}) is strictly positive in
$(I_n, J_n]$. Thus, $C_{J_n}=b$.
\end{enumerate}
Thus, we have shown that  for all $t >0$, $U_{\tau \wedge t} >0$, a.s.. In order to show that $U_{\tau}>0$
consider the stopping time $I:=\sup\{ I_n:I_n < \tau\}$. Then, we must have that $I <\tau$ a.s. since otherwise $a=C_I=C_{\tau}=0$, another contradiction. Similar to the earlier cases the drift term in $(I, \tau]$ is larger than $2$, thus, $U_{\tau}>0$ as well.
\end{proof}
\begin{proposition} \label{p:euy} There exists a unique strong solution to (\ref{aux_sde}) which is strictly positive on $[0,\tau]$.
\end{proposition}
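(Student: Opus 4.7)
The plan is to apply Girsanov's theorem to the weak solution of (\ref{aux_sde2}) produced in Proposition \ref{p:sde2}, thereby obtaining a weak solution of (\ref{aux_sde}); combining this with the pathwise uniqueness of Lemma \ref{uniqueness}, Corollary 5.3.23 in \cite{ks} will then yield a unique strong solution. From the decomposition (\ref{qx/q}) the drift of (\ref{aux_sde}) differs from that of (\ref{aux_sde2}) only by the additional linear term $\lambda_s := (Z_s - Y_s)/(V(s) - s)$ on $[0, \tau]$. Hence, starting with the strong solution $Y$ of (\ref{aux_sde2}) on $(\Om, \cH, \bbH, \bbP)$, which is strictly positive on $[0, \tau]$ by Proposition \ref{p:sde2}, I would introduce the stochastic exponential
\[
L_t := \mathcal{E}\Bigl(\int_0^{\cdot \wedge \tau} \lambda_s \, dB_s\Bigr)_t
\]
and pass to the equivalent measure $d\tilde\bbP := L_{\infty}\, d\bbP$. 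Under $\tilde\bbP$, Girsanov's theorem renders $\tilde B_t := B_t - \int_0^{\tau\wedge t} \lambda_s\, ds$ a Brownian motion independent of $W$, so that $(Y, Z, \tilde B)$ is a weak solution of (\ref{aux_sde}) under $\tilde\bbP$. Strict positivity of $Y$ on $[0, \tau]$ is then inherited by equivalence of $\bbP$ and $\tilde\bbP$.

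The main technical step is to confirm that $L$ is a true $\bbH$-martingale, and not merely a local martingale, so that $\tilde\bbP$ is a bona fide probability measure. I would localize at $\rho_n := \inf\{t : \int_0^{t \wedge \tau} \lambda_s^2\, ds \geq n\}$: on $[0, \rho_n]$ the bracket of the stochastic integral is bounded by $n$, so Novikov's criterion holds trivially and $L^{\rho_n}$ is a uniformly integrable martingale. To conclude $\bbE[L_T] = 1$ for every $T > 0$ it then suffices to show $\rho_n \uparrow \infty$ almost surely (equivalently, $\int_0^\tau \lambda_s^2\, ds < \infty$ a.s.) and to establish $L^1$-convergence of $L_{\rho_n \wedge T}$ to $L_T$.

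The crucial input for the a.s.\ finiteness of $\int_0^\tau \lambda_s^2\, ds$ -- which I expect to be the main obstacle -- is to split the integral at some small $\eps > 0$: near the origin, continuity bounds $|Z - Y|^2$, while $\int_0^{\eps}(V(s) - s)^{-2}\, ds < \infty$ by Assumption \ref{a:sigma}(2); on $[\eps, \tau]$, the continuity and strict positivity of $V(s) - s$ -- including at $s = \tau$, since $V(\tau) > \tau$ -- bound the denominator away from $0$, while continuity of $Y$, $Z$ and the a.s.\ finiteness of $\tau$ bound the numerator. This is precisely where both hypotheses $V(t) > t$ for all $t > 0$ and the integrability condition in Assumption \ref{a:sigma}(2) become indispensable. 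Should the $L^1$-convergence of $L_{\rho_n \wedge T}$ turn out to be delicate to verify directly, one may bypass it by constructing $\tilde\bbP$ as a consistent extension of the localized measures $d\tilde\bbP^n := L_{\rho_n \wedge T}\, d\bbP$ on $\cH_{\rho_n \wedge T}$, using that $\rho_n \uparrow \tau$ a.s.\ and $\tau$ is a.s.\ finite.
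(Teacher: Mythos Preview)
Your overall strategy---Girsanov applied to the solution of (\ref{aux_sde2}), then pathwise uniqueness plus Corollary 5.3.23 in \cite{ks}---is exactly the paper's. You also correctly isolate the missing drift as $(Z_s-Y_s)/(V(s)-s)$ and use Assumption \ref{a:sigma}(2) together with continuity to get $\int_0^{\tau}\lambda_s^2\,ds<\infty$ a.s., which is precisely what the paper checks first so that the exponential local martingale $L$ is well defined.

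The gap is in the step you yourself flag: upgrading $L$ from a local martingale to a true martingale. Almost-sure finiteness of the bracket gives you only that $L$ is a continuous local martingale with $L_{\rho_n\wedge T}\to L_T$ a.s.; Fatou then yields $\bbE[L_T]\le 1$, and nothing more. Your proposed bypass via a consistent extension of the measures $L_{\rho_n\wedge T}\,d\bbP$ does not close the loop: even granting the extension exists, to recover equivalence on $\cH_T$ you must show $\rho_n\uparrow\infty$ under the \emph{new} measure, i.e.\ that the solution of (\ref{aux_sde}) does not blow up---which is exactly what you are trying to establish. So this route is circular without an independent a~priori bound on $Y$ under $\tilde\bbP$.

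The paper resolves this by a piecewise Novikov argument (Corollary 3.5.14 in \cite{ks}): on $[\tau>s]$ one has $(Y_s-Z_s)^2\le Y_s^2+Z_s^2\le R_s+Z_s^2$, where $R$ is a squared $3$-dimensional Bessel process dominating $Y^2$ by comparison (the drift of $Y^2$ in (\ref{aux_sde2}) is at most $3$). Since $R$ and $Z$ are independent and both controlled by suprema of Brownian motions over $[0,V(T)]$, the reflection principle gives
\[
\bbE\Bigl[\exp\Bigl(\tfrac12\int_{t_{n-1}}^{t_n}\frac{(Y_s-Z_s)^2}{(V(s)-s)^2}\,ds\Bigr)\Bigr]<\infty
\]
whenever $\int_{t_{n-1}}^{t_n}(V(s)-s)^{-2}\,ds<1/V(T)$. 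Assumption \ref{a:sigma}(2) lets you partition $[0,T]$ into finitely many such intervals, and the martingale property follows. This comparison-plus-partition step is the substantive content your sketch is missing.
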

\begin{proof}
Due to Proposition \ref{p:sde2} there exists a unique  strong solution, $Y$, of (\ref{aux_sde2}). Define $(L_t)_{t \geq 0}$ by $L_0=1$ and
\[
dL_t= \chf_{[\tau >t]} L_t \frac{Y_t-Z_t}{V(t)-t}\, dB_t.
\]
Observe that there exists a solution to the above equation since
\[
\int_0^t \chf_{[\tau>s]}\left(\frac{Y_s-Z_s}{V(s)-s}\right)^2ds < \infty, \mbox{ a.s. } \forall t\geq 0.
\]
Indeed, since $Y$ and $Z$ are well-defined and continuous upto $\tau$, we have  $\sup_{s \leq \tau} |Y_s-Z_s| < \infty$, a.s. and thus the above expression is finite in view of Assumption \ref{a:sigma}.2.

If $(L_t)_{t \geq 0}$ is a true martingale, then for any $T>0$,  $\bbQ^T$ on $\cH_T$ defined by
\[
\frac{d\bbQ^T}{d\bbP_T}=L_T,
\]
where $\bbP_T$ is the restriction of $\bbP$ to $\cH_T$, is a probability measure on $\cH_T$ equivalent to $\bbP_T$. Then, by Girsanov Theorem (see, e.g.,  Theorem 3.5.1 in \cite{ks}) under $\bbQ_T$
\[
Y_t=y+ \beta^T_t + \int_0^{\tau \wedge
  t}\frac{q_x(V(s)-s,Y_s,Z_s)}{q(V(s)-s,Y_s,Z_s)}\,ds,
\]
for $t \leq T$ where $\beta^T$ is a $\bbQ^T$-Brownian motion. Thus, $Y$ is a weak solution to (\ref{aux_sde}) on $[0, T]$. Therefore, due to Lemma \ref{uniqueness} and Corollary 5.3.23 in \cite{ks}, there exists a unique strong solution to (\ref{aux_sde}) on $[0,T]$, and it is strictly positive on $[0,\tau]$ since $Y$ has this property.  Since $T$ is arbitrary, this yields a unique strong solution on $[0, \infty)$ which is strictly positive on $[0,\tau]$.

Thus, it remains to show that $L$ is a true martingale. Fix $T>0$ and for some $0 \leq t_{n-1} <
t_n \leq T$  consider
\be \label{novikov}
\bbE\left[\exp\left(\frac{1}{2}\int_{t_{n-1} \wedge \tau}^{t_n \wedge \tau}
  \left(\frac{Y_t-Z_t}{V(t)-t}\right)^2 dt \right)\right].
\ee
As both $Y$ and $Z$ are positive until $\tau$, $(Y_t-Z_t)^2 \leq Y_t^2
+Z_t^2 \leq R_t +Z_t^2$ by comparison where $R$ satisfies
\[
R_t=y^2 + 2 \int_0^t \sqrt{R_s}dB_s + 3 t.
\]
Therefore, since $R$ and $Z$ are independent, the  expression in (\ref{novikov})
is bounded by
\bea
&&\bbE\left[\exp\left(\frac{1}{2}\int_{t_{n-1}}^{t_n}
  R_t\upsilon(t) dt \right)\right]
\bbE\left[\exp\left(\frac{1}{2}\int_{t_{n-1}}^{t_n}
  Z^2_t\upsilon(t) dt \right)\right]  \label{e:bnov1}
\\
&\leq& \bbE\left[\exp\left(\frac{1}{2}R^{\ast}_T\int_{t_{n-1}}^{t_n}
  \upsilon(t) dt \right)\right]
\bbE\left[\exp\left(\frac{1}{2}(Z^{\ast}_T)^2\int_{t_{n-1}}^{t_n}
  \upsilon(t) dt \right)\right],  \nn
  \eea
where $Y^{\ast}_t:=\sup_{s \leq t}|Y_s|$ for any \cadlag process
$Y$ and $\upsilon(t):=\left(\frac{1}{V(t)-t}\right)^2$.  Recall that $Z$ is only a time-changed
Brownian motion where the time change is deterministic and $R_t$ is
the square of the Euclidian norm of a 3-dimensional standard Brownian
motion with initial value $(y^2,0,0)$. Thus, since $V(T)>T$, the above expression is
going to be finite if
\be \label{e:bnov2}
E^{y \vee 1}\left[\exp\left(\frac{1}{2}(\beta^{\ast}_{V(T)})^2\int_{t_{n-1}}^{t_n}
  \upsilon(t) dt \right)\right] <\infty,
\ee
where $\beta$ is a standard Brownian motion and  $E^x$ is the
expectation with respect to the law of a standard Brownian  motion
starting at $x$. Indeed, it is clear that, by time change, (\ref{e:bnov2}) implies that the second expectation in the RHS of (\ref{e:bnov1}) is finite. Moreover, since $R_T ^*$ is the supremum over $[0,T]$ of a $3$-dimensional Bessel square process, it can be bounded above by the sum of three supremums of squared Brownian motions over $[0,V(T)]$ (remember that $V(T) >T$), which gives that (\ref{e:bnov2}) is an upper bound for the first expectation in the RHS of (\ref{e:bnov1}) as well.

In view of the reflection principle for standard
Brownian motion (see, e.g. Proposition 3.7 in Chap.~3 of \cite{ry}) the above expectation is going to be finite if
\be \label{cond}
\int_{t_{n-1}}^{t_n}
  \upsilon(t) dt < \frac{1}{V(T)}.
\ee

However, Assumption \ref{a:sigma} yields that $\int_0^T
 \upsilon(t) dt <\infty$. Therefore, we can find
 a finite sequence of real numbers $0=t_0<t_1<\ldots<t_{n(T)}=T$
 that satisfy (\ref{cond}). Since $T$ was arbitrary, this means that
 we can find a sequence $(t_n)_{n \geq 0}$ with $\lim_{n \rar
   \infty}t_n=\infty$ such that (\ref{novikov}) is finite for all
 $n$. Then, it follows from Corollary 3.5.14 in \cite{ks} that $L$ is
 a martingale.
\end{proof}

The above proposition establishes $0$ as a lower bound to the solution of (\ref{aux_sde}) over the interval $[0,\tau]$, however, one can obtain a tighter bound. Indeed, observe that $\frac{q_x}{q}(t,x,z)$ is strictly increasing in $z$ on $[0,\infty)$ for fixed $(t,x) \in \bbR_{++}^2$. Moreover,
\[
\frac{q_x}{q}(t,x,0):=\lim_{z \downarrow 0}\frac{q_x}{q}(t,x,z)=\frac{1}{x}-\frac{x}{t}.
\]
Therefore, $\frac{q_x}{q}(V(t)-t,Y_t,Z_t) > \frac{q_x}{q}(V(t)-t,Y_t,0)=\frac{1}{Y_t} - \frac{Y_t}{V(t)-t}$ for $t \in (0, \tau]$. Although $\frac{q_x}{q}(t,x,z)$ is not Lipschitz in $x$ (thus, standard comparison results don't apply), if $Y_0 < Z_0$ then the  comparison result of Exercise 5.2.19 in \cite{ks} can be applied to obtain $\bbP[Y_t \geq R_t; 0 \leq t <\tau]=1$ where $R$ is given by(\ref{e:tcb}).

However, this strict inequality may break down at $t=0$ when $Y_0 \geq Z_0$, and, thus, rendering the results of Exercise 5.2.19 is inapplicable.  Nevertheless, we will show in Proposition \ref{p:compare} that  $\bbP[Y_t \geq R_t; 0 \leq t <\tau]=1$ where $R$ is the solution of
\be \label{e:tcb}
R_t=y+ B_t + \int_0^t \left\{\frac{1}{R_s}- \frac{R_s}{V(s)-s}\right\}\, ds, \qquad y>0.
\ee
Before proving the comparison result we first establish that there exists a unique strong solution to the SDE above and it equals in law to a scaled, time-changed 3-dimensional Bessel process. We incidentally observe that the existence of a {\em weak} solution to an SDE similar to that in (\ref{e:tcb}) is proved in Proposition 5.1 in \cite{cl} along with its distributional properties. Unfortunately, our SDE (\ref{e:tcb}) cannot be reduced to theirs and moreover, in our setting, existence of a weak solution is not enough.

\begin{proposition} \label{p:tcb} There exists a unique strong solution to (\ref{e:tcb}). Moreover, the law of $R$ is equal to the law of $\tilde{R}=(\tilde{R}_t)_{t \geq 0}$, where $\tilde{R}_t=\lambda_t \rho_{\Lambda_t}$ where $\rho$ is a 3-dimensional Bessel process starting at $y$ and
\bean
\lambda_t&:=&\exp\left(-\int_0^t \frac{1}{V(s)-s}\, ds\right), \\
\Lambda_t&:=&\int_0^t \frac{1}{\lambda^2_s}\,ds.
\eean
\end{proposition}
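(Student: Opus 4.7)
The plan is to construct a weak solution to (\ref{e:tcb}) by an explicit deterministic rescaling and time change of a $3$-dimensional Bessel process, then to establish pathwise uniqueness, and finally to invoke the Yamada–Watanabe theorem (Corollary 5.3.23 in \cite{ks}) to obtain a unique strong solution. A pleasant by-product is exactly the distributional identity asserted in the proposition. As a preliminary, Cauchy–Schwarz applied to Assumption \ref{a:sigma}.2 gives $\int_0^{\eps}\frac{ds}{V(s)-s} \leq \sqrt{\eps \int_0^{\eps}\frac{ds}{(V(s)-s)^2}} < \infty$, while on compact subsets of $(0,\infty)$ the integrand $1/(V(t)-t)$ is bounded; hence $\lambda_t \in (0,1]$ for every $t\geq 0$ and, since $1/\lambda^2 \geq 1$, $\Lambda$ is a $C^1$ strictly increasing bijection of $\bbR_+$ onto itself.

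On an auxiliary probability space let $\rho$ be a $3$-dimensional Bessel process starting at $y$, driven by a Brownian motion $\tilde W$, and set $\tilde R_t := \lambda_t \rho_{\Lambda_t}$, so that $\tilde R_0 = y$. Because $\lambda$ is deterministic and $C^1$ with $\lambda'_t/\lambda_t = -1/(V(t)-t)$, combining the product rule with the Bessel SDE $d\rho_u = du/\rho_u + d\tilde W_u$ under the time change $u=\Lambda_t$, $du = dt/\lambda_t^2$, gives
\[
d\tilde R_t = \left(\frac{1}{\tilde R_t} - \frac{\tilde R_t}{V(t)-t}\right)dt + \lambda_t\, d\tilde W_{\Lambda_t}.
\]
The continuous local martingale $B_t := \int_0^t \lambda_s\, d\tilde W_{\Lambda_s}$ has quadratic variation $\int_0^t \lambda_s^2\, d\Lambda_s = t$, so by Lévy's characterisation $B$ is a standard Brownian motion. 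Hence $\tilde R$ is a strictly positive weak solution of (\ref{e:tcb})---strict positivity following from the fact that a $3$-dimensional Bessel process never hits zero and $\lambda > 0$---which simultaneously verifies the claim on the law of $R$.

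For pathwise uniqueness, let $R^1, R^2$ be two solutions of (\ref{e:tcb}) on the same filtered space driven by the same $B$. Stochastic integrals cancel in the difference, and It\^o's formula yields
\[
(R^1_t - R^2_t)^2 = 2\int_0^t (R^1_s - R^2_s)\left\{\left(\frac{1}{R^1_s}-\frac{1}{R^2_s}\right) - \frac{R^1_s - R^2_s}{V(s)-s}\right\}\, ds \leq 0,
\]
since $x \mapsto 1/x - x/(V(s)-s)$ is strictly decreasing on $(0,\infty)$. Therefore $R^1\equiv R^2$, and combining weak existence with pathwise uniqueness, Corollary 5.3.23 in \cite{ks} delivers a unique strong solution of (\ref{e:tcb}).

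The main technical subtlety I anticipate is the drift singularity at $0$ together with the fact that Assumption \ref{a:sigma}.2 only controls $1/(V-t)^2$ near zero rather than $1/(V-t)$; the Cauchy–Schwarz step above handles the resulting integrability issue, while the Bessel representation $\tilde R = \lambda \rho_\Lambda$ automatically keeps the solution strictly positive on compacts, so the $1/R$ drift remains well defined throughout. Everything else is a routine combination of It\^o calculus, Lévy's characterisation, and Yamada–Watanabe.
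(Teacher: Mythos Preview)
Your proof is correct and follows essentially the same route as the paper: construct the weak solution $\tilde R_t=\lambda_t\rho_{\Lambda_t}$ from a $3$-dimensional Bessel process via deterministic scaling and time change, establish pathwise uniqueness from the monotonicity of $x\mapsto 1/x - x/(V(s)-s)$, and conclude via Yamada--Watanabe. Your Cauchy--Schwarz step deriving $\int_0^{\eps}\frac{ds}{V(s)-s}<\infty$ from Assumption~\ref{a:sigma}.2 is a nice explicit justification that $\lambda_t$ is well defined, which the paper uses but leaves implicit.
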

\begin{proof} Note that $\frac{1}{x}-\frac{x}{t}$ is decreasing in $x$ and, thus, pathwise uniqueness holds for (\ref{e:tcb}). Thus, it suffices to find a weak solution for the existence and the uniqueness of strong solution. Consider the 3-dimensional Bessel process $\rho$ which is the unique strong solution (see Proposition 3.3 in Chap.~VI in \cite{ry}) to
\[
\rho_t=y + B_t + \int_0^t \frac{1}{\rho_s}\, ds.
\]
Therefore, $\rho_{\Lambda_t}= y + B_{\Lambda_t} + \int_0^{\Lambda_t}\frac{1}{\rho_s}\, ds.$ Now, $M_t=B_{\Lambda_t}$ is a martingale with respect to the time-changed filtration $(\cH_{\Lambda_t})$ with quadratic variation given by  $\Lambda$. By  integration by parts we see that
\[
d(\lambda_t \rho_{\Lambda_t})=\lambda_t dM_t + \left\{\frac{1}{\lambda_t \rho_{\Lambda_t}} - \frac{\lambda_t \rho_{\Lambda_t}}{V(t)-t}\right\}dt.
\]
Since $\lambda_0 \rho_{\Lambda_0}=y$ and $\int_0^t \lambda^2_s d[M,M]_s=t$, we see that $\lambda_t \rho_{\Lambda_t}$ is a weak solution to (\ref{e:tcb}). This obviously implies the equivalence in law.
\end{proof}

\begin{proposition} \label{p:compare} Let $R$ be the unique strong solution to (\ref{e:tcb}). Then, $\bbP[Y_t \geq R_t; 0 \leq t <\tau]=1$ where $Y$ is the unique strong solution of (\ref{aux_sde}).
\end{proposition}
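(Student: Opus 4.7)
The plan is to exploit the fact that $Y$ and $R$ are driven by the \emph{same} Brownian motion $B$, so that on $[0,\tau]$ the difference $D_t := R_t - Y_t$ is a continuous process of bounded variation (in fact absolutely continuous). This lets us sidestep the standard comparison machinery, whose Lipschitz hypotheses fail here, and work directly with Tanaka's formula, which for a BV process with $D_0=0$ reduces to
\[
(R_t-Y_t)^+ = \int_0^t \chf_{\{R_s > Y_s\}}\left[\frac{1}{R_s}-\frac{R_s}{V(s)-s}-\frac{q_x(V(s)-s,Y_s,Z_s)}{q(V(s)-s,Y_s,Z_s)}\right]ds,
\]
since the local time at $0$ of a BV process vanishes. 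The problem then reduces to establishing nonpositivity of the integrand on $\{R_s>Y_s\}$.

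I would first verify that both drifts are integrable on $[0,\tau]$. Strict positivity of $Y$ there is Proposition \ref{p:euy}; strict positivity of $R$ and $\int_0^\tau R_s^{-1}\,ds<\infty$ follow from the representation $R_t=\lambda_t\rho_{\Lambda_t}$ in Proposition \ref{p:tcb} together with a change of variables $u=\Lambda_s$ yielding $\int_0^\tau R_s^{-1}\,ds\le\lambda_\tau^{-1}\int_0^{\Lambda_\tau}\rho_u^{-1}\,du<\infty$, the last integral being finite by the standard $3$-dimensional Bessel estimate. Integrability of $R_s/(V(s)-s)$ and of the $Y$-drift then reduces to $\int_0^\tau (V(s)-s)^{-1}\,ds<\infty$; near $s=0$ one has $V(s)-s\le 1$, so $(V(s)-s)^{-1}\le(V(s)-s)^{-2}$, which is integrable by Assumption \ref{a:sigma}.2, while on any subinterval bounded away from $0$ the function $(V(s)-s)^{-1}$ is continuous and hence bounded.

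For the sign of the integrand I would invoke two monotonicity properties of $\frac{q_x}{q}$: it is decreasing in $x$ for fixed $(t,z)$ (from (\ref{qx/q}), as noted in the proof of Lemma \ref{uniqueness}), and strictly increasing in $z$ on $[0,\infty)$ for fixed $(t,x)\in\bbR_{++}^2$, with boundary value $\frac{q_x}{q}(t,x,0)=\frac{1}{x}-\frac{x}{t}$ (both observed right before the statement of the proposition). On $\{R_s>Y_s\}\cap\{s<\tau\}$, where $Z_s>0$, chaining these inequalities yields
\[
\frac{q_x(V(s)-s,Y_s,Z_s)}{q(V(s)-s,Y_s,Z_s)} \ge \frac{q_x(V(s)-s,R_s,Z_s)}{q(V(s)-s,R_s,Z_s)} \ge \frac{1}{R_s}-\frac{R_s}{V(s)-s},
\]
so the integrand is $\le 0$ on this set, forcing $(R_t-Y_t)^+\equiv 0$ on $[0,\tau)$.

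The main obstacle is really the integrability step: the $(V(s)-s)^{-1}$ singularity at $s=0$ is precisely what Assumption \ref{a:sigma}.2 is engineered to control, and the $R_s^{-1}$ singularity is tamed by the Bessel representation from Proposition \ref{p:tcb}. Once integrability is in place the Tanaka computation is short and entirely pathwise; in particular no perturbation of initial conditions is required, which circumvents the obstruction the authors flag with Exercise 5.2.19 in \cite{ks} when $Y_0\ge Z_0$.
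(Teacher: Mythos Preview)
Your proof is correct and takes a genuinely more direct route than the paper. Both proofs start identically: $D_t:=R_t-Y_t$ is absolutely continuous on $[0,\tau)$ (same Brownian driver), so Tanaka's formula has no local-time term and reduces to
\[
(R_t-Y_t)^+=\int_0^t \chf_{\{R_s>Y_s\}}\left[\frac{q_x}{q}(V(s)-s,R_s,0)-\frac{q_x}{q}(V(s)-s,Y_s,Z_s)\right]ds.
\]
At this point the paper inserts the intermediate term $\frac{q_x}{q}(V(s)-s,Y_s,0)$, drops the $z$-piece by monotonicity in $z$, but then treats the remaining $x$-difference $\frac{q_x}{q}(\cdot,R_s,0)-\frac{q_x}{q}(\cdot,Y_s,0)$ via a Lipschitz bound on $[1/n,\infty)$, localizes with $\tau_n=\inf\{t:R_t\wedge Y_t=1/n\}$, and closes with Gronwall's inequality. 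You instead chain the \emph{two} monotonicity properties of $\frac{q_x}{q}$ (decreasing in $x$, increasing in $z$) to get the integrand $\le 0$ on $\{R_s>Y_s\}$ outright, which makes the Lipschitz/localization/Gronwall machinery superfluous. This is a legitimate simplification; in fact the paper's own intermediate bound already has a nonpositive integrand by $x$-monotonicity of $x\mapsto 1/x-x/t$, so the Gronwall step was never strictly needed.

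Two minor remarks. First, your integrability discussion, while correct, is more than required: since $R$ and $Y$ are strong solutions on $[0,\tau)$, their drifts are a.s.\ locally integrable there, which is all Tanaka's formula needs. Second, the paper's localization does buy one small thing: it yields the comparison up to and including $\tau$ (via $\lim_n\tau_n>\tau$), whereas your argument as written gives it on $[0,\tau)$ --- but that is exactly the statement of the proposition, so nothing is lost.
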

\begin{proof} Note that
\[
R_t - Y_t = \int_0^t \left\{\frac{q_x}{q}(V(s)-s,R_s, 0)-\frac{q_x}{q}(V(s)-s,Y_s, Z_s)\right\}ds,\]
so that by Tanaka's formula  (see Theorem 1.2 in Chap.~VI of \cite{ry})
\bean
(R_t-Y_t)^+&=& \int_0^t \chf_{[R_s>Y_s]}\left\{\frac{q_x}{q}(V(s)-s,R_s, 0)-\frac{q_x}{q}(V(s)-s,Y_s, Z_s)\right\}ds\\
&=&\int_0^t \chf_{[R_s>Y_s]}\left\{\frac{q_x}{q}(V(s)-s,R_s, 0)-\frac{q_x}{q}(V(s)-s,Y_s, 0)\right\}ds \\
&&+\int_0^t \chf_{[R_s>Y_s]}\left\{\frac{q_x}{q}(V(s)-s,Y_s, 0)-\frac{q_x}{q}(V(s)-s,Y_s, Z_s)\right\}ds\\
&\leq&\int_0^t \chf_{[R_s>Y_s]}\left\{\frac{q_x}{q}(V(s)-s,R_s, 0)-\frac{q_x}{q}(V(s)-s,Y_s, 0)\right\}ds,
\eean
since the local time of $R-Y$ at $0$ is identically $0$ (see Corollary 1.9 n Chap.~VI of \cite{ry}). Let $\tau_n:=\inf\{t>0: R_t \wedge Y_t= \frac{1}{n}\}.$ Note that as $R$ is strictly positive and $Y$ is strictly positive on $[0, \tau]$, $\lim_{n \rar \infty} \tau_n >\tau$. Since for each $t \geq 0$
\[
\left|\frac{q_x}{q}(t,x, 0)-\frac{q_x}{q}(t,y, 0)\right| \leq \left(\frac{1}{t} + \frac{1}{n^2}\right)|x-y|
 \]
 for all $x, y \in [1/n, \infty)$, we have
 \[
 (R_{t\wedge \tau_n} -Y_{t \wedge \tau_n})^+ \leq \int_0^{t} (R_{s\wedge \tau_n}-Y_{s\wedge \tau_n})^+\left(\frac{1}{V(s)-s} + \frac{1}{n^2}\right)ds.
 \]
 Thus, by Gronwall's inequality (see Exercise 14 in Chap.~V of \cite{Pr}), we have $(R_{t\wedge \tau_n} -Y_{t \wedge \tau_n})^+ = 0$ since
 \[
 \int_0^t \left(\frac{1}{V(s)-s} + \frac{1}{n^2}\right)ds < \infty
 \]
 by Assumption \ref{a:sigma}. Thus, the claim follows from the continuity of $Y$ and $R$ and the fact that $\lim_{n \rar \infty} \tau_n >\tau$.
\end{proof}

\begin{remark} \label{r:compare} Note that the above proof does not use the particular SDE satisfied by $Z$. The result of the above proposition will remain valid as long as $Z$ is nonnegative and $Y$ is the unique strong solution of (\ref{aux_sde}), strictly positive on $[0, \tau]$.
\end{remark}
Since the solution to (\ref{aux_sde}) is strictly positive on $[0,\tau]$ and the drift term in (\ref{sdeX}) after $\tau$ is the same as that of a 3-dimensional Bessel bridge from $X_{\tau}$ to $0$ over $[\tau, V(\tau)]$, we have proved
\begin{proposition}\label{p:up} There exists a unique strong solution to (\ref{sdeX}). Moreover, the solution is strictly positive in $[0, \tau]$.
\end{proposition}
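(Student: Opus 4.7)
My plan is to observe that the SDE (\ref{sdeX}) naturally decomposes into three time regimes, and on each of them the construction either follows immediately from Proposition \ref{p:euy} or is a classical result.

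\textbf{Regime 1: $t \in [0,\tau]$.} The second integrand in (\ref{sdeX}) vanishes identically (both limits equal $t$), so on this interval the SDE (\ref{sdeX}) coincides with (\ref{aux_sde}) with initial condition $y=1$. Proposition \ref{p:euy} then delivers a unique strong solution $Y$, strictly positive on $[0,\tau]$. In particular, $Y_\tau>0$ almost surely, which provides a well-defined, strictly positive starting point for the next regime.

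\textbf{Regime 2: $t\in[\tau,V(\tau)]$.} Since $V$ is deterministic and strictly increasing and $\tau$ is finite a.s., $V(\tau)$ is $\cF_\tau$-measurable and finite. On this interval (\ref{sdeX}) reduces, conditionally on $\cF_\tau$, to
\be \label{plan:bridge}
X_t = Y_\tau + (B_t-B_\tau) + \int_\tau^{t}\left\{\frac{1}{X_s}-\frac{X_s}{V(\tau)-s}\right\}ds,
\ee
which is precisely the SDE of a $3$-dimensional Bessel bridge from the positive value $Y_\tau$ to $0$ over $[\tau,V(\tau)]$. Pathwise uniqueness is immediate because the drift in (\ref{plan:bridge}) is decreasing in $x$, by exactly the one-line argument used in Lemma \ref{uniqueness}; existence of a strong solution, together with the fact that it is strictly positive on $[\tau,V(\tau))$ and hits $0$ for the first time at $V(\tau)$, is a classical property of the $3$-dimensional Bessel bridge (see, e.g., \cite{ry}).

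\textbf{Regime 3: $t\geq V(\tau)$.} Both integrals in (\ref{sdeX}) are then frozen at the values they attained at $V(\tau)$, so the SDE reduces to $X_t = X_{V(\tau)} + (B_t-B_{V(\tau)}) = B_t-B_{V(\tau)}$, which trivially defines a unique strong solution.

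Concatenating the three pieces yields a unique strong solution to (\ref{sdeX}) on $[0,\infty)$, and strict positivity on $[0,\tau]$ is inherited from Proposition \ref{p:euy}. There is essentially no obstacle here — the technical work has already been done in Propositions \ref{p:sde2} and \ref{p:euy}; the only point requiring care is the measurability of $V(\tau)$ at time $\tau$ so that the Bessel bridge in Regime 2 is a legitimate strong solution of an SDE driven by $B$, and this is immediate since $V$ is deterministic.
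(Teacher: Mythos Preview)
Your proposal is correct and follows essentially the same approach as the paper: the paper also reduces (\ref{sdeX}) to the auxiliary SDE (\ref{aux_sde}) on $[0,\tau]$ via Proposition \ref{p:euy}, and then observes that the drift after $\tau$ is that of a $3$-dimensional Bessel bridge from $X_\tau>0$ to $0$ over $[\tau,V(\tau)]$. You have simply made the three regimes and the third (trivial) phase after $V(\tau)$ more explicit than the paper does.
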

{ Using the well-known properties of a 3-dimensional Bessel
  bridge (see, e.g., Section 12.1.3, in particular expression (12.9)
  in \cite{abm}), we also have the following}
\begin{corollary}
Let $X$ be the unique strong solution of (\ref{sdeX}). Then,
\[
V(\tau)=\inf\{t>0: X_t=0\}.
\]
\end{corollary}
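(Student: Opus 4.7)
The plan is short, since Proposition \ref{p:up} already does most of the work. Set $\sigma := \inf\{t>0:X_t=0\}$; we must show $\sigma=V(\tau)$ a.s.

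First, Proposition \ref{p:up} asserts that $X$ is strictly positive on $[0,\tau]$, and in particular $X_\tau>0$ a.s. Hence $\sigma>\tau$, and on the interval $[\tau,V(\tau)]$ the SDE (\ref{sdeX}) for $X$ reduces to
\[
dX_t = dB_t + \frac{\ell_a(V(\tau)-t,X_t)}{\ell(V(\tau)-t,X_t)}\,dt,
\]
which, conditionally on $\cF_\tau$, is exactly the SDE satisfied by a $3$-dimensional Bessel bridge from $X_\tau$ to $0$ over the (now deterministic) interval $[\tau,V(\tau)]$, since both $X_\tau$ and $V(\tau)$ are $\cF_\tau$-measurable.

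Second, I would invoke the well-known property of the $3$-dimensional Bessel bridge recalled in Section~12.1.3 of \cite{abm} (in particular formula (12.9) there): a $3$-dimensional Bessel bridge from a strictly positive starting point to $0$ over a finite horizon $T$ is almost surely strictly positive on $[0,T)$ and takes the value $0$ precisely at time $T$. Working on the regular conditional distribution given $\cF_\tau$, this applies with starting point $X_\tau>0$ and horizon $V(\tau)-\tau>0$ (recall $V(t)>t$ for $t>0$ by Assumption~\ref{a:sigma}.1 and $\tau>0$ a.s.\ since $Z_0=1$), yielding $X_t>0$ for $t\in[\tau,V(\tau))$ and $X_{V(\tau)}=0$. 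Combining with the first step gives $\sigma=V(\tau)$.

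I do not expect any real obstacle here: the only care needed is to argue that the classical Bessel-bridge property transfers to our setting where the terminal time $V(\tau)$ is random but $\cF_\tau$-measurable. This is handled either by conditioning on $\cF_\tau$ and using the strong Markov property at $\tau$, or by directly constructing a $3$-dimensional Bessel bridge from $X_\tau$ to $0$ over $[\tau,V(\tau)]$ on an enlarged space and invoking pathwise uniqueness for the post-$\tau$ SDE (which is standard, the drift $\ell_a/\ell$ being smooth in $x$ away from $0$) to identify it with $X$ on $[\tau,V(\tau)]$.
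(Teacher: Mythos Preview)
Your proposal is correct and follows essentially the same approach as the paper: the paper simply states the corollary as an immediate consequence of Proposition \ref{p:up} together with the well-known properties of the $3$-dimensional Bessel bridge, citing exactly the same reference (Section 12.1.3 and formula (12.9) in \cite{abm}). Your write-up is in fact more detailed than the paper's, which gives no explicit proof beyond that citation.
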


Thus, in order to finish the proof of Theorem \ref{th:main} it remains to show that $X$ is a standard Brownian motion in its own filtration. We will achieve this result in several steps. First, we will obtain the canonical decomposition of $X$ with respect to the minimal filtration,  $\bbG$,  satisfying the usual conditions such that $X$ is $\bbG$-adapted and $\tau$ is a $\bbG$-stopping time. More precisely, $\bbG=(\cG_t)_{t \geq 0}$ where $\cG_t= \cap_{u >t} \tilde{\cG}_u$, with
$\tilde{\cG}_t:=\mathcal{N} \bigvee \sigma(\{X_s, s \leq t\}, \tau \wedge t)$ and
$\mathcal{N}$ being the set of $\bbP$-null sets. Then, we will initially enlarge this filtration with $\tau$ to show that the canonical decomposition of $X$ in this filtration is the same as that of a Brownian motion starting at $1$ in its own filtration enlarged with its first hitting time of $0$. This observation will allow us to conclude that the law of $X$ is the law of a Brownian motion.

In order to carry out this procedure we will use the following key result, the proof of which is deferred until the next section for the clarity of the exposition. We recall that
\[ H(t,a) = \int_0 ^{\infty} q(t,a,y) dy ,\]
where $q(t,a,y)$ is the transition density of a Brownian motion killed at $0$.
\begin{proposition} \label{c_density} Let $X$ be the unique strong
  solution of (\ref{sdeX}) and $f: \bbR_+ \mapsto \bbR$ be a bounded measurable
  function with a compact support contained in $(0,\infty)$. Then
\[
\bbE[\chf_{[\tau>t]}f(Z_t)|\cG_t]=\chf_{[\tau>t]}\int_0^{\infty}f(z)
\frac{q(V(t)-t,X_t,z)}{H(V(t)-t, X_t)}\,dz.
\]
\end{proposition}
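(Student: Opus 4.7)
My approach is to verify directly that
\[
\pi_t(f) := \int_0^\infty f(z)\,\frac{q(V(t)-t, X_t, z)}{H(V(t)-t, X_t)}\,dz
\]
equals $\bbE[\chf_{[\tau>t]}f(Z_t)\mid \cG_t]$ on $\{\tau>t\}$. By a monotone class argument it suffices to treat $f\in C_c^\infty((0,\infty))$. For such $f$, the fact that $Z_\tau = 0\notin\operatorname{supp} f$ gives the convenient identity $\chf_{[\tau>t]}f(Z_t) = f(Z_{\tau\wedge t})$, removing any discontinuity at $\tau$ on the left-hand side. Note also that both sides agree at $t=0$, since $q(0,1,z)\to \delta_{z-1}$ yields $\pi_0(f)=f(1)$.

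The key Itô input exploits the heat equation $q_t = \tfrac12 q_{xx}$: for each fixed $z>0$,
\[
dq(V(t)-t, X_t, z) \;=\; \tfrac{V'(t)}{2}\,q_{xx}(V(t)-t, X_t, z)\,dt \;+\; q_x(V(t)-t, X_t, z)\,dX_t.
\]
Integrating against $f(z)\,dz$, and invoking the symmetry $q_{xx}=q_{zz}$ together with integration by parts in $z$ (with vanishing boundary contributions because $q(\cdot,\cdot,0)=0$ and $f$ has compact support), produces the It\^o decomposition of $\psi(t):=\int_0^\infty f(z)\,q(V(t)-t, X_t, z)\,dz$. An entirely parallel computation for $H(V(t)-t, X_t)$ (using $H_t=\tfrac12 H_{aa}$), combined with the It\^o quotient rule, then yields the $\bbH$-semimartingale decomposition of $\pi_t(f) = \psi(t)/H(V(t)-t, X_t)$. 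In parallel, It\^o applied to $f(Z_{\tau\wedge t})$ gives a $W$-martingale part plus the drift $\int_0^{\tau\wedge t} \tfrac{V'(s)}{2}\,f''(Z_s)\,ds$.

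Matching the two decompositions is the heart of the proof: after substituting $dX_t = (q_x/q)(V(t)-t,X_t,Z_t)\,dt + dB_t$ on $[0,\tau]$ and simplifying via the heat equations for $q$ and $H$, the drift of $\pi_t(f)$ should reduce to $\tfrac{V'(t)}{2}\,\pi_t(f'')$, which is precisely the $\cG$-optional projection of the drift $\tfrac{V'(t)}{2} f''(Z_t)$ of $f(Z_{\tau\wedge t})$. Testing against any bounded $\cG_t$-measurable $F$, taking expectations, and using Fubini then yields the claimed identity. The principal obstacle is executing the quotient-rule algebra cleanly: multiple cancellations are required among drift terms involving $q$, $H$, $q_x/q$, and $H_a/H$, and the differentiation under the integral defining $\psi$ and $\pi_t(f)$ must be justified with integrability estimates on $q$ and its derivatives (where Assumption \ref{a:sigma}.2 plays a role). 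A secondary subtlety is that $\chf_{[\tau>t]}\pi_t(f)$ has a jump at $\tau$ (since $\pi_\tau(f)\neq 0$ in general), whereas $f(Z_{\tau\wedge t})$ is continuous; but because $\tau$ is a $\bbG$-stopping time and the identity is required only pointwise in $t$ (not as an equality of processes), this discrepancy does not affect the conditional expectation identity once drift matching is established.
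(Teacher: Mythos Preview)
Your approach has a genuine gap. First, the claim that the $\bbH$-drift of $\pi_t(f)$ reduces to $\tfrac{V'(t)}{2}\pi_t(f'')$ is not correct. When you substitute $dX_t = dB_t + \alpha_t\,dt$ with $\alpha_t = (q_x/q)(V(t)-t,X_t,Z_t)$ into the It\^o expansion of $\pi_t(f)=\psi_t/H(V(t)-t,X_t)$, the drift contains the term $\alpha_t\bigl[H^{-1}\!\int f\, q_x\,dz - \psi H_x H^{-2}\bigr]$ (the $dB$-coefficient multiplied by $\alpha_t$), which depends on $Z_t$ and does not vanish; there are also several $\alpha$-free cross terms beyond $\tfrac{V'}{2}\pi_t(f'')$. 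Second, and more fundamentally, the step ``$\pi_t(f'')$ is precisely the $\cG$-optional projection of $\tfrac{V'(t)}{2}f''(Z_t)$'' is circular: that is the very identity you are trying to prove, applied to $f''$ in place of $f$. Showing that your candidate $\pi$ satisfies some filtering-type evolution does \emph{not} by itself identify it with the conditional expectation unless you have a uniqueness theorem for that evolution.

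This missing uniqueness is exactly what the paper supplies. It first passes to the conditioned measure $\bbP^{\tau,t}$ (an $h$-transform under which $Z$ becomes a Markov diffusion on $[0,t)$ with transition density (\ref{e:td-bm})), turning the question into a genuine nonlinear filtering problem for the pair $(X,Z)$ of (\ref{e:z4f})--(\ref{e:x4f}). It then establishes well-posedness of the joint martingale problem (Proposition~\ref{p:mpj}) so as to invoke the Kurtz--Ocone uniqueness theorem for the Kushner--Stratonovich equation (Theorem~\ref{th:munique}). Only with uniqueness in hand does the verification that the candidate density solves the filtering equation (Corollary~\ref{pdfZ}) give the conclusion, followed by a limit $s\uparrow t$. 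Your It\^o computations are, in spirit, the verification step of this program; but without the change of measure (to obtain a well-posed signal process) and, crucially, without the uniqueness argument, the proof cannot be closed.
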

Using the above proposition we can easily obtain the $\bbG$-canonical
decomposition of $X$.
\begin{corollary}Let $X$ be the unique strong
  solution of (\ref{sdeX}). Then,
\[
M_t:=X_t -1- \int_0^{\tau \wedge
  t}\frac{H_x(V(s)-s,X_s)}{H(V(s)-s,X_s)}\,ds -\int_{\tau \wedge
  t}^{V(\tau) \wedge
  t} \frac{\ell_a(V(\tau)-s,X_s)}{\ell(V(\tau)-s,X_s)}\,ds
\]
is a standard $\bbG$-Brownian motion starting at $0$.
\end{corollary}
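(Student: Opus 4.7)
The plan is to derive the $\bbG$-canonical decomposition of $X$ by projecting its $\bbH$-drift onto $\bbG$ and then invoke L\'evy's characterization. Because $X$ and $\tau\wedge t$ are $\bbH$-adapted, one has $\bbG\subset\bbH$, and $B$ is an $\bbH$-Brownian motion by construction. Since $X$ is an $\bbH$-semimartingale whose finite-variation part is absolutely continuous, and $X$ is $\bbG$-adapted by definition of $\bbG$, the filtration-shrinkage principle (see, e.g., Stricker's theorem together with the Jacod--Yor projection argument) yields that $X$ is also a $\bbG$-semimartingale whose drift density is the $\bbG$-optional projection of the $\bbH$-drift density.

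On $\{s>\tau\}$, the $\bbH$-drift $\ell_a(V(\tau)-s,X_s)/\ell(V(\tau)-s,X_s)$ is already $\bbG$-adapted, because $\tau$ is $\cG_s$-measurable for $s\geq \tau$ and $X_s$ is $\cG_s$-measurable; hence its $\bbG$-projection equals itself. The main task is therefore to compute, for $s\leq \tau$,
\[
\bbE\!\left[\chf_{[\tau>s]}\frac{q_x(V(s)-s,X_s,Z_s)}{q(V(s)-s,X_s,Z_s)}\,\bigg|\,\cG_s\right].
\]
Since $X_s$ is $\cG_s$-measurable, I would freeze $X_s=x$ and apply Proposition \ref{c_density} with $f(z)=q_x(V(s)-s,x,z)/q(V(s)-s,x,z)$, after a standard truncation to reduce to bounded test functions with compact support in $(0,\infty)$. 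The conditional density of $Z_s$ on $\{\tau>s\}$ supplied by Proposition \ref{c_density} produces, after cancellation,
\[
\chf_{[\tau>s]}\,\frac{1}{H(V(s)-s,X_s)}\int_0^{\infty} q_x(V(s)-s,X_s,z)\,dz \;=\; \chf_{[\tau>s]}\,\frac{H_x(V(s)-s,X_s)}{H(V(s)-s,X_s)},
\]
where the last identity uses the definition \eqref{H:alt} together with differentiation under the integral sign, justified by the Gaussian decay of $q$ and its derivatives in $z$.

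Combining these two projections gives exactly the drift appearing in the definition of $M$, so $M$ is a continuous $\bbG$-local martingale. Its quadratic variation equals that of $X$, which is $[X,X]_t=t$ since the subtracted finite-variation term is absolutely continuous. L\'evy's characterization then shows that $M$ is a standard $\bbG$-Brownian motion. The main obstacle is the rigorous justification of the projection step: verifying the requisite integrability to apply Proposition \ref{c_density} to the unbounded integrand $q_x/q$ (handled by truncation plus monotone/dominated convergence) and legitimating the interchange of $\partial_x$ and the integral in $z$. Once these technical points are in place, the rest reduces to a direct computation.
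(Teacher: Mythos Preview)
Your proposal is correct and follows essentially the same route as the paper: project the $\bbH$-drift of $X$ onto $\bbG$, compute the conditional expectation of $q_x/q$ via Proposition~\ref{c_density} (obtaining $H_x/H$ after the same cancellation and interchange of $\partial_x$ with the $dz$-integral), and conclude by L\'evy's characterization. The only cosmetic difference is that the paper cites Theorem~8.1.5 in \cite{ka} together with Lemma~\ref{l:2moment} for the projection/integrability step where you invoke Stricker's theorem and a truncation argument; the substance is the same.
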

\begin{proof}
It follows from  Theorem 8.1.5 in \cite{ka} and Lemma \ref{l:2moment}
that
\[
X_t -1- \int_0^{t}\bbE\left[\chf_{[\tau>s]}\frac{q_x(V(s)-s,X_s,Z_s)}{q(V(s)-s,X_s,Z_s)}\bigg|\cG_s\right]\,ds -\int_{\tau \wedge
  t}^{V(\tau) \wedge
  t} \frac{\ell_a(V(\tau)-s,X_s)}{\ell(V(\tau)-s,X_s)}\,ds
\]
is a $\bbG$-Brownian motion. However,
\bean
&&\bbE\left[\chf_{[\tau>s]}\frac{q_x(V(s)-s,X_s,Z_s)}{q(V(s)-s,X_s,Z_s)}\bigg|\cG_s\right]\\
&=&\chf_{[\tau>s]}\int_0^{\infty}\frac{q_x(V(s)-s,X_s,z)}{q(V(s)-s,X_s,z)}\frac{q(V(s)-s,X_s,z)}{H(V(s)-s,
    X_s)}\,dz \\
&=&\chf_{[\tau>s]}\frac{1}{H(V(s)-s, X_s)}\int_0^{\infty}q_x(V(s)-s,X_s,z)\,
dz \\
&=&\chf_{[\tau>s]}\frac{1}{H(V(s)-s, X_s)}\frac{\partial}{\partial
  x}\int_0^{\infty}q(V(s)-s,x,z)\,dz\bigg|_{x=X_s} \\
&=&\chf_{[\tau>s]}\frac{H_x(V(s)-s, X_s)}{H(V(s)-s, X_s)}.
\eean
\end{proof}

A naive way to show that $X$ as a solution of (\ref{sdeX}) is a Brownian
motion is to calculate the conditional distribution of $\tau$ given
the minimal filtration generated by $X$ satisfying the usual
conditions. Although, as we will see later, the conditional distribution of $V(\tau)$ given an observation of $X$ is defined by the function $H$ as defined in (\ref{d:H}), verification of this fact leads to a highly non-standard filtering problem. For this reason we use an alternative approach which utilizes the well-known decomposition of Brownian motion conditioned on its first hitting time as in \cite{CaCe}.

We shall next find the
canonical decomposition of $X$ under $\bbG^{\tau}:=(\cG^{\tau}_t)_{t
  \geq 0}$ where $\cG^{\tau}_t=\cG_t \bigvee \sigma(\tau)$. Note that
$\cG^{\tau}_t=\cF^X_{t+}  \bigvee \sigma(\tau)$. Therefore, the canonical
decomposition of $X$ under $\bbG^{\tau}$ would be its canonical
decomposition with respect to its own filtration initially enlarged
with $\tau$. As we shall see in the next proposition it will be the
same as the canonical decomposition of a Brownian motion in its own
filtration initially enlarged with its first hitting time of $0$.
\begin{proposition} \label{p:ie} Let $X$ be the unique strong
  solution of (\ref{sdeX}). Then,
\[
X_t-1 - \int_{0}^{V(\tau) \wedge
  t} \frac{\ell_a(V(\tau)-s,X_s)}{\ell(V(\tau)-s,X_s)}\,ds
\]
is a standard $\bbG^{\tau}$-Brownian motion starting at $0$.
\end{proposition}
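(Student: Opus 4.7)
The plan is to apply Jacod's theorem on initial enlargement of filtrations, enlarging $\bbG$ by the $\bbG$-stopping time $V(\tau)$ (which generates the same additional $\sigma$-field as $\tau$, since $V$ is a deterministic strictly increasing bijection). Starting from the $\bbG$-canonical decomposition of $X$ given in the preceding corollary, Jacod's formula will produce an additional drift on $[0,\tau]$ that will exactly convert the $H_x/H$ term into the $\ell_a/\ell$ term appearing in the statement, while on $[\tau,\infty)$ the enlargement will be trivial.

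The first main step is to compute the conditional law of $V(\tau)$ given $\cG_t$ on $\{\tau > t\}$. Formula (\ref{dist_tau}) yields $\bbP[V(\tau) > r \mid \cH_t] = H(r - V(t), Z_t)$ for $r > V(t)$. Taking conditional expectation with respect to $\cG_t$, invoking Proposition \ref{c_density} applied to $z \mapsto H(r - V(t), z)$ (after a routine cutoff-plus-DCT extension of that proposition from compactly supported to bounded measurable $f$), and then using the Chapman--Kolmogorov-type identity
\[
\int_0^\infty q(s,x,z)\,H(u,z)\,dz = H(s+u,x),
\]
which just expresses the Markov property of a Brownian motion killed at $0$ on the event of survival, I would obtain
\[
\bbP[V(\tau) > r \mid \cG_t]\,\chf_{[\tau > t]} = \chf_{[\tau > t]}\,\frac{H(r - t, X_t)}{H(V(t) - t, X_t)}, \qquad r > V(t).
\]
Differentiating in $r$ yields the $\cG_t$-conditional density $p_t(r) = \ell(r - t, X_t)/H(V(t) - t, X_t)$ on $\{\tau > t\}$.

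Next I apply Jacod's theorem. Writing $p_s(r) = f(s, r, X_s)$ with $f(s, r, x) = \ell(r - s, x)/H(V(s) - s, x)$ and using $d\langle M, X\rangle_s = ds$, where $M$ is the $\bbG$-Brownian motion of the preceding corollary, the chain rule gives
\[
\frac{d\langle M, p_\cdot(V(\tau))\rangle_s}{p_s(V(\tau))} = \partial_x \log f(s, V(\tau), X_s)\,ds = \left[\frac{\ell_a(V(\tau)-s,X_s)}{\ell(V(\tau)-s,X_s)} - \frac{H_x(V(s)-s,X_s)}{H(V(s)-s,X_s)}\right] ds.
\]
Subtracting this additional drift from $M$ on $[0,\tau]$ and substituting into the $\bbG$-decomposition of $X$, the $H_x/H$ contributions cancel and leave precisely the drift claimed in the statement. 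For $t \geq \tau$ the enlargement is trivial, since $\tau$ being $\cG_t$-measurable implies $\cG^\tau_t = \cG_t$; and on $[\tau, V(\tau)\wedge t]$ the $\bbG$-decomposition already has the $\ell_a/\ell$ form, so no further adjustment is needed there. Gluing these pieces yields a continuous $\bbG^\tau$-local martingale with quadratic variation $[X]_t = t$, and L\'evy's characterization delivers the Brownian motion property.

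The main technical obstacles will be (i) the extension of Proposition \ref{c_density} to the non-compactly-supported test function $z \mapsto H(r - V(t), z)$, handled by the cutoff approximation above using $0 \leq H \leq 1$ and $H(r-V(t),z) \to 0$ as $z \downarrow 0$; and (ii) the justification of Jacod's theorem over the random interval $[0, \tau]$, since on $\{\tau \leq t\}$ the conditional law of $V(\tau)$ degenerates to a Dirac mass and is no longer absolutely continuous with respect to Lebesgue. The latter would be handled by localizing at the $\bbG$-stopping time $\tau$, on $[0,\tau)$ of which the required absolute-continuity hypothesis holds uniformly via the explicit formula for $p_t(r)$ compared with the unconditional density $\ell(r,1)$ of $V(\tau) = T_1$.
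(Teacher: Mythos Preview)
Your proposal is correct and follows essentially the same route as the paper: compute the $\cG_t$-conditional law of the enlargement variable via Proposition~\ref{c_density}, then invoke an initial-enlargement formula to obtain the additional drift $\ell_a/\ell - H_x/H$ on $[0,\tau]$. The paper conditions on $\tau$ and cites Theorem~1.6 in Mansuy--Yor \cite{my}, whereas you condition on $V(\tau)$ and cite Jacod; since $V$ is a deterministic increasing bijection and the Mansuy--Yor result is a specialization of Jacod's criterion, the two computations are equivalent (your density $\ell(r-t,X_t)/H(V(t)-t,X_t)$ for $V(\tau)$ is exactly the paper's density $\sigma^2(u)\,\ell(V(u)-t,X_t)/H(V(t)-t,X_t)$ for $\tau$ after the change of variable $r=V(u)$). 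The technical obstacles you flag---extending Proposition~\ref{c_density} beyond compactly supported test functions, and handling the degeneracy on $\{\tau\leq t\}$---are present in the paper's argument as well and are absorbed there into the Mansuy--Yor framework; your explicit cutoff/localization plan is an adequate substitute.
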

\begin{proof} First, we will
  determine the law of $\tau$ conditional on $\cG_t$ for each $t$.
Let $f$ be a test function. Then
\bean
&&\bbE\left[\chf_{[\tau>t]}f(\tau)|\cG_t\right]=\bbE\left[\bbE\left[\chf_{[\tau>t]}f(\tau)|\cH_t\right]\bigg|\cG_t\right]\\
&=&\bbE\left[\chf_{[\tau>t]}\int_{t}^{\infty}f(u)\sigma^2(u)\ell(V(u)-V(t),Z_t)\,du\bigg|\cG_t\right]
\\
&=&\chf_{[\tau>t]}\int_{t}^{\infty}f(u)\sigma^2(u)\int_0^{\infty}\ell(V(u)-V(t),z)
\frac{q(V(t)-t,X_t,z)}{H(V(t)-t, X_t)}\,dz\,du\\
&=&-\chf_{[\tau>t]}\int_t^{\infty}f(u)\sigma^2(u)\int_0^{\infty}H_t(V(u)-V(t),z)
\frac{q(V(t)-t,X_t,z)}{H(V(t)-t, X_t)}\,dz\,du\\
&=&-\chf_{[\tau>t]}\int_t^{\infty}f(u)\sigma^2(u)
\frac{\partial}{\partial
  s}\int_0^{\infty}\int_0^{\infty}q(s,z,y)\,dy\,\frac{q(V(t)-t,X_t,z)}{H(V(t)-t,
  X_t)}\,dz\bigg|_{s=V(u)-V(t)}\,du \\
&=&-\chf_{[\tau>t]}\int_t^{\infty}f(u)\sigma^2(u)
\frac{\partial}{\partial
  s}\int_0^{\infty}\int_0^{\infty} \frac{q(V(t)-t,X_t,z)}{H(V(t)-t,
  X_t)} q(s,z,y) \,dz\,dy\bigg|_{s=V(u)-V(t)}\,du \\
&=&-\chf_{[\tau>t]}\int_t^{\infty}f(u)\sigma^2(u)
\frac{\partial}{\partial
  s}\int_0^{\infty} \frac{q(V(t)-t+s,X_t,y)}{H(V(t)-t,
  X_t)} \,dy \bigg|_{s=V(u)-V(t)}\,du\\
&=&-\chf_{[\tau>t]}\int_t^{\infty}f(u)\sigma^2(u)
 \frac{H_t(V(u)-t,X_t)}{H(V(t)-t,
  X_t)} \,du\\
&=&\chf_{[\tau>t]}\int_t^{\infty}f(u)\sigma^2(u)
 \frac{\ell(V(u)-t,X_t)}{H(V(t)-t,
  X_t)} \,du.
\eean
Thus, $\bbP[\tau \in du, \tau>t|\cG_t]=\chf_{[\tau>t]}\sigma^2(u)
 \frac{\ell(V(u)-t,X_t)}{H(V(t)-t,
  X_t)} \,du.$

Then, it follows from Theorem 1.6 in \cite{my} that
\[
M_t - \int_0^{\tau \wedge t}\left(\frac{\ell_a(V(\tau)-s,
    X_s)}{\ell(V(\tau)-s,X_s)}
  -\frac{H_x(V(s)-s,X_s)}{H(V(s)-s,X_s)}\right)\,ds
\]
is a $\bbG^{\tau}$-Brownian motion as in Example 1.6 in \cite{my}. This completes the proof.\end{proof}
\begin{corollary} Let $X$ be the unique strong solution of (\ref{sdeX}). Then, $X$ is a Brownian motion with respect to $\bbF^X$.
\end{corollary}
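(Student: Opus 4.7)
The plan is to identify $X$ in law with a standard Brownian motion starting at $1$, by exploiting the canonical decomposition under $\bbG^\tau$ obtained in Proposition \ref{p:ie} and matching it to the classical initial-enlargement decomposition of a Brownian motion by its first hitting time of $0$. Since the law of a Brownian motion determines the law of the process in its natural filtration, this will imply that $X$ is an $\bbF^X$-Brownian motion.

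First, I would set $\beta$ to be a standard Brownian motion starting at $1$ defined on some auxiliary probability space, and let $T:=\inf\{t>0:\beta_t=0\}$. By a classical result on initial enlargement (see, e.g., Jeulin--Yor, or Example 1.6 in \cite{my} which has already been invoked in Proposition \ref{p:ie}), in the filtration $\bbF^\beta$ initially enlarged by $\sigma(T)$, the process
\[
\tilde\beta_t := \beta_t - 1 - \int_0^{T\wedge t}\frac{\ell_a(T-s,\beta_s)}{\ell(T-s,\beta_s)}\,ds
\]
is a standard Brownian motion that is independent of $T$ (the latter because $T$ is $\cF^\beta_0\vee\sigma(T)$-measurable and $\tilde\beta$ is a Brownian motion in that filtration). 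Thus $(\beta,T)$ satisfies the SDE
\[
Y_t = 1 + W_t + \int_0^{S\wedge t}\frac{\ell_a(S-s,Y_s)}{\ell(S-s,Y_s)}\,ds,
\]
driven by a Brownian motion $W$ independent of the random terminal time $S$, with $(W,S)=(\tilde\beta,T)$.

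Next, Proposition \ref{p:ie} says that $X$ satisfies the very same SDE in $\bbG^\tau$, with $W=M$ (the $\bbG^\tau$-Brownian motion from that proposition) and $S=V(\tau)$; here $V(\tau)$ is $\cG^\tau_0$-measurable and hence independent of $M$, and $V(\tau)\stackrel{d}{=}T_1\stackrel{d}{=}T$ by the computation following \eqref{dist_tau}. The SDE above has pathwise unique solutions: the drift $\frac{\ell_a(S-s,y)}{\ell(S-s,y)}=\frac{1}{y}-\frac{y}{S-s}$ is strictly decreasing in $y$ on $(0,\infty)$, so the argument of Lemma \ref{uniqueness} applies on $[0,S)$, while after $S$ the process is driven purely by the Brownian motion (recall $V(\tau)=\inf\{t>0:X_t=0\}$ from the Corollary, and the same is true for $\beta$ at $T$). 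By Yamada--Watanabe, there exists a measurable functional $\Phi$ such that any solution is $\Phi(S,W)$; therefore $X=\Phi(V(\tau),M)$ and $\beta=\Phi(T,\tilde\beta)$.

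Since $(V(\tau),M)\stackrel{d}{=}(T,\tilde\beta)$ (each component has the prescribed law and the two components are independent in both pairs), we conclude $X\stackrel{d}{=}\beta$, so $X$ has the law of a Brownian motion started at $1$. Because being a Brownian motion is a statement about the law of the path, $X$ is automatically a Brownian motion with respect to its natural (augmented) filtration $\bbF^X$. The main subtlety in executing this plan is the bookkeeping around the random terminal time in the SDE: one must justify the functional representation $X=\Phi(V(\tau),M)$ carefully, e.g.\ by first conditioning on $S$ (which is $\cG^\tau_0$-measurable), applying pathwise uniqueness on $[0,S)$ to obtain $X^{S}=\Phi_1(S,M^{S})$, and then extending past $S$ by adding the increments of $M$ (so that $X_t-X_S=M_t-M_S$ for $t\geq S$, since the Bessel-bridge drift vanishes there once $X_S=0$). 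The remaining pieces---independence of $M$ from $V(\tau)$ and the equality in distribution $V(\tau)\stackrel{d}{=}T$---are immediate.
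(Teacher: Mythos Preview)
Your argument is correct and coincides with the paper's proof, which invokes Proposition \ref{p:ie} to identify $X$ under $\bbG^\tau$ as a $3$-dimensional Bessel bridge of (random) length $V(\tau)$ and then appeals to Theorem~3.6 in \cite{CaCe}; you have simply written out explicitly the content of that reference (independence of the driving Brownian motion from $V(\tau)$ via $\cG^\tau_0$-measurability, pathwise uniqueness of the bridge SDE, and the Yamada--Watanabe functional representation to transfer the law).
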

\begin{proof}
It follows from Proposition \ref{p:ie} that  $\bbG^{\tau}$-
decomposition of $X$ is given by
\[
X_t=1 + \mu_t + \int_0^{V(\tau) \wedge t}
\left\{\frac{1}{X_s}-\frac{X_s}{V(\tau)-s}\right\}\,ds,
\]
where $\mu$ is a standard $\bbG^{\tau}$-Brownian motion vanishing at $0$.
Thus, $X$ is a 3-dimensional Bessel bridge from $1$ to $0$ of length
$V(\tau)$. As $V(\tau)$ is the first hitting time of $0$ for $X$ and $V(\tau)=T_1$ in distribution, the result follows using
the same argument as in Theorem 3.6 in \cite{CaCe}.
\end{proof}

Next section is devoted to the proof of Proposition \ref{c_density}.

\section{Conditional density of $Z$}\label{Zdensity}
Recall from Proposition \ref{c_density} that we are interested in the conditional distribution of $Z_t$ on the set $[\tau>t]$. To this end we introduce the following change of measure on $\cH_t$. Let $\bbP_t$ be the restriction of $\bbP$ to $\cH_t$ and define $\bbP^{\tau, t}$ on $\cH_t$ by
\[
\frac{d\bbP^{\tau, t}}{d\bbP_t}=\frac{\chf_{[\tau>t]}}{\bbP[\tau>t]}.
\]

Note that this measure change is equivalent to an {\em h-transform} on the paths of $Z$ until time $t$ where the h-transform is defined by the function $H(V(t)-V(\cdot), \cdot)$ and $H$ is the function defined in (\ref{d:H}) (see Part 2, Sect.~VI.13 of \cite{D} for the definition and properties of h-transforms). Note also that $(\chf_{[\tau>s]}H(V(t)-V(s), Z_s))_{s \in [0,t]}$ is a $(\bbP, \bbH)$-martingale as a consequence of (\ref{dist_tau}). Therefore, an application of Girsanov's theorem yields that under $\bbP^{\tau, t}$ $(X,Z)$ satisfy
\bea
dZ_s&=&\sigma(s)d\beta^t_s + \sigma^2(s)\frac{H_x(V(t)-V(s), Z_s)}{H(V(t)-V(s), Z_s)}ds \label{e:z4f} \\
dX_s&=&dB_s + \frac{q_x(V(s)-s,X_s,Z_s)}{q(V(s)-s,X_s,Z_s)}\,ds, \label{e:x4f}
\eea
with $X_0=Z_0=1$ and $\beta^t$ being a $\bbP^{\tau, t}$-Brownian motion. Moreover, due to the property of h-transforms,  transition density of $Z$ under $\bbP^{\tau, t}$ is given by
\be \label{e:tdz}
\bbP^{\tau, t}[Z_s\in dz|Z_r=x]=q(V(s)-V(r), x, z) \frac{H(V(t)-V(s), z)}{H(V(t)-V(r), x)}.
\ee
Thus, $\bbP^{\tau, t}[Z_s\in dz|Z_r=x]=p(V(t); V(r), V(s), x,z)$
where
\be \label{e:td-bm}
p(t;r ,s, x, z)=q(s-r, x,z)\frac{H(t-s, z)}{H(t-r,x)}.
\ee
Note that $p$ is the transition density of the Brownian motion killed at $0$ after the analogous h-transform where the h-function is given by $H(t-s, x)$.

\begin{lemma}\label{l:rcf} Let $\cF^{\tau,t, X}_s=\sigma(X_r; r \leq s) \vee \cN^{\tau,t}$ where $X$ is the process defined by (\ref{e:x4f}) with $X_0=1$, and $\cN^{\tau, t}$ is the collection of $\bbP^{\tau,t}$-null sets. Then the  filtration $(\cF^{\tau,t,X}_s)_{s \in [0,t]}$ is right-continuous.
\end{lemma}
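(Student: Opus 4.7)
The plan is to identify $X$, under $\bbP^{\tau, t}$, as a continuous Markov process in its own augmented natural filtration, and then appeal to the classical result on right-continuity of augmented natural filtrations of continuous Markov processes (see, e.g., the discussion of natural filtrations of Markov processes in Chap.~III of \cite{ry}).

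Under $\bbP^{\tau, t}$, the system (\ref{e:z4f})-(\ref{e:x4f}) is driven by two independent Brownian motions with coefficients depending only on the current values $(s, X_s, Z_s)$, so the pair $(X,Z)$ is a time-inhomogeneous Markov process on $(0,\infty)^2$; observe that $\tau > t$ holds $\bbP^{\tau, t}$-a.s., hence both coordinates remain strictly positive on $[0,t]$. Consequently, for bounded measurable $f$ and $u > s$ in $[0,t]$,
\[
\bbE[f(X_u)\mid \cH_s] = G(s,u,X_s,Z_s), \qquad \bbP^{\tau,t}\mbox{-a.s.},
\]
for some measurable function $G$, and by the tower property,
\[
\bbE[f(X_u)\mid \cF^{\tau,t,X}_s] = \bbE\bigl[G(s,u,X_s,Z_s)\,\big|\,\cF^{\tau,t,X}_s\bigr], \qquad \bbP^{\tau, t}\mbox{-a.s.}
\]

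The key step will be to show that the conditional distribution of $Z_s$ given $\cF^{\tau,t,X}_s$ depends on the $X$-past only through $X_s$. My plan is to carry this out by a Bayes-type computation based on the explicit $h$-transformed transition density (\ref{e:td-bm}) of $Z$ under $\bbP^{\tau,t}$: I would discretize the $X$-observations on $[0,s]$ along a refining sequence of partitions and write the joint density of $(X_{s_1},\ldots,X_{s_n},Z_s)$ as a product of transition kernels; after cancellation/telescoping, the conditional density of $Z_s$ given $(X_{s_1},\ldots,X_{s_n})$ should depend on the observations only through the terminal value $X_{s_n}$, and passing to the limit $n \to \infty$ would conclude this step. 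Granting this, $\bbE[f(X_u)\mid \cF^{\tau,t,X}_s]$ is a measurable function of $X_s$ alone, which establishes the (time-inhomogeneous) Markov property of $X$ in its own filtration under $\bbP^{\tau, t}$. Combined with continuity of $X$, the classical theorem cited above then gives right-continuity of $(\cF^{\tau,t,X}_s)_{s \in [0,t]}$.

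The main obstacle is the filtering-type computation of the conditional law of $Z_s$. It closely resembles Proposition \ref{c_density}, but since the latter will be proved using the present lemma, the calculation must be performed directly from the transition kernel (\ref{e:td-bm}), avoiding any appeal to Proposition \ref{c_density}. A secondary technicality is to make the limiting procedure over discretizations rigorous, which should follow from the continuity of the kernel $p$ in (\ref{e:td-bm}) in all its variables on $(0,\infty)^2$ together with path-continuity of $X$ and $Z$.
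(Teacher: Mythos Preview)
Your approach is genuinely different from the paper's, and the key step---showing that the conditional law of $Z_s$ given $\cF^{\tau,t,X}_s$ depends on the $X$-past only through $X_s$---has a real gap.

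You propose to write the joint density of $(X_{s_1},\ldots,X_{s_n},Z_s)$ as a product of transition kernels and hope for a telescoping that leaves only $X_{s_n}$. But the kernel (\ref{e:td-bm}) is the transition density of $Z$ alone; to run the Bayes computation you need the \emph{joint} transition kernel of $(X,Z)$, which is not explicit. Even granting access to it, the assertion that the filter collapses to a function of the current observation $X_s$ is exactly the content of Corollary \ref{pdfZ}, which the paper establishes via the Kushner--Stratonovich equation and the Kurtz--Ocone uniqueness theorem---machinery that presupposes a filtration satisfying the usual conditions, hence the present lemma. So the circularity you flag is genuine, and the discretization route does not escape it: finite-dimensionality of a nonlinear filter is a special structural feature, not something visible from a partition-level Bayes formula. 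A secondary point is that the right-continuity result you cite from \cite{ry} is for Feller processes, so you would also owe a verification of the Feller property for $X$ under $\bbP^{\tau,t}$.

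The paper avoids filtering entirely. It localizes with $\tau_n:=\inf\{s>0:X_s=1/n\}$; on $[0,\tau_n]$ the drift $\frac{q_x}{q}(V(s)-s,X_s,Z_s)$ is controllable (cf.\ (\ref{qx/q}) and the argument of Proposition \ref{p:euy}), so a Girsanov change of measure turns $X^{\tau_n}$ into a Brownian motion under an equivalent probability. Since equivalent measures share null sets, the augmented natural filtration of $X^{\tau_n}$ coincides with the augmented Brownian filtration and is therefore right-continuous; that is, $(\cF^{\tau,t,X}_{\tau_n\wedge s})_{s\in[0,t]}$ is right-continuous for each $n$. A short set-algebra argument plus $\tau_n\uparrow$ (with $\tau_n>t$ eventually, $\bbP^{\tau,t}$-a.s.) then yields right-continuity of $(\cF^{\tau,t,X}_s)_{s\in[0,t]}$. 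This is both shorter and logically upstream of the filtering results.
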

The proof of the above lemma is trivial once we observe that $(\cF^{\tau,t, X}_{\tau_n \wedge s})_{s \in [0,t]}$, where $\tau_n:=\inf\{s>0:X_s=\frac{1}{n}\}$, is right continuous. This follows from the observation that $X^{\tau_n}$ is a Brownian motion under an equivalent probability measure, which can be shown using the arguments of Proposition \ref{p:euy} along with the identity (\ref{qx/q}) and the fact that $\frac{1}{X}$ is bounded upto $\tau_n$. Thus, for each $n$ one has
\bean
\cF^{\tau,t, X}_{\tau_n }\cap\cF^{\tau,t, X}_{u}&=&\cF^{\tau,t, X}_{\tau_n \wedge u}=\bigcap_{s>u}\cF^{\tau,t, X}_{\tau_n \wedge s}\\
&=&\bigcap_{s>u}\left(\cF^{\tau,t, X}_{\tau_n}\cap\cF^{\tau,t, X}_{s}\right)=\left(\bigcap_{s >u}\cF^{\tau,t, X}_{s}\right)\cap\cF^{\tau,t, X}_{\tau_n}
\eean
Indeed, since $\cup_{n}\cF^{\tau,t, X}_{\tau_n}=\cF^{\tau,t, X}_{\tau}$, letting $n$ tend to infinity yields the conclusion.


The reason for the introduction of the probability measure $\bbP^{\tau, t}$ and the filtration  $(\cF^{\tau,t,X}_s)_{s \in [0,t]}$ is that $(\bbP^{\tau, t},(\cF^{\tau,t,X}_s)_{s \in [0,t]})$-conditional distribution of $Z$ can be characterised by a {\em Kushner-Stratonovich equation} which is well-defined. Moreover, it gives us $(\bbP,\cG)$-conditional distribution of $Z$.  Indeed, observe that $\bbP^{\tau, t}[\tau>t]=1$ and  for any set $E\in \cG_t$, $\chf_{[\tau>t]} \chf_{E}=\chf_{[\tau>t]}\chf_{F}$ for some set $F \in \cF^{\tau,t,X}_t$ (see Lemma 5.1.1 in \cite{br} and the remarks that follow). Then, it follows from the definition of conditional expectation that
\be \label{e:eq}
\bbE\left[f(Z_t)\chf_{[\tau>t]}|\cG_t\right]=\chf_{[\tau>t]}\bbE^{\tau,t}\left[f(Z_t)\big|\cF^{\tau,t,X}_t\right], \bbP-a.s..
\ee
Thus, it is enough to compute the conditional distribution of $Z$ under $\bbP^{\tau,t}$ with respect to $(\cF^{\tau,t,X}_s)_{s \in [0,t]}$. In order to achieve this goal we will use the characterization of the conditional distributions obtained by Kurtz and Ocone \cite{ko}. We refer the reader to \cite{ko} for all unexplained details and terminology.

Let $\cP$ be the set of probability measures on the Borel sets of $\bbR_+$ topologized by weak convergence.  Given $m \in \cP$ and $m-$integrable $f$ we write $m f:=\int_{\bbR} f(z) m(dz)$.  The next result is direct consequence of Lemma 1.1 and subsequent remarks in \cite{ko}:
\begin{lemma} There is a $\cP$-valued $\cF^{\tau,t,X}$-optional process $\pi^t(\om, dx)$ such that
\[
\pi^t_s f= \bbE^{\tau,t}[f(Z_s)|\cF^{\tau, t, X}_s]
\]
for all bounded measurable $f$. Moreover, $(\pi_s^t)_{s \in [0,t]}$ has a \cadlag version.
\end{lemma}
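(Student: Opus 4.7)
The plan is to invoke directly the measurable selection / optional projection result of Kurtz and Ocone (Lemma 1.1 and the remarks immediately following it in \cite{ko}), which guarantees, for a signal process taking values in a Polish space and observed through a right-continuous filtration, the existence of a $\cP$-valued optional process representing the conditional distribution, and moreover provides a \cadlag modification whenever the underlying martingales arising from the innovation representation are themselves \cadlag (which here they are, since the observation $X$ is continuous).

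The verification of hypotheses proceeds as follows. First, the signal $Z$ takes values in the Polish space $\bbR_+$, and under $\bbP^{\tau,t}$ it is a semimartingale with $\cadlag$ (indeed continuous) paths given by \eqref{e:z4f}; in particular $Z$ is progressively measurable with respect to $\bbH$ and $\bbE^{\tau,t}[|f(Z_s)|] < \infty$ for bounded $f$. Second, the observation filtration $(\cF^{\tau,t,X}_s)_{s \in [0,t]}$ is right-continuous by Lemma \ref{l:rcf} and contains the $\bbP^{\tau,t}$-null sets by construction, so it satisfies the usual conditions. These are precisely the ingredients required by \cite{ko} to produce an $\cF^{\tau,t,X}$-optional, $\cP$-valued process $\pi^t_s(\omega, dz)$ such that, for every bounded measurable $f$, $\pi^t_s f$ is a version of $\bbE^{\tau,t}[f(Z_s) \mid \cF^{\tau,t,X}_s]$.

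For the \cadlag modification, I would argue as in the remarks following Lemma~1.1 in \cite{ko}. Fix a countable measure-determining class of bounded continuous functions $\{f_k\}_{k \geq 1}$ on $\bbR_+$. For each $k$, the process $s \mapsto \bbE^{\tau,t}[f_k(Z_s) \mid \cF^{\tau,t,X}_s]$ is an $\cF^{\tau,t,X}$-semimartingale (it can be written as a conditional expectation of a semimartingale, and the observation filtration satisfies the usual conditions), so it admits a \cadlag version. Taking a common null set outside of which all these \cadlag versions coexist, one obtains a \cadlag $\cP$-valued process which agrees, for each fixed $s$, $\bbP^{\tau,t}$-a.s. with $\pi^t_s$ when tested against each $f_k$; since $\{f_k\}$ is measure-determining, this yields the desired \cadlag version of $\pi^t$.

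The main point to watch is that the existence result really does apply with the observation filtration generated by the process $X$ that itself depends, through \eqref{e:x4f}, on the signal $Z$; but this is exactly the filtering setup considered in \cite{ko}, since one only needs the hypotheses listed above (Polish state space for $Z$, right-continuous observation filtration, integrability of the test functionals), and no independence between signal and observation noise. With these ingredients the claim follows, and the only nontrivial input has already been supplied by Lemma \ref{l:rcf}.
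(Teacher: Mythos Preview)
Your proposal is correct and follows exactly the paper's approach: the paper states that the lemma is a ``direct consequence of Lemma 1.1 and subsequent remarks in \cite{ko}'', and you have simply spelled out the verification of the hypotheses (Polish state space for $Z$, right-continuity of the observation filtration via Lemma \ref{l:rcf}) and the argument for the \cadlag modification. Your treatment is more detailed than the paper's one-line citation, but the substance is identical.
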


Let's recall  the {\em  innovation process}
\[
I_s= X_s - \int_0^s \pi^t_r \kappa_r dr
\]
where $\kappa_r(z):= \frac{q_x (V(r)-r, X_r, z)}{q(V(r)-r, X_r, z)}$. Although it is clear that $I$ depends on $t$, we don't emphasize it in the notation for convenience. Due to Lemma \ref{l:2moment}  $\pi^t_s \kappa_s$ exists for all $s\leq t$.

In order to be able to use the results of \cite{ko} we first need to  establish the Kushner-Stratonovich equation satisfied by $(\pi^t_s)_{s \in [0,t)}$. To this end, let $B(A) $ denote the set of bounded Borel measurable real valued functions on $A$, where $A$ will be alternatively a measurable subset of $\mathbb R_+ ^2$ or a measurable subset of $\mathbb R_+$. Consider  the operator $\cA_0: B([0,t]\times \bbR_+) \mapsto B([0,t]\times \bbR_+)$ defined by
\be \label{e:A0}
\cA_0 \phi (s, x)=\frac{\partial \phi}{\partial s}(s,x) +  \half \sigma^2(s) \frac{\partial^2  \phi}{\partial x^2}(s,x) + \sigma^2(s)  \frac{H_x}{H}(V(t)-V(s),x) \frac{\partial \phi}{\partial x}(s,x),
\ee
with the domain $\mathcal{D}(\cA_0)= C^{\infty}_c([0,t] \times \bbR_+)$, where  $C^{\infty}_c$ is the class of infinitely differentiable functions with compact support.  By Lemma \ref{l:mpz} the martingale problem for $\cA_0$ is well-posed over the time interval $[0, t-\eps]$ for any $\eps>0$. Therefore, it is well-posed on $[0,t)$  and its unique solution is given by $(s, Z_s)_{s\in[0,t)}$ where $Z$ is defined by (\ref{e:z4f}).  Moreover, the Kushner-Stratonovich equation for the conditional distribution of $Z$ is given by the following:
\be \label{ks}
\pi^t_s f = \pi^t_0 f + \int_0^s \pi^t_r( \cA_0 f) dr + \int_0^s \left[ \pi^t_r(\kappa_r f) -\pi^t_r \kappa_r \pi^t_r f \right] dI_r,
\ee
for all $f \in C^{\infty}_c(\bbR_+)$(see Theorem 8.4.3 in \cite{ka} and note that the condition therein is satisfied due to Lemma \ref{l:2moment}). Note that $f$ can be easily made an element of $\mathcal{D}(\cA_0)$ by redefining it as $f \mathbf{n}$ where $\mathbf{n} \in  C^{\infty}_c(\bbR_+)$ is such that  $\mathbf{n}(s)=1$ for all $s \in [0,t)$. Thus, the above expression is rigorous. The following theorem is a corollary to Theorem  4.1 in \cite{ko}.
\begin{theorem} \label{th:munique} Let $m^t$ be an $\cF^{\tau,t,X}$-adapted \cadlag $\cP$-valued process such that
\be \label{ks1}
m^t_s f = \pi^t_0 f + \int_0^s m_r^t( \cA_0 f) dr + \int_0^s \left[ m_r^t(\kappa_r f) -m^t_r \kappa_r m^t_r f \right] dI^m_r,
\ee
for all $f \in C^{\infty}_c(\bbR_+)$, where $I^m_s=X_s-\int_0^s m_r^t\kappa_r\, dr.$ Then, $m^t_s=\pi^t_s$ for all $s<t$, a.s..
\end{theorem}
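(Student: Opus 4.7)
The plan is to recognize this statement as a direct application of the general filtering uniqueness theorem of Kurtz and Ocone (Theorem 4.1 in \cite{ko}), applied to the restricted time interval $[0, t-\eps]$ for each $\eps > 0$, followed by a limiting argument $\eps \downarrow 0$. The three hypotheses of that theorem that must be verified are: (a) well-posedness of the martingale problem associated with the signal generator; (b) right-continuity of the observation filtration; and (c) suitable integrability of the observation drift coefficient $\kappa_r$ against both candidate filters.

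I would first verify each hypothesis on $[0, t-\eps]$. For (a), Lemma \ref{l:mpz} gives well-posedness of the martingale problem for $\cA_0$ on $[0, t-\eps]$ started from $(0,1)$, and identifies its unique solution as the law of $(s, Z_s)_{s \in [0, t-\eps]}$ under $\bbP^{\tau, t}$ (where $Z$ solves (\ref{e:z4f})). For (b), Lemma \ref{l:rcf} furnishes the right-continuity of $(\cF^{\tau,t,X}_s)_{s \in [0,t]}$. For (c), Lemma \ref{l:2moment} supplies the integrability of $\kappa_r$ so that both the innovation $I$ and the filter integrals $\pi^t_r \kappa_r$, $m^t_r\kappa_r$ entering (\ref{ks}) and (\ref{ks1}) are well-defined semimartingale integrands on $[0, t-\eps]$.

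Once these three ingredients are in place, Kurtz and Ocone's uniqueness theorem yields $m^t_s = \pi^t_s$ for all $s \in [0, t-\eps]$, $\bbP^{\tau,t}$-almost surely. Taking a countable sequence $\eps_n \downarrow 0$ and using the \cadlag property of both $m^t$ and $\pi^t$ gives $m^t_s = \pi^t_s$ for every $s < t$, $\bbP^{\tau,t}$-a.s., which is the desired conclusion.

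The main obstacle I anticipate is the singular behaviour of the coefficient $\sigma^2(s)\,H_x/H(V(t)-V(s), \cdot)$ appearing in $\cA_0$ as $s \uparrow t$: the argument $V(t) - V(s)$ tends to $0$ and standard well-posedness arguments break down. This is precisely why Lemma \ref{l:mpz} is formulated only on $[0, t-\eps]$, why the conclusion of the theorem excludes $s = t$, and why the truncation-and-limit strategy is required rather than a direct invocation of \cite{ko} on the closed interval $[0,t]$.
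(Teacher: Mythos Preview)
Your proposal follows the same overall route as the paper---invoke Kurtz--Ocone's uniqueness theorem on each $[0,t-\eps]$ and pass to the limit---but it misses the one substantive point the paper's proof singles out. In the standard Kurtz--Ocone framework the observation drift is a function of the \emph{signal alone}, whereas here $\kappa_r(z)=\tfrac{q_x}{q}(V(r)-r,X_r,z)$ depends on the current observation $X_r$ as well as on time. Consequently Theorem~4.1 of \cite{ko} does not apply verbatim; one must argue that its \emph{proof} still goes through in this extended setting. The hypothesis in \cite{ko} that does the work (their Proposition~2.2) is well-posedness of the \emph{joint} martingale problem for $(X,Z)$, not merely of the signal martingale problem. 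Your item~(a) cites Lemma~\ref{l:mpz}, which handles only the $Z$-component; what is actually needed is Proposition~\ref{p:mpj}, which establishes well-posedness of the full $(X,Z)$ martingale problem on $[0,t-\eps]$. Once that substitution is made (and the non-standard dependence of $\kappa$ on $X$ is explicitly acknowledged), the rest of your outline---right-continuity via Lemma~\ref{l:rcf}, integrability via Lemma~\ref{l:2moment}, and the $\eps\downarrow 0$ limit---matches the paper's reasoning.
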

\begin{proof} Proof follows along the same lines as the proof of Theorem 4.1 in \cite{ko}, even though, differently from \cite{ko}, we allow the drift of $X$ to depend on $s$ and $X_s$, too. This is due to the fact that  \cite{ko} used the assumption that the drift depends only on the signal process, $Z$, in order to ensure that the joint martingale problem $(X,Z)$ is well-posed, i.e. conditions of Proposition 2.2 in \cite{ko} are satisfied.  Note that the relevant martingale problem is well posed in our case  by Proposition \ref{p:mpj}.
 \end{proof}

Now, we can state and prove the following corollary.
\begin{corollary} \label{pdfZ} Let $f \in B(\bbR_+)$. Then,
\[
\pi^t_s f = \int_{\bbR_+} f(z) p(V(t);s,V(s),X_s,z)\, dz,
\]
for $s <t$ where $p$ is as defined in (\ref{e:td-bm}).
\end{corollary}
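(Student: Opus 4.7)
Plan: apply the uniqueness Theorem \ref{th:munique} to the natural candidate
\[
m^t_s(dz) := p(V(t); s, V(s), X_s, z)\, dz.
\]
That $m^t_s$ is a probability measure follows from the Chapman--Kolmogorov identity for the killed Brownian transition kernel,
\[
\int_0^\infty q(V(s)-s, x, z)\, H(V(t)-V(s), z)\, dz \;=\; H(V(t)-s, x);
\]
$\cF^{\tau,t,X}$-adaptedness and \cadlag regularity are immediate from the continuity of $X$, and the initial condition $m^t_0 = \delta_1$ follows from the degeneration of $q(V(s)-s,1,z)\,dz$ to $\delta_1(dz)$ as $s \downarrow 0$ together with $Z_0 = 1$. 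It then remains to verify the Kushner--Stratonovich equation (\ref{ks1}). To this end I would apply It\^o's formula to $F(s, X_s)$, where
\[
F(s,x) := \int_0^\infty f(z)\, p(V(t); s, V(s), x, z)\, dz
\]
for $f \in C^\infty_c(\bbR_+)$, using the decomposition $dX_s = m^t_s \kappa_s\, ds + dI^m_s$ valid in the filtration $\cF^{\tau,t,X}$, and then match the resulting drift and diffusion coefficients to those of (\ref{ks1}).

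For the diffusion coefficient, a direct differentiation of $p$ in $x$ via the product rule gives
\[
\partial_x p(V(t); s, V(s), x, z) \;=\; \left[\kappa_s(z) - \frac{H_x(V(t)-s, x)}{H(V(t)-s, x)}\right] p(V(t); s, V(s), x, z).
\]
Integrating against $f$, and differentiating the Chapman--Kolmogorov identity above in $x$ to obtain $m^t_s \kappa_s = H_x(V(t)-s, X_s)/H(V(t)-s, X_s)$, I get $F_x(s, X_s) = m^t_s(\kappa_s f) - m^t_s \kappa_s \cdot m^t_s f$, which is precisely the coefficient of $dI^m$ in (\ref{ks1}).

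For the drift, factor $F = u/v$, where $v(s,x) := H(V(t)-s, x)$ and $u(s,x) := \int f(z)\, q(V(s)-s, x, z)\, H(V(t)-V(s), z)\, dz$. Since $H_t(t,a) = \frac{1}{2} H_{aa}(t,a)$, the denominator satisfies $v_s + \frac{1}{2} v_{xx} = 0$, so by the quotient rule it suffices to analyse $u_s + \frac{1}{2} u_{xx}$. Differentiating under the integral and invoking both representations of the heat equation for the killed Brownian density, $q_t = \frac{1}{2} q_{xx} = \frac{1}{2} q_{zz}$, the $(\sigma^2(s)-1)$ cross-terms cancel and what remains is
\[
u_s + \tfrac{1}{2} u_{xx} \;=\; \tfrac{1}{2}\sigma^2(s) \int_0^\infty f(z) \bigl[q_{zz}(V(s)-s, x, z)\, H(V(t)-V(s), z) - q(V(s)-s, x, z)\, H_{aa}(V(t)-V(s), z)\bigr] dz.
\]
Two integrations by parts in $z$, with all boundary terms vanishing because $q(\cdot,\cdot,0) = 0$, $H(\cdot, 0) = 0$, and $f$ has compact support, convert this bracket into $f''(z)\, q\, H + 2 f'(z)\, q\, H_a$, so that $u_s + \frac{1}{2} u_{xx} = v \cdot m^t_s(\cA_0 f)$. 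Applying the quotient rule together with $v_x/v = m^t_s \kappa_s$ produces $F_s + \frac{1}{2} F_{xx} + F_x \cdot m^t_s \kappa_s = m^t_s(\cA_0 f)$, exactly the drift in (\ref{ks1}). Theorem \ref{th:munique} then identifies $\pi^t_s = m^t_s$ for every $s < t$. The main obstacle is precisely this drift computation: one must exploit the forward and backward heat-equation representations of $q$ against each other and recognise that the double integration by parts in $z$ is exactly what reconstructs the generator $\cA_0$ of $Z$ acting on $f$.
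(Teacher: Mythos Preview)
Your proposal is correct and follows essentially the same approach as the paper: define the candidate $m^t_s(dz)=p(V(t);s,V(s),X_s,z)\,dz$, verify the Kushner--Stratonovich equation (\ref{ks1}) via It\^o's formula together with the heat equations for $q$ and $H$, and conclude by the uniqueness Theorem \ref{th:munique}. The paper compresses your drift computation into the single PDE $\rho_s+\tfrac{H_x}{H}(V(t)-s,x)\rho_x+\tfrac12\rho_{xx}=-\sigma^2(s)\bigl(\tfrac{H_x}{H}(V(t)-V(s),z)\rho\bigr)_z+\tfrac12\sigma^2(s)\rho_{zz}$ for the density $\rho=p$ and then integrates against $f$, whereas you unpack this via the $u/v$ factorization and explicit integration by parts---but the content is the same.
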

\begin{proof} Let $\rho(t; s,x,z):=p(V(t);s, V(s), x, z)$. Direct computations lead to
\bea \label{pderho-d}
&&\rho_s + \frac{H_x(V(t)-s, x)}{H(V(t)-s, x)}\rho_x+ \frac{1}{2} \rho_{xx}\\
&=& -\sigma^2(s)\left(\frac{H_x(V(t)-V(s),z)}{H(V(t)-V(s), z)} \rho\right)_z + \frac{1}{2} \sigma^2(s) \rho_{zz}. \nn
\eea
Define $m^t \in \cP$ by
$m^t_s f:=\int_{\bbR_+}f(z)\rho(t;s,X_s, z)dz$. Then, using the above pde and Ito's formula one can directly verify that $m^t$ solves (\ref{ks1}). Finally, Theorem \ref{th:munique} gives the statement of the corollary.

\end{proof}

Now, we have all necessary results to prove Proposition \ref{c_density}.\medskip

{{\sc Proof of Proposition \ref{c_density}.}\hspace{3mm}} Note that as
$X$ is continuous, $\cF^{\tau,t,X}_t=\bigvee_{s < t}
\cF^{\tau,t,X}_s$. Fix $r < t$ and let $ E \in \cF^{\tau,t,X}_r$.  We
will show that for any $f \in C^{\infty}_c(\bbR_+)$
\[
\bbE^{\tau,t}[f(Z_t)|\chf_{E}]=\bbE^{\tau,t}\left[\int_{\bbR_+}f(z)
\frac{q(V(t)-t,X_t,z)}{H(V(t)-t,X_t)}\,dz\, \chf_{E}\right].
\]
Since $Z$ is continuous and $f$ is bounded
we have
\[
\bbE^{\tau,t}[f(Z_t)\chf_{E}]=\lim_{s \uparrow
  t} \bbE^{\tau,t}[ f(Z_s)\chf_{E}].
\]
As $s$ will eventually be larger than $r$, $\chf_E \in \cF^{\tau,t,X}_s$ for large enough $s$ and, then,  Corollary \ref{pdfZ} and another application of the Dominated Convergence Theorem will yield
\bean
\lim_{s \uparrow
  t} \bbE^{\tau,t}[ f(Z_s)\chf_{E}]&=&\lim_{s \uparrow
  t} \bbE^{\tau,t}\left[ \int_{\bbR_+}f(z) p(V(t); V(s)-s, X_s, z)\,dz\,\chf_{E}\right]\\
  &=& \bbE^{\tau,t}\left[\lim_{s \uparrow
  t} \int_{\bbR_+}f(z) p(V(t); V(s)-s, X_s, z)\,dz\,\chf_{E}\right].
\eean
Since $X$ is strictly positive until $\tau$ by Proposition \ref{p:up},
$\min_{s \leq t} X_s >0$. This yields that $\frac{1}{H(V(t)-s, X_s)}$
is bounded ($\om$-by-$\om$) for $s \leq t$. Moreover, $q(V(s)-s, X_s,
\cdot)$ is bounded by $\frac{1}{\sqrt{2 \pi (V(s)-s)}}$. Thus, in view of (\ref{e:td-bm}),
\[
p(V(t); V(s)-s, X_s, z) \leq
\frac{K(\om)}{\sqrt{V(s)-s}}H(V(t)-V(s),z),
\]
where $K$ is a constant.
Since $(V(s)-s)^{-1}$ can be bounded when $s$ is away from $0$, $H$ is
bounded by $1$, and $f$ has a compact support, it follows
from the Dominated Convergence Theorem that
\[
\lim_{s \uparrow
  t} \int_{\bbR_+}f(z) p(V(t); V(s)-s, X_s, z)\,dz
=\int_{\bbR_+}f(z)\frac{q(V(t)-t,X_t,z)}{H(V(t)-t,X_t)}\,dz, \,
\bbP^{\tau, t}-\mbox{a.s..}
\]
This in turn shows,
\[
\bbE^{\tau,t}[f(Z_t)\chf_{E}]=\bbE^{\tau,t}[\lim_{s \uparrow
  t} f(Z_s)\chf_{E}]
=\bbE^{\tau,t}\left[\int_{\bbR_+}f(z) \frac{q(V(t)-t,X_t,z)}{H(V(t)-t,X_t)}\,dz\, \chf_{E}\right].
\]
The claim now follows from (\ref{e:eq}). \hfill \qed

\appendix
\section{Auxiliary results and their proofs}
\subsection{Comparison results}
\begin{lemma} \label{l:compz} Suppose that $d:\bbR_+\times \bbR_+^2\mapsto [0, M]$ for some constant $M>0$ is a measurable function and $Y$ is a strong solution to
\[
Y_t=y+ 2 \int_0^t \sqrt{|Y_s|}dB_s +\int_0^t d(s,Y_s,Z_s)ds
\]
for some $y \geq 0$ upto an explosion time $\tau$. Then, $\bbP[\tau=\infty]=1$ and $\bbP[0 \leq Y_t \leq Y^M_t, \forall t]=1$, where
\[
Y^M_t=y+ 2 \int_0^t \sqrt{|Y^M_s|}dB_s +\int_0^t M ds.
\]
\end{lemma}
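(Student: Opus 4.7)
The plan is to prove the three assertions in the order: nonnegativity of $Y$, upper bound by $Y^M$, and absence of explosion, with the intermediate step being the one that requires the most care.

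First I would establish nonnegativity of $Y$ on $[0,\tau)$. Writing Tanaka's formula for $Y_{t\wedge\tau_n}^-$, where $\tau_n$ is the usual localizing sequence for the explosion time, the stochastic integral $\int_0^{t\wedge\tau_n}\mathbf 1_{[Y_s<0]}\,2\sqrt{|Y_s|}\,dB_s$ is a true martingale (it is bounded because we work up to $\tau_n$), and the drift contribution is $-\int_0^{t\wedge\tau_n}\mathbf 1_{[Y_s<0]} d(s,Y_s,Z_s)\,ds\le 0$ since $d\ge 0$. The crucial point is that the local time of $Y$ at $0$ vanishes: this follows from the Yamada--Watanabe argument applied to the diffusion coefficient $2\sqrt{|y|}$ which satisfies $\int_{0+}x^{-1}\,dx=\infty$, i.e.\ the standard estimate in the proof of Proposition~5.2.13 in \cite{ks} gives $L_t^0(Y)=0$. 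Taking expectations yields $\bbE[Y_{t\wedge\tau_n}^-]\le 0$, hence $Y_t\ge 0$ a.s.\ on $[0,\tau)$.

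Next I would derive the upper bound $Y_t\le Y^M_t$ by the same Yamada--Watanabe--Tanaka scheme applied to the difference $D_t:=Y_t-Y^M_t$. The process $Y^M$ is a squared Bessel process of dimension $M$ starting at $y$, so it is globally defined, nonnegative, and non-exploding (see Chap.~XI in \cite{ry}). Stopping at $\tau_n\wedge\sigma_m$ with $\sigma_m:=\inf\{t:Y^M_t\ge m\}$, I would apply Tanaka to $(D_t)^+$:
\[
(D_{t})^+=\int_0^{t}\mathbf 1_{[D_s>0]}\bigl(2\sqrt{Y_s}-2\sqrt{Y_s^M}\bigr)\,dB_s+\int_0^{t}\mathbf 1_{[D_s>0]}\bigl(d(s,Y_s,Z_s)-M\bigr)\,ds+\tfrac12 L_t^0(D).
\]
The drift integrand is $\le 0$ since $d\le M$, the stochastic integral is a martingale up to $\tau_n\wedge\sigma_m$, and $L^0(D)=0$ by the Yamada--Watanabe estimate, because the diffusion coefficient of $D$ is again $1/2$-H\"older with modulus $\rho(x)=\sqrt x$ satisfying $\int_{0+}\rho(x)^{-2}\,dx=\infty$. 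Taking expectations gives $\bbE[(D_{t\wedge\tau_n\wedge\sigma_m})^+]\le 0$, hence $Y_t\le Y^M_t$ up to $\tau_n\wedge\sigma_m$, and letting $n,m\to\infty$ (justified once we know $\tau=\infty$, which we handle next) yields the bound globally.

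Finally, non-explosion of $Y$ follows from the upper bound: since $0\le Y_{t\wedge\tau_n}\le Y^M_{t\wedge\tau_n}$ and $Y^M$ does not explode, $\sup_{s\le t}Y_{s\wedge\tau_n}$ is dominated by $\sup_{s\le t}Y^M_s<\infty$ a.s., so $\tau_n\uparrow\infty$ a.s. To avoid circularity with the previous step, I would first run the comparison argument up to $\tau_n\wedge\sigma_m$, obtain $Y_{t\wedge\tau_n\wedge\sigma_m}\le m$, deduce $\tau_n\ge\sigma_\infty:=\lim_m\sigma_m=\infty$ (hence $\tau_n\uparrow\infty$), and then pass to the limit in the Tanaka inequality to conclude $Y_t\le Y^M_t$ for all $t\ge 0$. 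The main obstacle is the joint handling of the two localizations and the verification that the local-time terms in the Tanaka expansions vanish; both rely on the Yamada--Watanabe $1/2$-H\"older structure of the diffusion coefficient.
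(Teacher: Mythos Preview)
Your proposal is correct and follows essentially the same route as the paper: Tanaka's formula for $Y^-$ and for $(Y-Y^M)^+$, vanishing of the local times at $0$ via the $1/2$-H\"older structure of $\sqrt{|\cdot|}$ (the paper cites Lemma~3.3, Chap.~IX of \cite{ry}, which is the same Yamada--Watanabe estimate you invoke), and then non-explosion from $0\le Y\le Y^M$. The only difference is that you introduce an extra localization $\sigma_m$ for $Y^M$; this is harmless but unnecessary, since on $[Y_s>Y^M_s]\cap[s\le\tau_n]$ one already has $0\le Y^M_s<Y_s\le n$, so the stochastic integrand $\chf_{[Y_s>Y^M_s]}(\sqrt{Y_s}-\sqrt{Y^M_s})$ is bounded by $\sqrt{n}$ without stopping $Y^M$.
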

\begin{proof}

Let $\tau_n:= \inf\{t>0: |Y_t| \geq n\}$. By Tanaka's formula,
\bean
(Y_{t\wedge \tau_n}-Y^M_{t\wedge \tau_n})^+&=&2 \int_0^{t\wedge \tau_n} (\sqrt{|Y_s|}-\sqrt{Y^M_s})\chf_{[Y_s>Y^M_s]}dB_s \\
&&-\int_0^{t\wedge \tau_n} (M-d(s, Y_s, Z_s))\chf_{[Y_s>Y^M_s]}ds + L^0(Y-Y^M)_{t\wedge \tau_n},\\
Y_{t\wedge \tau_n}^-&=&- 2 \int_0^{t\wedge \tau_n} \sqrt{|Y_s|}\chf_{[Y_s<0]}dB_s \\
&&-\int_0^{t\wedge \tau_n} d(s, Y_s, Z_s)\chf_{[Y_s<0]}ds + L^0(Y)_{t\wedge \tau_n}
\eean
where $L^0(Y-Y^M)$ and $L^0(Y)$ are the local times of $Y-Y^M$ and $Y$ at $0$, respectively. We will first show that $Y$ is nonnegative upto $\tau_n$. Since
\[
\int_0^{t\wedge \tau_n} \chf_{[0< -Y_s <1]}\frac{|Y_s|}{|Y_s|}ds \leq t
\]
and  $\int_0^{\infty} \frac{1}{x} dx=\infty$, it follows from Lemma 3.3 in Chap.~IX of \cite{ry} that $L^0(Y_{t\wedge \tau_n})=0$ for all $t \geq 0$. Thus,
\[
\bbE\left[Y_{t\wedge \tau_n}^-\right]=\bbE\left[-2 \int_0^{t\wedge \tau_n} \sqrt{Y_s}\chf_{[Y_s<0]}dB_s-\int_0^{t\wedge \tau_n} d(s, Y_s, Z_s)\chf_{[Y_s<0]}ds\right] \leq 0,
\]
since the stochastic integral is a martingale having a bounded integrand. Thus, $Y_{t\wedge \tau_n} \geq 0$, a.s. for every $t\geq 0$ and any $n$.

Similarly,
\[
\int_0^{t\wedge \tau_n} \chf_{[0< Y_s-Y^M_s <1]}\frac{(\sqrt{|Y_s|}-\sqrt{Y^M_s})^2}{|Y_s-Y^M_s|}ds=\int_0^{t\wedge \tau_n} \chf_{[0< Y_s-Y^M_s <1]}\frac{(\sqrt{Y_s}-\sqrt{Y^M_s})^2}{|Y_s-Y^M_s|}ds \leq t,
\]
where the first equality is due to the fact that $Y^M\geq 0$ implies $Y_s\geq 0$ on the set $[Y_s-Y^M_s>0],$ and the second inequality follows from the elementary fact that $|\sqrt{x}-\sqrt{y}|\leq \sqrt{|x-y|}$. Thus it follows from Lemma 3.3 in Chap.~IX of \cite{ry} that $L^0(Y_{t\wedge \tau_n}-Y^M_{t\wedge \tau_n})=0$ for all $t \geq 0$ and
\bean
\bbE\left[(Y_{t\wedge \tau_n}-Y^M_{t\wedge \tau_n})^+\right]&=&2 \bbE\left[\int_0^{t\wedge \tau_n} (\sqrt{Y_s}-\sqrt{Y^M_s})\chf_{[Y_s>Y^M_s]}dB_s\right]\\
 &&-\bbE\left[\int_0^{t\wedge \tau_n} (M-d(s, Y_s, Z_s))\chf_{[Y_s>Y^M_s]}ds\right] \leq 0,
\eean
since the stochastic integral $(\int_0^{t\wedge \tau_n} (\sqrt{Y_s}-\sqrt{Y^M_s})\chf_{[Y_s>Y^M_s]}dB_s)_{t \geq 0}$ is a martingale having a bounded integrand. Thus, $Y_{t\wedge \tau_n} \leq Y^M_{t\wedge \tau_n}$, a.s. for every $t\geq 0$ and any $n$. Since  $Y$ and $Y^M$ are continuous upto time $\tau_n$, we have
\[
\bbP[0 \leq Y_{t\wedge \tau_n} \leq Y^M_{t\wedge \tau_n}, \forall t\geq 0]=1.
\]
By taking the limit as $n \rar \infty$, we obtain
\[
\bbP[0 \leq Y_{t\wedge \tau} \leq Y^M_{t\wedge \tau}, \forall t\geq 0]=1.
\]
Since $Y^M$ is non-explosive, this implies that $\tau=\infty$, a.s..
\end{proof}

In view of the above lemma, the hypothesis of the next lemma is not vacuous.
\begin{lemma} \label{l:compstop}  Suppose that $d:\bbR_+\times \bbR_+^2\mapsto [0, M]$ for some constant $M>0$ is a measurable function and $Y$ is the nonnegative strong solution to
\[
Y_t=y+ 2 \int_0^t \sqrt{Y_s}dB_s +\int_0^t d(s,Y_s,Z_s)ds,
\]
for some $y \geq 0$. Moreover, suppose that there exists two stopping times $S \leq T$ such that  $d((t\vee S)\wedge T, Y_{(t\vee S)\wedge T},Z_{(t\vee S)\wedge T}) \in [a,b]\subseteq [0,M]$ for some constants $a$ and $b$.  Then, $\bbP[Y^a_{t\wedge T} \leq Y_{t\wedge T} \leq Y^b_{t\wedge T}, \forall t]=1$, where
\bean
Y^a_t&=&Y_{t \wedge S}+ \int_S^{t \vee S}\left\{2 \sqrt{Y^a_s}dB_s + a ds\right\}\\
Y^b_t&=&Y_{t \wedge S}+ \int_S^{t \vee S}\left\{2 \sqrt{Y^b_s}dB_s + b ds\right\}.
\eean
\end{lemma}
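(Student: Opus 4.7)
\medskip

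\noindent\textbf{Proof plan.} The strategy is to mimic the argument used in Lemma \ref{l:compz}, applying Tanaka's formula to $(Y - Y^b)^+$ and $(Y^a - Y)^+$, but now starting from the random time $S$ rather than from $0$. First I would observe that, by the very definition of $Y^a$ and $Y^b$, one has $Y_{t} = Y^a_{t} = Y^b_{t}$ for all $t \leq S$, so the claimed inequalities hold trivially on $\{t \leq S\}$. Thus it suffices to prove the two-sided bound on the stochastic interval $[\![S, T]\!]$.

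I would introduce the localizing sequence $\tau_n := \inf\{t > 0 : Y_t + Y^a_t + Y^b_t \geq n\}$ so that on $[\![0, \tau_n]\!]$ all three processes are bounded, and apply Tanaka's formula between $S$ and $(t \wedge T) \vee S$ to obtain
\[
(Y_{(t\wedge T)\wedge\tau_n} - Y^b_{(t\wedge T)\wedge\tau_n})^+
= 2\!\int_{S\wedge\tau_n}^{(t\wedge T)\wedge\tau_n}\!(\sqrt{Y_s}-\sqrt{Y^b_s})\chf_{[Y_s>Y^b_s]}\,dB_s
+ \int_{S\wedge\tau_n}^{(t\wedge T)\wedge\tau_n}\!(d(s,Y_s,Z_s)-b)\chf_{[Y_s>Y^b_s]}\,ds
+ \Delta L,
\]
where $\Delta L$ is the increment of $L^0(Y - Y^b)$ on the interval, and using the fact that $(Y_S - Y^b_S)^+ = 0$ since the two processes coincide at time $S$.

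The key step is then to argue that $\Delta L = 0$. This is exactly where the Yamada--Watanabe type estimate enters: using the elementary inequality $|\sqrt{x}-\sqrt{y}| \leq \sqrt{|x-y|}$, one has
\[
\int_{S\wedge\tau_n}^{(t\wedge T)\wedge\tau_n}\chf_{[0 < Y_s - Y^b_s < 1]}\frac{(\sqrt{Y_s}-\sqrt{Y^b_s})^2}{|Y_s - Y^b_s|}\,ds \leq t,
\]
so Lemma 3.3 in Chap.~IX of \cite{ry} yields $L^0(Y - Y^b) \equiv 0$, exactly as in the proof of Lemma \ref{l:compz}. The drift contribution is nonpositive because on $[\![S, T]\!]$ the assumption on $d$ gives $d(s,Y_s,Z_s) - b \leq 0$ on $[Y_s > Y^b_s]$. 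The stochastic integral is a true martingale up to $\tau_n$ since the integrand is bounded. Taking expectations gives $\bbE[(Y_{(t\wedge T)\wedge\tau_n} - Y^b_{(t\wedge T)\wedge\tau_n})^+] \leq 0$, hence $Y_{t\wedge T} \leq Y^b_{t\wedge T}$ almost surely on $\{t\wedge T \leq \tau_n\}$. Letting $n \to \infty$ and using the non-explosion of $Y$, $Y^a$ and $Y^b$ (inherited from Lemma \ref{l:compz} and the fact that $Y^a$, $Y^b$ are shifted squared Bessel processes) delivers the upper bound on the whole trajectory.

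The lower bound $Y_{t\wedge T} \geq Y^a_{t\wedge T}$ is obtained by the symmetric argument applied to $(Y^a - Y)^+$, using $d(s, Y_s, Z_s) - a \geq 0$ on $[\![S, T]\!]$ in place of the previous drift sign. Combining continuity in $t$ with the identity on $\{t \leq S\}$ then gives the uniform-in-$t$ statement. The only genuinely delicate point is handling the random initial time $S$: everything works because the initial values at $S$ coincide, so Tanaka's formula can be started there without incurring a boundary contribution, and the local time argument of \cite{ry} is insensitive to such a time shift.
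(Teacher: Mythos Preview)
Your proposal is correct and follows essentially the same route as the paper: Tanaka's formula applied to $(Y-Y^b)^+$ and $(Y^a-Y)^+$ on $[\![S,T]\!]$, the Yamada--Watanabe estimate to kill the local time at $0$, and the sign of the drift on $[\![S,T]\!]$ to conclude. The only minor difference is that you localize with $\tau_n$ to make the stochastic integral a true martingale, whereas the paper instead observes that this stochastic integral, being equal to $(Y-Y^b)^+$ plus a nonnegative drift term, is a \emph{nonnegative} local martingale starting at $0$, hence a supermartingale with nonpositive expectation---this avoids the localization step entirely.
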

\begin{proof}
Observe that using the similar arguments as in the previous lemma, one obtains that $L^0(Y-Y^a)=L^0(Y-Y^b)=0$. Thus, by Tanaka's formula,
\bean
(Y_{t\wedge T }-Y^b_{t\wedge T})^+&=&2 \int_{t \vee S}^{t \wedge T} (\sqrt{Y_s}-\sqrt{Y^b_s})\chf_{[Y_s>Y^b_s]}dB_s \\
&&-\int_{t \vee S}^{t\wedge T} (b-d(s, Y_s, Z_s))\chf_{[Y_s>Y^b_s]}ds \\
(Y^a_{t\wedge T }-Y_{t\wedge T})^+&=&2 \int_{t \vee S}^{t \wedge T} (\sqrt{Y^a_s}-\sqrt{Y_s})\chf_{[Y^a_s>Y_s]}dB_s \\
&&-\int_{t \vee S}^{t\wedge T} (d(s, Y_s, Z_s)-a)\chf_{[Y^a_s>Y_s]}ds.
\eean
 Observe that the stochastic integrals above are nonnegative local martingales, therefore they are supermartingales. Thus, by taking the expectations we obtain
 \bean
 \bbE\left[(Y_{t\wedge T }-Y^b_{t\wedge T})^+\right]&\leq& 0 \\
 \bbE\left[(Y^a_{t\wedge T }-Y_{t\wedge T})^+\right]&\leq& 0.
 \eean
 Hence, the conclusion follows.
\end{proof}
\subsection{Martingale problems and some $L^2$ estimates}
In the next lemma we show that the martingale problem related to $Z$ as defined in (\ref{e:z4f}) is well posed. Recall that $\cA_0$ is the associated infinitesimal generator defined in (\ref{e:A0}). We will denote the restriction of $\cA_0$ to $B([0,t-\eps] \times \bbR_+)$ by $\cA^{\eps}_0$.

\begin{lemma} \label{l:mpz} Fix $\eps>0$ and let  $\mu \in \cP$. Then, the martingale problem $(\cA_0^{\eps}, \mu)$ is well-posed. Moreover, the SDE (\ref{e:z4f}) has a unique weak solution for any nonnegative initial condition and the solution is strictly positive on $(s, t-\eps]$ for any $s \in [0, t-\eps]$.
\end{lemma}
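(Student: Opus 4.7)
The plan is to identify the SDE (\ref{e:z4f}) as the Doob $h$-transform of the deterministically time-changed Brownian motion (\ref{sdeZ}) killed at its first hitting time $\tau$ of $0$, with space-time harmonic function $h(s,x):=H(V(t)-V(s),x)$. The martingale identity (\ref{dist_tau}), which says that $(\chf_{[\tau>s]}h(s,Z_s))_{s\in[0,t]}$ is a $(\bbP,\bbH)$-martingale, makes this identification natural and will drive both well-posedness of $(\cA_0^{\eps},\mu)$ and the strict positivity assertion.

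For existence I would work on the original space carrying $\bbP$ and the process $Z$ from (\ref{sdeZ}) with $Z_0\sim\mu$ (first assuming $\mu((0,\infty))=1$), and define a probability measure $\bbQ$ on $\cH_{t-\eps}$ by
\[
\frac{d\bbQ}{d\bbP|_{\cH_{t-\eps}}} \;=\; \frac{\chf_{[\tau>t-\eps]}\,h(t-\eps,Z_{t-\eps})}{\bbE_{\bbP}[h(0,Z_0)]}.
\]
The denominator is strictly positive since $h(0,\cdot)>0$ on $(0,\infty)$, and Girsanov's theorem applied to the nonnegative martingale $(\chf_{[\tau>s]}h(s,Z_s))_{s\in[0,t-\eps]}$ --- which is strictly positive $\bbQ$-a.s.\ because $\bbQ[\tau>t-\eps]=1$ --- shows that under $\bbQ$ the process $Z$ satisfies (\ref{e:z4f}) on $[0,t-\eps]$ with $\beta^t$ a $\bbQ$-Brownian motion. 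This furnishes a weak solution, hence a solution of the martingale problem $(\cA_0^{\eps},\mu)$. Initial distributions charging $0$ are handled by approximation with $\mu$ replaced by its restriction to $[\delta,\infty)$ and letting $\delta\downarrow 0$, the limit being controlled via the strict positivity established below.

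For uniqueness and strict positivity the central observation is that, after the deterministic time change $u=V(s)$, the SDE has unit diffusion coefficient and a drift smooth on $[0,V(t-\eps)]\times(0,\infty)$; pathwise uniqueness up to the first hitting time of $0$ then follows from a standard localisation on $\{x\geq 1/n\}$ together with Corollary 5.3.23 in \cite{ks}. Strict positivity on $(s,t-\eps]$ is a classical feature of Doob $h$-transforms: since $h(r,0)=0$ for every $r<t$, the transition density (\ref{e:td-bm}) of the $h$-transformed process vanishes at $z=0$, so $\{0\}$ is inaccessible on the open interval before time $t$; equivalently, any weak solution lifts to one on which the positive martingale $\chf_{[\tau>s]}h(s,Z_s)$ remains strictly positive up to $t-\eps$. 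Pathwise uniqueness up to hitting, combined with the impossibility of ever hitting, yields uniqueness of the martingale problem on $[0,t-\eps]$.

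The main obstacle I foresee is the explosive behaviour of the drift at $x=0$, together with the merely measurable regularity of $\sigma$. The explosion is precisely what prevents the process from returning to the boundary once the $h$-transform is in place, but carefully reconciling this singular drift with the possibility of an initial atom at $0$, and verifying convergence of the approximation scheme there, is where most of the technical care will be required.
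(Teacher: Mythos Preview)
Your approach via the Doob $h$-transform of killed Brownian motion is natural and close in spirit to how the paper introduces $\bbP^{\tau,t}$ in Section~\ref{Zdensity}, but the paper's actual proof of the lemma takes a different route. The paper writes the drift as $\sigma^2(s)\{1/Z_s - Z_s\,\eta^t(s,Z_s)\}$ with $\eta^t$ bounded on $[0,t-\eps]$, and uses the time-changed $3$-dimensional Bessel process $\tilde R$ (solving $d\tilde{R}_r=\sigma(r)\,d\tilde{\beta}_r+\sigma^2(r)\tilde{R}_r^{-1}\,dr$) as the reference process for Girsanov, in both directions. Existence comes from starting with $\tilde R$ and adding the bounded drift $-\tilde R\,\eta^t$ by Girsanov; uniqueness comes from taking an arbitrary weak solution $\tilde Z$, first proving $\tilde Z\leq\tilde R$ by a Tanaka-formula comparison, and then using Girsanov (with density driven by the \emph{bounded} integrand $-\tilde{Z}\,\eta^t$, whose Novikov condition follows from the bound $\tilde Z\leq\tilde R$) to identify $\tilde Z$ under the new measure with a Bessel process, which is strictly positive and unique in law. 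This also covers the initial condition $z=0$ directly, without approximation.

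There is a genuine gap in your argument at the step ``any weak solution lifts to one on which the positive martingale $\chf_{[\tau>s]}h(s,Z_s)$ remains strictly positive''. The $h$-transform construction shows that \emph{one particular} weak solution is strictly positive, but not that every weak solution is; and the reverse lift you suggest would require the Girsanov density $1/h(s,\tilde Z_s)$, which blows up precisely at the boundary you are trying to exclude, so the argument is circular. You can repair this by localising: pathwise uniqueness holds for the SDE stopped at $\tau_n:=\inf\{r:\tilde Z_r=1/n\}$ (the drift is Lipschitz there), hence uniqueness in law for the stopped process, so the distribution of $\tau_n$ is the same for every weak solution; since for your $h$-transform solution $\tau_n\uparrow$ past $t-\eps$, the same holds for any solution. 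The paper avoids this localisation because its Girsanov density has a bounded integrand once the comparison $\tilde Z\leq\tilde R$ is in place. A smaller point: your density $d\bbQ/d\bbP=\chf_{[\tau>t-\eps]}\,h(t-\eps,Z_{t-\eps})/\bbE_\bbP[h(0,Z_0)]$ gives $Z_0$ the $h$-biased law $h(0,z)\,\mu(dz)/\bbE_\bbP[h(0,Z_0)]$ under $\bbQ$, not $\mu$; you must divide by $h(0,Z_0)$ pathwise, or (as the paper does) establish well-posedness for each $\delta_z$ and then invoke Theorem~21.10 in \cite{ok} for general $\mu$.
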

\begin{proof} Let $ s \in[0, t-\eps]$ and $z \in \bbR_+$. Then, direct calculations yield
\be \label{e:weakz}
dZ_r= \sigma(r)d\beta_r + \sigma^2(r)\left\{\frac{1}{Z_r}-Z_r \eta^t(r,Z_r)\right\}\, dr,  \mbox{ for } r \in [s, t-\eps],
\ee
with $Z_s=z$, where
\be \label{d:etat}
\eta^{t}(r,y):=\frac{\int_{V(t)-V(r)}^{\infty}\frac{1}{\sqrt{2 \pi u^5}}\exp\left(-\frac{y^2}{2 u}\right)du}{\int_{V(t)-V(r)}^{\infty}\frac{1}{\sqrt{2 \pi u^3}}\exp\left(-\frac{y^2}{2 u}\right)du}, \ee
thus, $\eta^{t}(r,y) \in [0, \frac{1}{V(t)-V(t-\eps)}]$ for any $r \in [0, t-\eps]$ and $y \in \bbR_+$.

First, we show the uniqueness of the solutions to the martingale problem. Suppose there exists a weak solution  taking values in $\bbR_+$ to the SDE above. Thus, there exists $(\tilde{Z}, \tilde{\beta})$ on some filtered probability space $(\tilde{\Om}, \tilde{\cF}, (\tilde{\cF}_r)_{r \in [0,t-\eps]}, \tilde{P})$ such that
\[
d\tilde{Z}_r= \sigma(r)d\tilde{\beta}_r + \sigma^2(r)\left\{\frac{1}{\tilde{Z}_r}-\tilde{Z}_r \eta^t(r,\tilde{Z}_r)\right\}\, dr, \qquad \mbox{for } r \in [s, t-\eps],
\]
with $\tilde{Z}_s=z$. Consider $\tilde{R}$ which solves
\be \label{e:3dB}
d\tilde{R}_r= \sigma(r)d\tilde{\beta}_r + \sigma^2(r)\frac{1}{\tilde{R}_r}dr,
\ee
with $\tilde{R}_s=z$. Note that this equation is the SDE for a time-changed 3-dimensional Bessel process  with a deterministic time change and the initial condition $\tilde{R}_s=z$. Therefore, it has a unique strong solution  which is strictly positive on $(s, t-\eps]$ (see 9.~446 in Chap.~XI of \cite{ry}). Then, from Tanaka's formula (see Theorem 1.2 in Chap.~VI of \cite{ry}), since the local time of $\tilde{R}-\tilde{Z}$ at $0$ is identically $0$ (see Corollary 1.9 in Chap.~VI of \cite{ry}), we have
\[
(\tilde{Z}_t-\tilde{R}_t)^+= \int_0^t \chf_{[\tilde{Z}_r>\tilde{R}_r]} \sigma^2(r)\left\{ \frac{1}{\tilde{Z}_r}-\tilde{Z}_r \eta^t(r,\tilde{Z}_r)-\frac{1}{\tilde{R}_r}\right\}dr \leq 0,
\]
where the last inequality is due to  $\eta^{t} \geq 0$, and $\frac{1}{a} < \frac{1}{b}$ whenever $a >b >0$. Thus, $\tilde{Z}_r \leq \tilde{R}_r$ for $r \in [s ,t-\eps]$.

Define $(L_r)_{r \in [0,t-\eps]}$ by $L_0=1$ and
\[
dL_r=  -L_r \tilde{Z}_r\eta^t(r,\tilde{Z}_r)\, d\tilde{\beta}_r.
\]
If $(L_r)_{r \in [0,t-\eps]}$ is a true martingale, then $Q$ on $\tilde{\cF}_{t-\eps}$ defined by
\[
\frac{dQ}{d\tilde{P}}=L_{t-\eps},
\]
 is a probability measure on $\tilde{\cF}_{t-\eps}$ equivalent to $\tilde{P}$. Then, by Girsanov Theorem (see, e.g.,  Theorem 3.5.1 in \cite{ks}) under $Q$
\[
d\tilde{Z}_r= \sigma(r)d\tilde{\beta}^Q_r + \sigma^2(r)\frac{1}{\tilde{Z}_r}\, dr, \qquad \mbox{for } r \in [s, t-\eps],
\]
with $\tilde{Z}_s=z$, where $\tilde{\beta}^Q$ is a $Q$-Brownian motion. This shows that $(\tilde{Z}, \tilde\beta^Q)$ is a weak solution to (\ref{e:3dB}). As (\ref{e:3dB}) has a unique strong solution which is strictly positive on $(s, t-\eps]$, any weak solution to (\ref{e:z4f}) is strictly positive on $(s, t-\eps]$.  Thus, due to Theorem 6.4.2 in \cite{SV}, the martingale problem for $(\delta_z, \cA^{\eps}_0)$ has a unique solution. Note that although the drift coefficient is not bounded, Theorem 6.4.2 in \cite{SV} is still applicable when $L$ is a martingale.

Thus, it remains to show that $L$ is a true martingale when $\tilde{Z}$ is a positive solution to (\ref{e:weakz}). For some $0 \leq t_{n-1} <
t_n \leq t-\eps$  consider
\be \label{novikovz}
\bbE\left[\exp\left(\frac{1}{2}\int_{t_{n-1}}^{t_n }
  (\tilde{Z}_r\eta^t(r,\tilde{Z}_r))^2 dr \right)\right].
\ee
The  expression in (\ref{novikovz})
is bounded by
\[
\bbE\left[\exp\left(\frac{1}{2}\int_{t_{n-1}}^{t_n}
  \tilde{R}^2_r\left(\frac{1}{V(t)-V(t-\eps)}\right)^2 dr \right)\right]
\leq \bbE\left[\exp\left(\frac{1}{2}(\tilde{R}^{\ast}_r)^2
 \frac{t_{n}-{t_{n-1}}}{(V(t)-V(t-\eps))^2} \right)\right]
\]
where $Y^{\ast}_t:=\sup_{s \leq t}|Y_s|$ for any \cadlag process
$Y$.  Recall that $\tilde{R}$ is only a time-changed
Bessel process where the time change is deterministic and, therefore, $\tilde{R}_r^2$ is
the square of the Euclidian norm at time $V(r)$ of a 3-dimensional standard Brownian
motion, starting at $(z,0,0)$ at time $V(s)$. Thus, by using the same arguments as in Proposition \ref{p:euy}, we get that the above expression is
going to be finite if
\[
E^{z}_{V(s)}\left[\exp\left(\frac{1}{2}(\beta^{\ast}_{V(t-\eps)})^2\frac{t_{n}-{t_{n-1}}}{(V(t)-V(t-\eps))^2} \right)\right] <\infty,
\]
where $\beta$ is a standard Brownian motion and  $E^x_s$ is the
expectation with respect to the law of a standard Brownian  motion
starting at $x$ at time $s$. In view of the reflection principle for standard
Brownian motion (see, e.g. Proposition 3.7 in Chap.~3 of \cite{ry}) the above expectation is going to be finite if
\[
\frac{t_{n}-{t_{n-1}}}{(V(t)-V(t-\eps))^2 }< \frac{1}{V(t-\eps)}.
\]

Clearly, we can find
 a finite sequence of real numbers $0=t_0<t_1<\ldots<t_{n(T)}=T$
 that satisfy above. Now, it follows from Corollary 3.5.14 in \cite{ks} that $L$ is
 a martingale.

 In order to show the existence of a nonnegative solution, consider the solution, $\tilde{R}$, to (\ref{e:3dB}), which is a time-changed 3-dimensional Bessel process, thus, nonnegative. Then, define  $(L^{-1}_r)_{r \in [0,t-\eps]}$ by $L^{-1}_0=1$ and
\[
dL^{-1}_r=  L^{-1}_r \tilde{R}_r\eta^t(r,\tilde{R}_r)\, d\tilde{\beta}_r.
\]
Applying the same estimation to $L^{-1}$ as we did for $L$ yields that $L^{-1}$ is a true martingale. Then, $Q$ on $\tilde{\cF}_{t-\eps}$ defined by
\[
\frac{dQ}{d\tilde{P}}=L^{-1}_{t-\eps},
\]
is a probability measure on $\tilde{\cF}_{t-\eps}$ under which $\tilde{R}$ solves
\[
d\tilde{Z}_r= \sigma(r)d\tilde{\beta}^Q_r + \sigma^2(r)\left\{\frac{1}{\tilde{Z}_r}-\tilde{Z}_r \eta^t(r,\tilde{Z}_r)\right\}\, dr, \qquad \mbox{for } r \in [s, t-\eps],
\]
with $\tilde{Z}_s=z$ and $\tilde{\beta}^Q$ is a $Q$-Brownian motion. This means that the nonnegative process $\tilde{R}$ is a weak solution of (\ref{e:weakz}). Therefore, the martingale problem $(\delta_z, \cA^{\eps}_0)$ has a solution by Proposition 5.4.11 and Corollary 5.4.8 in \cite{ks} since $\sigma$ is locally bounded. Thus, the martingale problem $(\delta_z, \cA^{\eps}_0)$ is well-posed for any $z \in \bbR_+$.

The well-posedness of the martingale problem for $(\mu, \cA^{\eps}_0)$ follows from Theorem 21.10 in \cite{ok} since  $P^z$ is the unique solution of the martingale problem for $(\delta_z, \cA^{\eps}_0)$ for any $z \in \bbR_+$.

\end{proof}

We are now ready to show that the joint martingale problem for $(X,Z)$ defined by the operator $\cA:B([0,t) \times \bbR_+^2) \mapsto B([0,t) \times \bbR_+^2)$ which is given by
\bean
\cA \phi (s, x, z)&=&\frac{\partial \phi}{\partial s}(s,x,z) + \half
\frac{\partial^2  \phi}{\partial x^2}(s,x,z) + \half \sigma^2(s)
\frac{\partial^2  \phi}{\partial z^2}(s,x,z) \nn \\
\label{e:A} &&+  \frac{q_x}{q}(V(t)-V(s),x,z) \frac{\partial \phi}{\partial x}(s,x,z)+ \sigma^2(s)  \frac{H_z}{H}(V(t)-V(s),z) \frac{\partial \phi}{\partial z}(s,x,z),
\eean
with the domain $\cD(\cA)=C^{\infty}_c([0,t)\times \bbR_+^2)$.
\begin{proposition} \label{p:mpj} Let  $\mu \in \cP^2$ where $\cP^2$ is the set of probability measures on the Borel sets of $\bbR_+^2$ topologized by weak convergence. Then, the martingale problem $( \mu, \cA)$ is well-posed.
\end{proposition}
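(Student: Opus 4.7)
The plan is, exactly as in Lemma \ref{l:mpz}, to prove well-posedness of the martingale problem $(\delta_{(x,z)}, \cA)$ for every Dirac initial condition $(x,z) \in \bbR_+^2$, and then conclude for general $\mu \in \cP^2$ by invoking Theorem 21.10 in \cite{ok}.

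For existence, a weak solution has essentially already been exhibited in the paper: under $\bbP^{\tau,t}$ the pair $(X,Z)$, with $X$ the unique strong solution of (\ref{sdeX}) and $Z$ defined by (\ref{sdeZ}), satisfies the coupled SDE system (\ref{e:z4f})--(\ref{e:x4f}) driven by the independent Brownian motions $\beta^t$ and $B$. An application of It\^o's formula to $\phi(s, X_s, Z_s)$ for $\phi \in \cD(\cA)$, together with the fact that $\phi$ has compact support in $[0,t)\times \bbR_+^2$ so that all relevant coefficients are bounded on that support, shows that $\phi(s, X_s, Z_s) - \int_0^s \cA\phi(r, X_r, Z_r)\, dr$ is a true martingale. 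The same construction with arbitrary starting point $(x,z)$ yields a solution of $(\delta_{(x,z)}, \cA)$.

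For uniqueness, the diffusion coefficient of $\cA$ is block diagonal, so by the standard correspondence between martingale problems and SDEs (Proposition 5.4.6 in \cite{ks}) every solution $(X',Z')$ of $(\delta_{(x,z)}, \cA)$ can be represented on a possibly enlarged filtered probability space as a weak solution of
\[
dX'_s = dB'_s + \frac{q_x}{q}(V(s)-s, X'_s, Z'_s)\, ds, \qquad dZ'_s = \sigma(s)\, d\beta'_s + \sigma^2(s)\frac{H_x}{H}(V(t)-V(s), Z'_s)\, ds,
\]
where $(B',\beta')$ are independent Brownian motions. Testing the martingale property against functions depending only on $(s,z)$ shows that the $Z'$-marginal solves the martingale problem for $\cA_0^\eps$ on $[0,t-\eps]$ for every $\eps>0$, so Lemma \ref{l:mpz} pins down the law of $Z'$ on $[0,t)$. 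Given (a realization of) $Z'$, the $X'$-equation is an SDE whose drift $q_x/q$ is decreasing in $x$; Lemma \ref{uniqueness} therefore provides pathwise uniqueness, and Proposition \ref{p:euy} provides weak existence, so by Yamada--Watanabe (Corollary 5.3.23 in \cite{ks}) the conditional law of $X'$ given $Z'$ is uniquely determined. Combining the two steps yields uniqueness in law of the joint process $(X',Z')$.

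The delicate point is the passage from the martingale problem to the SDE representation with independent driving Brownian motions $(B',\beta')$: this is where the block-diagonal structure of the diffusion matrix of $\cA$ is used in an essential way, and it is also the point at which one must enlarge the underlying probability space so that the conditional-law argument given $Z'$ can be set up rigorously. Everything else is a straightforward combination of results already established: weak uniqueness of $Z$ from Lemma \ref{l:mpz}, pathwise uniqueness of the $X$-equation given $Z$ from Lemma \ref{uniqueness}, weak existence from the construction behind Theorem \ref{th:main}, and the Stroock--Varadhan-type extension from Dirac initial conditions to arbitrary $\mu$.
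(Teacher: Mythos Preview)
Your overall strategy --- reduce to Dirac initial conditions, pass to the SDE representation, pin down the law of $Z$ via Lemma \ref{l:mpz}, and then use pathwise uniqueness for the $X$-equation given $Z$ (Lemma \ref{uniqueness}) together with weak existence --- is exactly the route the paper takes. But your execution has two genuine gaps.

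First, you gloss over the boundary of $\bbR_+^2$. You write that ``the same construction with arbitrary starting point $(x,z)$ yields a solution,'' but Proposition \ref{p:euy} and the machinery behind it (Proposition \ref{p:sde2}, Lemma \ref{aux_sde3}) were established only for strictly positive $y>0$; when $x=0$ the drift $q_x/q$ contains a term of order $1/X$, and one needs a separate stopping-time argument to show the solution immediately enters $(0,\infty)$ and then stays there. Likewise, when $z=0$ one must use that $Z$ immediately becomes strictly positive (Lemma \ref{l:mpz}) and reduce to the case already treated. The paper's proof spends most of its effort on precisely these boundary cases (its Cases 2 and 3), and they are not vacuous.

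Second, you invoke Proposition \ref{p:euy} for weak existence of the $X$-equation, but that proposition is proved for $Z$ satisfying (\ref{sdeZ}), not (\ref{e:z4f}). The Girsanov step there relies on the Novikov estimate (\ref{e:bnov1})--(\ref{e:bnov2}), and under (\ref{e:z4f}) $Z$ is no longer a time-changed Brownian motion. The paper handles this in its Case 1 by observing that $Z$ is bounded above by a time-changed 3-dimensional Bessel process (the comparison from the proof of Lemma \ref{l:mpz}), which is enough for the estimate; you do not mention this adaptation. Relatedly, the paper first restricts the operator to $[0,t-\eps]$ to keep the $Z$-drift bounded and only then lets $\eps\downarrow 0$, whereas you work directly on $[0,t)$ without justification.
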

\begin{proof}  Clearly, if $(\mu, \cA^{\eps})$ is well-posed for any $\eps>0$, where $\cA^{\eps}$ is the restriction of $\cA$ to  $B([0,t-\eps], \bbR_+)$, then $(\mu, \cA)$ is  well-posed.  As in the proof of Lemma \ref{l:mpz}, the problem of well-posedness of $(\mu, \cA^{\eps})$ can be reduced to that of $(\delta_{x,z}, \cA^{\eps})$ for any fixed  $(x,z)\in \bbR_+^2$ due to Theorem 21.11 in \cite{ok} and Proposition 1.6 in Chap.~III of \cite{ry}. To this end, in view of Proposition 5.4.11 and Corollary 5.4.8 in \cite{ks}, it suffices to show the existence and the uniqueness of weak solutions to the system of SDEs defined by (\ref{e:z4f}) and (\ref{e:x4f}) with the initial condition that $X_s=x$ and $Z_s=z$ for a fixed $s \in [0, t-\eps]$. We will consider the following three cases to finish the proof.

\begin{itemize}
\item[{\bf Case 1:}]{$ x>0,\, z>0$}. In Lemma \ref{l:mpz} we have proved the existence and the uniqueness of a weak solution to the SDE (\ref{e:z4f}) for any initial condition $Z_s=z$ for $s \in [0, t-\eps]$ and $z \geq 0$. Thus, there exists $(\tilde{Z}, \tilde{\beta})$ on some filtered probability space $(\tilde{\Om}, \tilde{\cF}, (\tilde{\cF}_r)_{r \in [0,t-\eps]}, \tilde{P})$ such that $(\tilde{Z},\tilde{\beta})$ solves the SDE (\ref{e:z4f}) with the initial condition $Z_s=z$. Without loss of generality we can assume that the space $(\tilde{\Om}, \tilde{\cF}, (\tilde{\cF}_r)_{r \in [0,t-\eps]}, \tilde{P})$ supports another Brownian motion, $\tilde{B}$, independent of $\tilde{\beta}$. Then, Proposition \ref{p:euy} yields that there exists a unique strong and strictly positive solution to (\ref{e:x4f}) on $(\tilde{\Om}, \tilde{\cF}, (\tilde{\cF}_r)_{r \in [0,t-\eps]}, \tilde{P})$. Indeed, the proof of Proposition \ref{p:euy} would remain the same as long as the initial condition for $Z$ is strictly positive and one observes that although $Z$ is not a Brownian motion, the finiteness of (\ref{e:bnov1}) still follows from (\ref{e:bnov2}) since $Z$ is strictly positive and bounded from above by a time-changed 3-dimensional Bessel process and the time change is given by $V(t)$. This demonstrates that there exists a weak solution to the system of SDEs. Moreover, the solution is unique in law since $X$ is pathwise uniquely determined by $Z$, which is unique in law.
\item[{\bf Case 2:}]{$ x=0,\, z\geq 0$}. We can use the same arguments as in the previous case once we establish Lemma \ref{aux_sde3} over the time interval $[s, t-\eps]$. Note that we only need to show the strict positivity of the solution as the existence  of a nonnegative weak solution follows along the same lines. Consider the sequence of stopping times $(\tau_n)_{n \geq 1}$
    \[
    \tau_n:=\inf\{r \in [s, t -\eps]: U_r =\frac{1}{n}\},
    \]
    where $\inf\emptyset=t-\eps$. On $(\tau_n, t-\eps]$ the solution exists and is strictly positive as in Case 1 since $Z_{\tau_n}>0$ and $U_{\tau_n}= \frac{1}{n}$ when $\tau_n < t-\eps$. Consider $\tau:=\inf_n \tau_n$. If $\tau=s$, we are done. Suppose $\tau>s$ with some positive probability. Then, on this set $U_t =0$ for $t \leq \tau$. However, this contradicts the fact that $U$ solves (\ref{sdeU}) on $[s, t-\eps]$.
\item[{\bf Case 3:}] {$x>0,\,  z=0$}. As in the previous case it only remains to establish the strict positivity of the solution of (\ref{sdeU}), which exists by the same arguments. Again consider the following sequence of stopping times:
 \[
    \tau_n:=\inf\{r \in [s, t -\eps]: Z_r =\frac{1}{n}\},
    \]
where $\inf\emptyset=t-\eps$. That the weak solution to (\ref{sdeU}) is strictly positive on $(\tau_n, t-\eps]$ follows from Case 1 if $X_{\tau_n}>0$, and from Case 2 if $X_{\tau_n}=0$. Since $\inf_n \tau_n =s$ by Lemma \ref{l:mpz}, we have the strict positivity on $[s, t-\eps]$.
\end{itemize}
\end{proof}

\begin{lemma} \label{l:2moment} Let $(Z,X)$ be the unique strong solutions to (\ref{sdeZ}) and (\ref{sdeX}). Then  they solve the martingale problem on the interval $[0,t)$ defined by (\ref{e:z4f}) and (\ref{e:x4f}) with the initial condition $X_0=Z_0=1$. Moreover, under Assumption \ref{a:sigma} we have
\begin{description}
\item[i)] $\bbE\left[\int_0^{t}\chf_{[\tau>s]}\left(\frac{q_x}{q}(V(s)-s, X_s, Z_s)\right)^2\, ds\right] < \infty.$
\item[ii)] $\bbE^{\tau, t}\left[\int_0^{t}\left(\frac{q_x}{q}(V(s)-s, X_s, Z_s)\right)^2\, ds\right] < \infty.$
\item[iii)] $\bbE^{\tau, t}\left[\int_0^{t-\eps}\sigma^2(s)\left|\frac{H_x}{H}(V(t)-V(s),Z_s)\right|\, ds\right]^2 < \infty,$ for any $\eps >0$.
\end{description}

\end{lemma}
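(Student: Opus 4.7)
The plan is to first deduce the martingale-problem statement from Girsanov's theorem and then establish the three $L^2$-estimates in sequence. The Radon-Nikodym density $L_t=\chf_{[\tau>t]}/\bbP[\tau>t]$ is $\cF^W_t$-measurable, so the $\bbP$-independence of $B$ and $W$ ensures that $B$ remains a Brownian motion under $\bbP^{\tau,t}$. Since $(\chf_{[\tau>s]}H(V(t)-V(s),Z_s))_{s\leq t}$ is the $(\bbP,\bbH)$-martingale corresponding to the event $\{\tau>t\}$, Girsanov applied to (\ref{sdeZ}) yields (\ref{e:z4f}), while (\ref{sdeX}) reduces to (\ref{e:x4f}) on $[0,t]$ because $\chf_{[\tau>s]}=1$ $\bbP^{\tau,t}$-a.s.\ and the $B$-part of the decomposition is preserved. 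An application of It\^o's formula to $\phi\in C_c^\infty([0,t)\times\bbR_+^2)$ then gives the martingale problem.

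For the estimates, the starting point is the pointwise bound, valid on $\{\tau>s\}$,
\[
\left|\frac{q_x}{q}(V(s)-s,X_s,Z_s)\right|^2 \leq 2\,\frac{(X_s-Z_s)^2}{(V(s)-s)^2}+\frac{2}{X_s^2},
\]
obtained from (\ref{qx/q}) together with the elementary inequality $e^u-1\geq u$, which gives $f(V(s)-s,X_s,Z_s)\leq 1/X_s$. The first term is easy to handle: the squaring argument of Proposition \ref{p:euy} gives $\bbE[X_s^2]\leq 1+3s$ and $\bbE[Z_s^2]=1+V(s)$, so $\sup_{s\leq t}\bbE[(X_s-Z_s)^2]<\infty$; combined with Assumption \ref{a:sigma}.2 and continuity of $(V(s)-s)^{-1}$ on $[\eps,t]$, this yields a finite contribution to (i). For (ii) the same moment bounds transfer to $\bbP^{\tau,t}$: $X$ satisfies the same SDE, and (\ref{e:tdz}) gives $\bbP^{\tau,t}[Z_s\in dz]\leq q(V(s),1,z)/H(V(t),1)\,dz$, hence $\sup_{s\leq t}\bbE^{\tau,t}[Z_s^2]<\infty$.

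The main obstacle is bounding $\bbE[\int_0^t\chf_{[\tau>s]}/X_s^2\,ds]$ and its $\bbP^{\tau,t}$ counterpart. The plan is to invoke Propositions \ref{p:compare} and \ref{p:tcb} to write $X_s\geq R_s=\lambda_s\rho_{\Lambda_s}$ a.s.\ on $[0,\tau]$, with $\rho$ a $3$-dimensional Bessel process starting at $1$, and then perform the change of variable
\[
\int_0^t\frac{ds}{R_s^2}=\int_0^{\Lambda_t}\frac{du}{\rho_u^2}.
\]
Assumption \ref{a:sigma}.2 together with Cauchy-Schwarz imply $\int_0^t(V(s)-s)^{-1}ds<\infty$, so $\lambda_t>0$ and $\Lambda_t\leq t/\lambda_t^2<\infty$. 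A direct calculation from the explicit $3$-dimensional Bessel density shows that $u\mapsto\bbE^1[1/\rho_u^2]$ is finite (the $\sinh$-structure of the density makes the $1/r$ singularity at $r=0$ integrable) and continuous on $[0,\Lambda_t]$ with value $1$ at $u=0$, hence bounded. Taking expectations gives the required finiteness under $\bbP$; since $R$ is driven by $B$, which is still Brownian under $\bbP^{\tau,t}$, the identical bound holds under $\bbP^{\tau,t}$, giving (ii) (using $\tau>t$ $\bbP^{\tau,t}$-a.s.\ to extend the comparison to all of $[0,t]$).

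Part (iii) follows quickly by recognising the integrand as the finite-variation part of (\ref{e:z4f}): since $H_x/H\geq 0$, the process
\[
A_s:=\int_0^s\sigma^2(r)\frac{H_x}{H}(V(t)-V(r),Z_r)\,dr=Z_s-1-\int_0^s\sigma(r)\,d\beta^t_r
\]
is nondecreasing, so the absolute value in (iii) can be dropped. It\^o's isometry and $(a+b+c)^2\leq 3(a^2+b^2+c^2)$ then yield
\[
\bbE^{\tau,t}[A_{t-\eps}^2]\leq 3\bigl(\bbE^{\tau,t}[Z_{t-\eps}^2]+1+V(t-\eps)\bigr),
\]
which is finite by the $\bbP^{\tau,t}$-moment bound on $Z$ established in the second paragraph.
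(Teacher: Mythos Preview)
Your overall strategy is sound and in several places cleaner than the paper's, but there is one genuine gap: the claim ``the squaring argument of Proposition \ref{p:euy} gives $\bbE[X_s^2]\leq 1+3s$'' is not justified. The comparison $Y^2\leq R$ in that proposition is carried out for the solution $Y$ of (\ref{aux_sde2}), whose drift is the bounded function $f$; for $X$ solving (\ref{aux_sde}) the drift of $X^2$ picks up the extra term $2X_s(Z_s-X_s)/(V(s)-s)$, which is not bounded above by a constant when $Z_s>X_s$. The paper handles this by writing out $d(\chf_{[\tau>t]}X_t^2)$, using $2X_s(Z_s-X_s)\leq Z_s^2-X_s^2$ to obtain
\[
\bbE[\chf_{[\tau>t]}X_t^2]\leq 2+3t+(V(t)+1)\int_0^t\frac{ds}{V(s)-s}+\int_0^t\bbE[\chf_{[\tau>s]}X_s^2]\,ds,
\]
and closing with Gronwall. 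You need this (or an equivalent) step; once you have $\sup_{s\leq t}\bbE[\chf_{[\tau>s]}X_s^2]<\infty$ your argument for the first term goes through.

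On the comparison with the paper: your handling of $\int_0^t\chf_{[\tau>s]}X_s^{-2}\,ds$ via the time change $\int_0^t R_s^{-2}\,ds=\int_0^{\Lambda_t}\rho_u^{-2}\,du$ and the boundedness of $u\mapsto\bbE^1[\rho_u^{-2}]$ on $[0,\Lambda_t]$ is more transparent than the paper's splitting at $R_s=\Lambda_s^{1/3}$ combined with a mean-value estimate on $q_y$. Similarly, for (iii) your observation that $H_x\geq 0$ (immediate from $H(t,a)=2\Phi(a/\sqrt t)-1$) lets you drop the absolute value and read off $A_{t-\eps}=Z_{t-\eps}-1-\int_0^{t-\eps}\sigma\,d\beta^t$ directly from (\ref{e:z4f}); the paper instead decomposes $H_x/H=1/z-z\eta^t$ into two signed pieces, time-changes to $S=Z_{V^{-1}}$, and then uses the same SDE trick only on the $1/S$ part. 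Conversely, the paper's route to (ii) is shorter than yours: since $\bbP[\tau>t]\,\bbE^{\tau,t}[\cdot]=\bbE[\chf_{[\tau>t]}\cdot]\leq\bbE[\chf_{[\tau>s]}\cdot]$ for $s\leq t$, (ii) follows from (i) immediately, with no need to rerun the moment bounds under $\bbP^{\tau,t}$.
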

\begin{proof} Recall that $\frac{d\bbP^{\tau, t}}{d\bbP_t}=\frac{\chf_{[\tau>t]}}{\bbP[\tau>t]}$ and that $\mathbb E^{\tau,t}$ denotes the expectation operator with respect to $\bbP^{\tau,t}$. Hence, under $\bbP^{\tau, t}$, $(Z,X)$ satisfy (\ref{e:z4f}) and (\ref{e:x4f}) with the initial condition $X_0=Z_0=1$, which implies that they solve the corresponding martingale problem.
\begin{description}
\item[i) \& ii)] Note that
\bean
&& \bbP[\tau>t]\, \bbE^{\tau,
  t}\left[\int_0^{t}\left(\frac{q_x}{q}(V(s)-s, X_s, Z_s)\right)^2\,
  ds\right]\\
&=&\bbE\left[\chf_{[\tau>t]}\int_0^{t}\left(\frac{q_x}{q}(V(s)-s, X_s, Z_s)\right)^2\, ds\right] \\
&\leq& \bbE\left[\int_0^{t}\chf_{[\tau>s]}\left(\frac{q_x}{q}(V(s)-s, X_s, Z_s)\right)^2\, ds\right].
\eean
Thus, it suffices to prove the first assertion since $\bbP[\tau>t]>0$ for all $t \geq 0$. Recall from (\ref{qx/q}) that
\[
\frac{q_x(t,x,z)}{q(t,x,z)}=\frac{z-x}{t} +
\frac{\exp\left(-\frac{2xz}{t}\right)}{1-\exp\left(-\frac{2xz}{t}\right)}\frac{2z}{t}=\frac{z-x}{t}+f\left(\frac{2xz}{t}\right)\frac{1}{x},
\]
where $f(y)=\frac{e^{-y}}{1-e^{-y}}y$ is bounded by $1$ on $[0,\infty)$.  As $\int_0^{t}\frac{1}{(V(s)-s)^2}ds<\infty$ and $\sup_{s \in [0, t]} \bbE[Z_s^2]\leq  V(t) +1$, the result will follow once we obtain
\begin{enumerate}
\item $\sup_{s \in [0, t]} \bbE[X_s^2\chf_{[\tau>s]}] <\infty$, and
\item $\bbE\left(\int_0^{t}\chf_{[\tau>s]}\frac{1}{X_s^2}ds\right) < \infty$,
\end{enumerate}
demonstrated below.
\begin{enumerate}
\item By Ito formula,
\be \label{e:xt2I}
\chf_{[\tau>t]}X_t^2=\chf_{[\tau>t]}\left(1+ 2 \int_0^t X_s dB_s +2 \int_0^t\left\{\frac{Z_s X_s-X_s^2}{V(s)-s} + f\left(\frac{2 Z_s X_s}{V(s)-s}\right) +\half \right\}\,ds\right).
\ee

Observe that the elementary inequality $2 a b \leq a^2 +b^2$ implies
\bean
2 \chf_{[\tau>t]} \int_0^t X_s dB_s & \leq & 1 +  \left(\chf_{[\tau>t]}\int_0^t X_s dB_s\right)^2 \leq 1 + \left(\int_0^{\tau \wedge t}X_s dB_s\right)^2, \mbox{ and} \\
2 \int_0^t\frac{Z_s X_s-X_s^2}{V(s)-s}\, ds &\leq & \int_0^t \frac{Z_s^2-X_s^2}{V(s)-s}\, ds \leq \int_0^t \frac{Z_s^2}{V(s)-s}\, ds.
\eean
As $f$ is bounded by $1$, using the above inequalities and taking expectations of both sides of (\ref{e:xt2I}) yield
\bean
\bbE[\chf_{[\tau>t]}X_t^2] &\leq& 2 + \bbE\left(\int_0^t \chf_{[\tau > s]}X_s dB_s\right)^2+ \int_0^t \frac{\bbE[Z^2_s]}{V(s)-s}ds + 3t\\
&\leq& 2 + 3t + (V(t)+1)\int_0^t\frac{1}{V(s)-s}\,ds + \int_0^t \bbE\left(\chf_{[\tau>s]}X_s^2\right)ds.
\eean
The last inequality obviously holds when $\int_0^t \bbE\left(\chf_{[\tau>s]}X_s^2\right)ds=\infty$, otherwise, it is a consequence of Ito isometry. Let $T>0$ be a constant, then for all $t \in [0,T]$ it follows from Gronwall's inequality that
\[
\bbE[\chf_{[\tau>t]}X_t^2] \leq \left( 2 + 3T + (V(T)+1)\int_0^T\frac{1}{V(s)-s}\,ds\right)e^T.
\]
\item In view of Proposition \ref{p:compare} we have $\chf_{[\tau>s]}\frac{1}{X_s^2} \leq \frac{1}{R_s^2}$ where $R$ is the unique strong solution of (\ref{e:tcb}). Thus, it is enough to show that
    $\int_0^t \bbE\left[\frac{1}{R_s^2} \right] ds <\infty $. Recall from Proposition \ref{p:tcb} that the law of $R_s$ is that of $\lambda_s \rho_{\Lambda_s}$ where $\rho$ is a 3-dimensional Bessel process starting at 1 and
\bean
\lambda_t&=&\exp\left(-\int_0^t \frac{1}{V(s)-s}\, ds\right), \\
\Lambda_t&=&\int_0^t \frac{1}{\lambda^2_s}\,ds.
\eean
Therefore, using the explicit form of the probability density of 3-dimensional Bessel process (see Proposition 3.1 in Chap.~VI of \cite{ry}) one has
\bean
\int_0^t \bbE\left[\frac{1}{R_s^2}\right] ds &\leq& \int_0^t \bbE\left[\frac{1}{R_s^2} \chf_{[R_s \leq \sqrt[3]{\Lambda_s}]}+ \Lambda_s^{-\frac{2}{3}}\right]ds \\
&\leq&\int_0^t \lambda^{-2}_s \int_0^{\sqrt[3]{\Lambda_s}\lambda^{-1}_s}\frac{1}{y}q(\Lambda_s,1,y)\,dy\, ds + 3 \sqrt[3]{\Lambda_t} \\
&=&\int_0^t \lambda^{-2}_s \int_0^{\sqrt[3]{\Lambda_s}\lambda^{-1}_s}q_y(\Lambda_s,1,y^{\ast})\,dy\, ds + 3 \sqrt[3]{\Lambda_t}
\eean
where the last equality is due to the Mean Value Theorem and $y^{\ast} \in [0,y]$. It follows from direct computations that $|q_y(t,1,y)| \leq \sqrt{\frac{2}{\pi e}}\frac{1}{t}$ for all $y \in \bbR$ and $t \in \bbR_+$. Therefore, we have
\bean
\int_0^t \bbE\left[\frac{1}{R_s^2}\right] ds &\leq& \sqrt{\frac{2}{\pi e}} \int_0^t \lambda^{-2}_s \int_0^{\sqrt[3]{\Lambda_s}\lambda^{-1}_s}\frac{1}{\Lambda_s}\,dy\, ds+ 3 \sqrt[3]{\Lambda_t}\\
&=&\sqrt{\frac{2}{\pi e}} \int_0^t \lambda^{-3}_s \Lambda^{-\frac{2}{3}}_s\,ds + 3 \sqrt[3]{\Lambda_t}\\
&\leq&3 \left(\sqrt{\frac{2}{\pi e}} \lambda^{-1}_t +1\right)\sqrt[3]{\Lambda_t} .
\eean
\end{enumerate}
\item[iii)] Recall that
\[
\frac{H_x}{H}(V(t)-V(s), Z_s)= \frac{1}{Z_s}- Z_s \eta^t(s,Z_s),
\]
where $\eta^t$ is as defined in (\ref{d:etat}). Fix an $\eps >0$. Then,
 \[
 \int_0^{t-\eps} \sigma^2(s)\left|\frac{H_x}{H}(V(t)-V(s), Z_s)\right|ds= \int_0^{V(t-\eps)}\left|\frac{H_x}{H}(V(t)-s, Z_{V^{-1}(s)})\right|ds. \]
Consider the process $S_r:=Z_{V^{-1}(r)}$ for $r \in [0, V(t))$. Then,
\be \label{e:tch}  \begin{split}
& \bbE^{\tau,t}\left[\int_0^{t-\eps}
  \sigma^2(s)\left|\frac{H_x}{H}(V(t)-V(s), Z_s)\right|ds\right]^2 \\
&=\bbE^{\tau,t}\left[\int_0^{V(t-\eps)}\left|\frac{1}{S_s}-S_s
    \eta^t(V^{-1}(s), S_s)\right|ds\right]^2  \\
&\leq 2
\left(\bbE^{\tau,t}\left[\int_0^{V(t-\eps)}\frac{1}{S_s}ds\right]^2 +
  \frac{V(t-\eps)}{(V(t)-V(t-\eps))^2}
  \int_0^{V(t-\eps)}\bbE^{\tau,t}[S^2_s]ds\right).
\end{split}
\ee
Moreover, under $\bbP^{\tau,t}$
\[
dS^2_s=(3 -2 S_s^2 \eta^t(V^{-1}(s), S_s))ds + 2 S_s dW^t_s
\]
for all $s < V(t)$ for the Brownian motion $W^t$ defined by $W^t_s:=\int_0^{V^{-1}(s)}\sigma^2(r) d\beta^t_r.$ Thus,
\[
\bbE^{\tau,t}[S^2_s] \leq 3 s + 1+ \int_0^s \bbE^{\tau,t}[S^2_r] dr.
\]
Hence, by Gronwall's inequality, we have $\bbE^{\tau,t}[S^2_s] \leq (3 s +1)e^{s}$. In view of (\ref{e:tch}) to demonstrate iii) it suffices to show that
\[
\bbE^{\tau,t}\left[\int_0^{V(t-\eps)}\frac{1}{S_s}ds\right]^2 < \infty.
\]
However,
\[
\left(\int_0^{V(t-\eps)}\frac{1}{S_s}ds\right)^2 = \left(S_{V(t-\eps)} -S_0 - W^t_{V(t-\eps)} + \int_0^{V(t-\eps)} \eta^t(V^{-1}(s), S_s) S_s ds\right)^2,
\]
which obviously has a finite expectation due to earlier results.
\end{description}
\end{proof}
\end{document}